\documentclass[english]{article}

\usepackage{graphicx} % Required for inserting images
\usepackage[utf8]{inputenc} % Encodage des caractères
\usepackage[english]{babel} % Langue du document
\usepackage{amsmath, amssymb, amsthm} % Packages AMS pour les mathématiques
\usepackage{geometry}       % Pour régler les marges
\usepackage[colorlinks=true, allcolors=blue]{hyperref}    % Pour les liens hypertextes et les références
\usepackage{tocloft}        % Pour personnaliser le sommaire
\usepackage{braket}
\usepackage{cleveref}
\usepackage{tikz}
\usepackage{nicematrix}
\usepackage{bbm}
\usepackage{enumitem}
\usepackage{float}
\usepackage{microtype}

\usepackage[round]{natbib}
\usepackage[textsize=small]{todonotes}

\newcommand{\R}{\mathbb{R}}
\newcommand{\N}{\mathbb{N}}
\newcommand{\E}{\mathbb{E}}
\renewcommand{\P}{\mathbb{P}}

\usepackage{authblk}
\numberwithin{equation}{section}

\newtheorem{theorem}{Theorem}[section]
\newtheorem{lemma}[theorem]{Lemma}
\newtheorem{corollary}[theorem]{Corollary}

\newtheorem{proposition}[theorem]{Proposition}
\newtheorem{example}[theorem]{Example}

\newtheorem{assumption}{Assumption}[section]

\title{\bfseries
	Fake stationary rough Heston volatility: Microstructure-inspired foundations\footnote{Emmanuel Gnabeyeu and Mathieu Rosenbaum gratefully acknowledge support from the ILB Chair \textit{Artificial Intelligence and Quantitative Methods for Finance} at University Paris Dauphine-PSL.}}

\author[1]{Emmanuel Gnabeyeu\footnote{E-mail: {\tt emmanuel.gnabeyeu\_mbiada@sorbonne-universite.fr} }}
\author[2]{Gilles Pag\`es\footnote{E-mail: {\tt gilles.pages@sorbonne-universite.fr} }}
\author[3]{Mathieu Rosenbaum\footnote{E-mail: {\tt mathieu.rosenbaum@dauphine.psl.eu} }} 
\date{\today}
\affil[1,2]{Sorbonne Universit\'e and Universit\'e Paris Cit\'e, LPSM}
\affil[3]{Universit\'e Paris Dauphine-PSL, Ceremade}

\begin{document}
	
	\maketitle
	
\begin{abstract}
  % This paper investigates the asymptotic behavior and establishes scaling limit theorems for suitably time-modulated, renormalized, heavy-tailed, and nearly unstable Hawkes processes. 	
  This paper investigates the asymptotic behavior %and establishes scaling limit theorems for 
  of suitably time-modulated Hawkes processes with heavy-tailed kernels in a nearly unstable regime.
	We show that, under appropriate scaling, both the intensity processes and the rescaled Hawkes processes converge to a mean-reverting, time-inhomogeneous rough fractional square-root process and its integrated counterpart, respectively.
	In particular, when the original Hawkes process has a stationary first moment (constant marginal mean), the limiting process takes the form of a time-inhomogeneous rough fractional Cox–Ingersoll–Ross (CIR) equation with a constant mean-reversion parameter and a time-dependent diffusion coefficient. This class of equations is particularly appealing from a practical perspective, especially for the so-called \textit{fake stationary rough Heston} model. We further investigate the properties of such limiting scaled time-inhomogeneous Volterra equations, including moment bounds, path regularity and maximal inequality in the \(L^p\) setting for every \(p>0\). 
\end{abstract}

\textbf{\noindent {Keywords:}} Hawkes Processes, Scaling Limits, Limit Theorems, Skorokhod Topology, Fractional  Stochastic Differential Equations, Fourier-Laplace Transforms, Mittag-Leffler Functions,  Regularity.

\medskip
\noindent\textbf{Mathematics Subject Classification (2020):} \textit{ 33E12, 45D05,60G17, 60G22,60G55, 91G80}

\section{Introduction}
Rough volatility models, where the volatility behave like fractional Brownian motion-driven SDEs with Hurst coefficient \(H\approx 0.1\), have attracted considerable attention in mathematical finance over the past decade, as they provide a consistent description of financial market dynamics across asset classes %\citep{bennedsen2022} 
and time scales, 
%ranging from high-frequency order book data to the pricing of derivative securities. 
with many applications such as volatility forecasting and the pricing of derivatives.
% Despite their inherently non-Markovian nature, these models have proven to be remarkably effective in capturing empirical features of volatility, most notably its rough behavior.
Among these models, the rough Heston model introduced in \cite{ElEuchR2018,el2019characteristic} as a natural extension  and rough version of the celebrated %classical
 \cite{Heston1993} model, plays a central role. It is particularly appealing as it admits a clear and convincing interpretation rooted in market microstructure.  More precisely, the rough Heston dynamics arise as the continuous-time scaling limit of suitably well-chosen Hawkes processes modeling high-frequency flows \citep{JaissonRosenbaum2015,JaissonRosenbaum2016,ElEuchFukasawaRosenbaum2018,el2019characteristic}. In this framework, the rough behavior of volatility emerge endogenously from the self-exciting nature of order arrivals. Moreover,
 the model admits semi-closed-form representations of its characteristic function \citep{el2019characteristic,abi2019affine}, which enable efficient numerical methods \citep{CallegaroGrasselliPages2020}, making the model suitable for practical applications such as option pricing and volatility fitting, while simultaneously providing a flexible framework for studying the joint statistical properties of prices and volatility, a key objective in financial econometrics.
 
\medskip
\noindent In this setting, the stock price process \( S \) evolves according to the stochastic differential equation ( SDE)
\begin{equation}\label{E:HestonlogS}
	\frac{dS_t}{S_t} = \sqrt{V_t}\,\big(\sqrt{1-\rho^2}\,dW^{(2)}_t + \rho\,dW^{(1)}_t\big), \quad \rho \in (-1,1), \quad
	\quad S_0 \in (0,\infty),
\end{equation}
where  \( (W^{(1)},W^{(2)}) \) is a two-dimensional independent standard Brownian motion and 
the variance process \( V \),
follows a fractional mean-reverting Cox--Ingersoll--Ross (CIR) dynamics,
\begin{equation}\label{E:HestonV}
	V_t = V_0 + \int_0^t K_{\alpha}(t-s)
	\left( (\theta(s) - \lambda V_s)\,ds + \nu \sqrt{V_s}\,dW^{(1)}_s \right),
	\qquad V_0 \perp\!\!\!\perp W^{(1)}.
\end{equation}
Here, $K_{\alpha}$ is a {nonnegative} locally square-integrable kernel i.e. $K_{\alpha}$ lies in $L^2_{\rm loc}(\mathbb{R}_+,\mathbb{R}_+)$, \( \theta \) a deterministic mean-reversion level and $\lambda, \nu \in \R_+$.
%Processes of this type appear in a wide range of applications. In particular, they not only arise as limits of suitably rescaled %non-Markovian 
%self-exciting Hawkes processes in financial mathematics \citep{ElEuchFukasawaRosenbaum2018, HorstXuZhang2024}, but also as continuous-time scaling limits of branching processes in population genetics \citep{dawson1994super, mytnik2015uniqueness}.
Processes of this type appear in a wide range of applications. In particular, they arise not only as limits of suitably rescaled self-exciting Hawkes processes in financial mathematics \citep{ElEuchFukasawaRosenbaum2018, HorstXuZhang2024},  but also as continuous-time scaling limits of branching processes in population genetics \citep{dawson1994super, mytnik2015uniqueness}.

\medskip
\noindent In financial applications, modeling volatility and pricing derivatives across multiple time horizons raises fundamental consistency challenges. This motivates the study of appropriate stationarity properties for fractional CIR-type equations, with the aim of providing a unified and consistent framework that captures both short- and long-maturity behaviors and enables robust calibration across the entire term structure.
%The mathematical properties that govern these processes~\eqref{E:HestonV} have profound implications in finance and become a major challenge when modeling volatility across multiple time horizons. As a result, the search for some suitable stationarity properties within fractional CIR-type equations can be of interest, with the aim of providing a unified and consistent framework for capturing both short- and long-maturity behaviors, and enabling robust calibration across the entire term structure.
In fact, a well-known drawback of the classical \cite{Heston1993} model is that it typically exhibits two distinct regimes: a short-maturity regime, where the initial condition (often chosen as the long-run mean) dominates and the variance remains close to zero; and a long-term regime, which may correspond to the stationary distribution of the process. This issue becomes even more intricate in the rough Heston model \citep{el2019characteristic}, and more generally, in affine Volterra models \cite*[Section~7]{abi2019affine}, since Volterra equations are \emph{inherently non-stationary}, as emphasized in the recent works \citep{Pages2024, EGnabeyeu2025}.
To overcome this limitation and provide a more consistent modeling framework, \cite{EGnabeyeu2025} introduced a notion of stabilization for Volterra equations with affine drift and suggested the possibility of stabilized versions of Volterra models, where the dynamics driving the asset’s volatility exhibit constant mean and variance over time. 
Their approach relies on a time-modulated diffusion coefficient designed to solve a suitable functional equation, ensuring that all marginal distributions of the variance process share the same mean and variance, despite the non-stationarity of the underlying Volterra dynamics. This provides examples of \emph{fake stationary Volterra processes} in the sense of \cite{Pages2024}. 

\medskip
\noindent In the rough Heston setting, this would result in the so-called \emph{fake stationary rough Heston model}, in which the variance process \(V\) is driven by  a stabilized fractional Cox–Ingersoll–Ross dynamics:
\begin{equation}\label{eq:Volterra}
	V_t = V_0\phi(t) + \int_0^t K_{\alpha}(t-s) \left( (\theta(s) - \lambda V_s)\,ds + \nu\, \varsigma(s) \sqrt{V_s}\, dW^{(1)}_s \right), \; \varsigma = \varsigma_{\alpha,\lambda,c},\; c > 0,\; V_0 \perp\!\!\!\perp W^{(1)}.
\end{equation}
Following the approach of \cite{EGnabeyeu2025}, the time-dependent function \(\varsigma\), appearing in the diffusion coefficient, should be the solution of the below functional equation~\eqref{eq:funcEq}, which is known to be unique and ensures that all marginal distributions of the process \(V\) have constant first and second-order moments over time:
\begin{equation}\label{eq:funcEq}
	\forall\, t\ge 0, \quad c \lambda^2\big(1-(\phi(t)-(f_{\alpha,\lambda} * \phi)_t)^2 \big) =  (f_{\alpha,\lambda}^2 * \varsigma_{\alpha,\lambda,c}^2)(t), \;\; \phi(t)   =1 - \lambda \int_0^t K_{\alpha}(t-s) \left( \frac{\theta(s)}{\mu_\infty} - 1 \right) \, ds.
\end{equation}
In this equation, \(\mu_\infty = \lim_{t \to \infty} \theta(t)\), and \(f_{\alpha,\lambda} := -R_{\alpha,\lambda}^\prime\), where \(R_{\alpha,\lambda}\) is the \(\lambda\)-resolvent of the kernel \(K_{\alpha}\), defined in~\eqref{eq:Resolvent}.
 As a result, for every \(t \ge 0\), 
\( \mathbb{E}[V_t] = \frac{\mu_\infty}{\lambda}\) and \( \text{Var}[V_t] = c\nu^2 \frac{\mu_\infty}{\lambda}\) (see e.g. \cite{EGnabeyeuPR2025}). 
%Such new model belongs to the class of diffusion-based models used in the industry to encode implied-volatility information, alongside non-parametric grids of implied volatilities with spline interpolation, direct modelling of the asset’s implied density and surface-level parametrizations followed by AI-driven fitting as illustrated in \cite{GnabeyeuKarkarIdboufous2024}.

\medskip
\noindent However, the formulation~\eqref{eq:Volterra} and~\eqref{eq:funcEq} naturally raises fundamental questions regarding the \emph{well-posedness} (at least in the sense of the theory of weak solutions) and the \emph{positivity} of solutions to such stabilized Volterra equations~\eqref{eq:Volterra} for any deterministic and positive functions \(\varsigma\) and \(\phi\). In this paper, we tackle these issues through a continuous-time scaling limit of suitably time-modulated rescaled, non-Markovian self-exciting linear Hawkes processes. 
As a consequence, our approach offers a probabilistic foundation, inspired by microstructure,
%our approach provides a probabilistic and microstructural foundation 
for the \emph{fake stationary rough Heston model}, thereby extending the applicability of the results from \cite{EGnabeyeuPR2025} on more general \emph{fake stationary affine Volterra processes} to this setting. The core developments of this paper build on Hawkes processes with nonzero initial conditions, as described below.

\subsection{Hawkes processes: background and related work} % Literature Review}

Let \( N \) be a simple point process on the real line \( \mathbb{R} \), defined by a family of integer-valued random variables \( \{ N(o) \}_{o \in \mathcal{B}(\mathbb{R})} \) taking values in \( \N \), indexed by the Borel \( \sigma \)-algebra \( \mathcal{B}(\mathbb{R}) \), where 
$
N(o) = \sum_{i \in \mathbb{N}} \mathbf{1}_o(\tau_i),
$
and the random variable $\tau_i$ denoting the arrival time of the $i$-th event, for each $i \in \mathbb N$ i.e. \( (\tau_i)_{i \in \mathbb{N}} \) is a sequence of real-valued random variables satisfying \( \tau_i < \tau_{i+1} \) for all \( i \in \mathbb{N} \). 
Let \( \mathcal{F}_t = \sigma(N(o), o \subset (0, t]) \) be the filtration generated by the random variables \( N(o) \), with \( o \in \mathcal{B}(\mathbb{R}) \) and which represents the history of the process.
 Let $\mu \in    L^1_{\rm loc} (\mathbb{R}_+;\mathbb{R}_+)$ be a non-negative and locally integrable function. We define the univariate \textit{linear Hawkes process} with \textit{baseline intensity }
 or \textit{immigration density} $\mu$ and some \textit{kernels} or \textit{exciting functions} \(\varphi \in L^1(\mathbb{R}_+; \mathbb{R}_+)\) as a simple random point process \((N, \Lambda)\) that models self-exciting arrivals of random events via \(N := \{N(t): t \geq 0\}\), and possesses the \( \mathcal{F}_t \)-\textit{stochastic intensity} \(\Lambda\) defined below, which depends on the past realizations of the process itself:
\begin{equation}\label{HawkesDensityLinear}
	\Lambda_t = \mu(t)+ \sum_{0<\tau_i<t} \varphi(t-\tau_i) = \mu(t)+ \sum_{i=1}^{N(t)} \varphi(t-\tau_i) =\mu(t) + \int_{(0,t)} \varphi(t-s)N(ds), 
\end{equation}
The \textit{baseline intensity } $\mu$ captures the immigration of exogenous events and the \textit{kernels} $\varphi$ captures the self-exciting impact of past events on the arrivals of future events (For instance, in finance, this enables the modeling of the self-exciting nature of the order flow).
As a result of the expression in equation ~\eqref{HawkesDensityLinear}, the univariate Hawkes process is sometimes referred to as a \emph{self-exciting point process} in the literature.

\medskip
\noindent The core idea behind Hawkes processes is to model the influence of historical events on the occurrence of future events. Setting the initial state condition to zero in~\eqref{HawkesDensityLinear}, as is commonly done in many studies on scaling limits in the literature, implies that no events have occurred before time zero. This assumption, however, introduces inconsistency in the model's dynamics, as it overlooks %neglects 
the fundamental dependency of future event arrivals on past events. In this regard, we are going to consider a standard Hawkes process \( N \) defined on the whole real line with intensity:
\begin{equation}\label{HawkesDensityLinear2}
	\Lambda_t = \mu(t)+ \sum_{\tau_i<t} \varphi(t-\tau_i) =\mu(t) + \int_{(-\infty,t)} \varphi(t-s)N(ds) = Z_0(t) + \mu(t) + \int_{(0,t)} \varphi(t-s)N(ds), 
\end{equation}
The function \( Z_0 \in L^1_{\text{loc}}(\mathbb{R}^+; \mathbb{R}^+) \) is a random function evolving deterministically for \(t\geq0\) i.e. for \(t>0, Z_0(t)\), is \(\mathcal F_0-\) measurable, where \(\mathcal F_0\) is the history of the process up to time 0.  The random process \(Z_0 \) represents the combined impact of all the events that arrived prior to time zero on future arrivals at time \( t > 0 \). It defines the initial distribution required to make the point process stationary. In that particular case where the first order mean of the intensity process is as least constant, the mean impact of the random function \(Z_0\) is given as follow:
\begin{equation}\label{ExpectationZ} \mathbb{E} [ Z_0(t)] =
	\mathbb{E} \left[ \int_{-\infty}^0 \varphi(t - s) \, dN(s) \right] = \int_{-\infty}^0 \varphi(t - s) \, \mathbb{E} [ \Lambda_s]ds = \mathbb{E} [ \Lambda_0]\int_{-\infty}^0 \varphi(t - s) \, ds = \mathbb{E} [ \Lambda_0] \cdot \Phi(t).
\end{equation}
Thus, \(Z_0 (t)\) is non-negative, converges almost surely to zero, is integrable almost surely over any finite interval \([0,T]\) and satisfy $\mathbb{E} [ Z_0(t)] \overset{t\to+\infty}{\to} 0$ if \((\Lambda_t)_{t}\) has at least constant mean.

\medskip
\medskip
\noindent The long-term behavior of the Hawkes process \((N,\Lambda) \)  is determined by the temporal release \(\mathit{\Phi}(t) := \int_t^\infty \varphi(s) \, ds, \quad t \geq 0.\)
Here, \(  a := \|\varphi\|_{L^1} = \mathit{\Phi}(0) \) represents the total impact of an event  on the arrival of future events, measured by its expected number of descendants, with \( \mathit{\Phi} \) describing its \textit{temporal release} of the total impact.
In \cite{BacryDelattreHoffmannMuzy2013}, it is shown that under the stability condition \(a = \mathit{\Phi}(0)= \int_0^{+\infty} \varphi(s) \, ds < 1 \), the asymptotic behavior of a Hawkes process resembles that of a Poisson process: \(N_t\) has asymptotically stationary increments, and \(\Lambda_t\) becomes asymptotically stationary as \(t \to \infty\). This behavior yields useful approximations and convergence guarantees for statistical inference procedures via functional limit theorems (FLTs) (FLLN and FCLT).
When the stability condition is violated, for nearly unstable Hawkes processes, neither a standard FLLN nor a standard FCLT  can
be expected.
In \cite{JaissonRosenbaum2015,ElEuchFukasawaRosenbaum2018,ElEuchR2018}, the authors study the scaling limit of Hawkes processes when the stability condition is nearly violated. This is motivated by empirical observations in financial data, where Hawkes processes are commonly used to model the clustered nature of order flows in markets. The near instability required to fit the data reflects the high degree of endogeneity in modern markets, driven by high-frequency trading. Essentially, a significant proportion of orders are triggered by other orders. In this context, \cite{JaissonRosenbaum2015,ElEuchFukasawaRosenbaum2018} proves that the limiting distribution of a sequence of nearly unstable Hawkes processes is that of an integrated Cox-Ingersoll-Ross (CIR) process and integrated fractional Cox-Ingersoll-Ross (CIR) process respectively. In these settings, the sequence of renormalized processes satisfies the stability condition but with a kernel \( \varphi_T \) that depends on the observation scale \( T \), such that \( \|\varphi_T\|_1 \underset{T \to +\infty}{\to} 1 \). These are referred to as nearly unstable Hawkes processes. We aim to extend these results to time-dependent settings and initial conditions.

\subsection{Our contribution}
Building upon the asymptotic limits established in \cite{JaissonRosenbaum2015,JaissonRosenbaum2016, ElEuchFukasawaRosenbaum2018}, we prove that, after applying an appropriate \textit{time-dependent scaling} in both time and space, to the intensity process~\eqref{HawkesDensityLinear2}, the intensity of a nearly unstable Hawkes process can be asymptotically approximated by a time-inhomogeneous stochastic rough fractional square-root process, denoted \( \Lambda^* \), and that the Hawkes process \(N\) behaves asymptotically as the integrated process of \(\Lambda^*\).
In particular, when the Hawkes process % is stationary (or 
has constant mean intensity over time, the limiting process is precisely a fractional Cox-Ingersoll-Ross (CIR) equation (constant mean-reversion parameter) with a time-dependent diffusion coefficient.
We focus on the weak convergence of the integrated rescaled intensity processes. 
Establishing the weak convergence of this sequence allows us to infer the limiting process 
of the original sequence of rescaled intensity processes by differentiating the limit 
of the integrated processes, provided that the original sequence is $\mathcal{C}$-tight.
Our main result states that under mild assumption the sequence of suitably time-modulated rescaled intensity of self-exciting linear Hawkes processes
converges to the \emph{unique (in law) positive
solution} of a stochastic rough fractional square-root equation with time-inhomogeneous drift and diffusion coefficients. 
We also derive  \emph{global moment bounds} for this resulting time-dependent Volterra square-root process. Moreover,
we show that its  \emph{H\"older increments are uniformly bounded in \(L^p\) for every \(p>0\)} and derive the  \emph{regularity} and \emph{maximal inequality} of the process.

\medskip
\noindent We assume throughout this paper that the Hawkes process \( (N, \Lambda) \) with kernel \( \varphi \) and immigration density \( \mu \) is defined on a complete probability space \( (\Omega, \mathcal{F}, \P) \), endowed with a filtration \( \{\mathcal{F}_t : t \geq 0\} \) that satisfies the standard or usual conditions.

\medskip
\noindent {\sc \textbf{Notations.}} 

\smallskip 
\noindent $\bullet$ For $p\in(0,+\infty)$, $L_{\mathbb H}^p(\P)$ or simply $L^p(\P)$ denote the set of  $\mathbb H$-valued random vectors $X$  defined on a probability space $(\Omega, {\cal A}, \P)$ such that $\|X\|_p:=(\E[\|X\|_{\mathbb H}^p])^{1/p}<+\infty$.
%\medskip

\noindent $\bullet$  For all \(t \in [0,T]\), \(\|\mathbf{x}\|_t = \sup_{s \in [0, t]} |\mathbf{x}_s| = \sup\Big\{ |\mathbf{x}_s|: s \leq t  \Big\}, \qquad \forall \, \mathbf{x} \in {\cal C}([0,T], \mathbb H)\) and \(| \cdot | := \| \cdot \|_{\mathbb{H}}.\)

\smallskip 
\noindent $\bullet$ For $f, g \!\in {\cal L}_{\R_+,loc}^1 (\R_+, {\rm Leb}_1)$, we define their convolution by $f*g(t) = \int_0^tf(t-s)g(s)ds$, $t\ge 0$.

\smallskip 
\noindent $\bullet$ For $f, g \!\in {\cal L}_{\R_+,loc}^2(\R_+, {\rm Leb})$ and $M$ a martingale, we define  their stochastic convolution by 

\(\qquad \hspace{3cm} f\stackrel{M}{*}g = \int_0^t f(t-s)g(s) dM_s, \quad t\ge 0.\)

\smallskip 
\noindent $\bullet$ For a random variable/vector/process $X$, we denote by $\mathcal L(X)$ or $[X]$ its law or distribution. 

\smallskip 
\noindent $\bullet$ $X\perp \! \! \!\perp Y$  stands for independence of random variables, vectors or processes $X$ and $Y$.  

%\smallskip 
\noindent $\bullet$ For a positive and measurable function \( \varphi: \mathbb{R}^+ \to \mathbb{R}^+ \), we denote: 

\(\qquad \hspace{1.5cm} \| \varphi \|_1 := \int_0^{+\infty} \varphi(u) \, du , \quad \| \varphi \|_2^2 := \int_0^{+\infty} \varphi(u)^2 \, du  \quad \text{and} \quad \displaystyle \|\varphi\|_{\sup} := \sup_{u\in \mathbb{R}^+}|\varphi(u)|.\)

\noindent $\bullet$ For \(t_0>0\), write \( \mathbb{D} := (D([0,t_0], \mathbb{R}), \text{J}_1) \) as  the usual Skorokhod
space \footnote{The Skorokhod space $D([0,t_0]  \times \Omega, \mathbb{R}) $ refers to the set of stochastic processes defined on $[0,t_0]$ that are right-continuous with left limits (càdlàg). This space generalizes the classical space $\mathcal C([0,t_0] \times \Omega, \mathbb{R}^d)$, which is equipped with the uniform convergence norm.A specific metric $d$ on the Skorokhod space induces a topology that is well-suited for handling discontinuities.} i.e. the space of  all $\R$-valued càdlàg functions or processes on \( [0,t_0] \) equipped with the Skorokhod topology \( J_1 \) (see, e.g., Billingsley \cite{Billingsley1999}). \( \overset{\mathcal L }{\Rightarrow} \) stands for the convergence in distribution as a process in the Skorokhod topology.
% \( \overset{\mathcal L - \mathcal S}{\Rightarrow} \) 
\section{Time-dependent fractional square root processes as limits of heavy-tailed and nearly unstable Hawkes processes}
Our goal in the sequel is to show that a suitably rescaled version of the intensity process \(\Lambda_t\) with a time-dependent baseline \(\mu_T\) asymptotically behaves as an inhomogeneous Volterra Cox-Ingersoll-Ross with time-dependent mean-reversion parameter.

\medskip
\noindent From now on, we set \(N((0, t]) = N_t\) and for a given $T$, let $(N_t^T)_{t\geq0}$ be a sequence of one-dimensional Hawkes processes  observed on the time interval $[0,T]$, indexed by \(T> 0\), going to infinity and defined on a probability space $(\Omega^T,\mathcal{F}^T,\mathbb{P}^T)$ equipped with the filtration $(\mathcal{F}_t^T)_{t\in[0,T]}$, where $\mathcal{F}_t^T$ is the $\sigma$-algebra generated by the random variables \( N^T(o) \), with \( o \in \mathcal{B}((0,t]) \).
Our asymptotic setting is that the observation scale $T$ goes to infinity. The intensity process $(\Lambda_t^T)$ is defined for $t\geq 0$ by
\begin{equation}\label{eq:intensity}
	\Lambda_t^T=Z_0^T(t) + \mu^T(t)+\int_0^t \varphi^T(t-s) dN^T_s
\end{equation}
where $(\mu^T (\cdot))_{T\geq0}$ is a sequence of positive real values functions and the $ (\varphi^T)_{T\geq0} $ are non-negative measurable functions on $\mathbb{R}^+$ which satisfies $\|\varphi^T\|_{1}<+\infty$. 
We assume that the sequence $(\mu^T (\cdot))_{T\geq0}$  is constant in the negative half-line, i.e.
$$\mu^T(t) = \left\{
\begin{array}{ll}
	& \mu^T(0) = \mu^T \text{ for } t \leq 0, \; \mu^T \text{ bounded for all } T>0 \\
	& \mu^T(t) \text{ if } t > 0 \text{ to be specified later }, 
\end{array} 
\right.$$
so that, the intensity has a constant mean over the the negative half-line, i.e. $\mathbb{E}[\Lambda^T_t] = \mathbb{E}[\Lambda^T_0] \quad \forall t \leq 0$ and thus for \(t\geq0\), equation~\eqref{ExpectationZ} gives :
$ \mathbb{E}[Z^T_0 (t)] = \mathbb{E}[\Lambda^T_0]\int_t^{+\infty} \varphi^T(s)ds = \mathbb{E}[\Lambda^T_0]\mathit{\Phi}^T(t)$. 

\medskip
\noindent From this point forward, we consider a family of one-dimensional Hawkes processes with intensity defined as follows. Let \(\Lambda_0^{*T} \in L^1(\R_+)\) be a positive real-valued random variable such that $\mathbb{E}[\Lambda_0^{*T}]$ is bounded for every \(T>0\). 
For a given time horizon \(T > 0\) and \(t_0> 0\), let \((N_t^T)_{t \geq 0}\) denote a sequence of Hawkes processes observed over the interval \([0, Tt_0]\), indexed by \(T\) and defined on the probability space \((\Omega^T, \mathcal{F}^T, \mathbb{P}^T)\), equipped with the filtration \((\mathcal{F}_t^T)_{t \in [0, Tt_0]}\). The associated intensity process \((\Lambda_t^T)_{t \geq 0}\) is defined for all \(t \geq 0\) by (note that, similar idea has been used in \cite{HorstXuZhang2024}):
\begin{align}
	\Lambda_t^T 
	= \frac{\Lambda_0^{*T}}{\mathbb{E}[\Lambda_0^T]} \mathbb{E}[Z_0^T(t)] + \mu^T(t) + \int_0^t \varphi^T(t-s) \, dN_s^T 
	= \Lambda_0^{*T} \, \Phi^T(t) + \mu^T(t) + \int_0^t \varphi^T(t-s) \, dN_s^T \label{eq:intensity2}
\end{align}
% Assumptions and intuitions for the results. 
%We describe in this section our asymptotic framework together with intuitions about our main results which are given in Section 3.
%\subsection{Intuition and Main Heuristic Convergence Results for Heavy-Tailed Nearly Unstable Hawkes Processes}\label{subsect:intuition}
\subsection{Assumptions and heuristic computations for the convergence of heavy-tailed nearly unstable Hawkes processes}\label{subsect:intuition}
We are interested in the asymptotic behavior of the renormalized intensity processes \( \Lambda^{T}_t \) for which we need to understand the long-term limit. Let us write using equation ~\eqref{eq:compensator}, 
\(M^T_t = N^T_t -N^T_0 - \int_0^t \Lambda^T_s \, ds\)
for the martingale associated with the point process \( N^T_t \). % Note that, we assume \(M^T_0 = 0\) up to rather consider the representation \(M^T_t = N^T_t-N^T_0 - \int_0^t \Lambda^T_s \, ds\)
We easily obtain owing to Theorem  \ref{MartRep} that the intensity process $\Lambda^T$ admits the representation: For every \(t >0\)
\begin{align}
	\Lambda^T_t 
	&= \Lambda_0^{*T} \, \mathit{\Phi}^T(t) + \mu^T(t) + \int_0^t R^{\prime T}_{-1}(t-s)\left(\Lambda_0^{*T} \, \mathit{\Phi}^T(s) + \mu^T(s)\right) \, ds + \int_0^t R^{\prime T}_{-1}(t-s) \, dM^T_s, \quad t > 0 \nonumber \\
	&= \Lambda_0^{*T} \left( \mathit{\Phi}^T(t) + (R^{\prime T}_{-1} * \mathit{\Phi}^T)_t \right) + \mu^T(t) + \int_0^t R^{\prime T}_{-1}(t-s)\mu^T(s) \, ds + \int_0^t R^{\prime T}_{-1}(t-s) \, dM^T_s \label{SVR2}
\end{align}
where the $-1$-resolvent $R^{\prime T}_{-1}$ of $\varphi^T$ is just the resolvent of the second kind $\Psi_{\varphi^T}$ of $\varphi^T$ (see Appendix~\ref{subsect-tools}).
Taking expectation after integration in equation~\eqref{SVR2} or recalling equation~\eqref{eq:mart} of Theorem \ref{MartRep} with the specific expression in \ref{eq:intensity2} , we can write using in the last equality that $
\int_0^{\cdot} (f * g)(s) \, ds = \left( \int_0^{\cdot} f(s) \, ds \right) * g
$:
\begin{align}
	\mathbb{E}[\mathcal{I}^{\Lambda^{T}}_t] 
	&= \int_0^t \left( \mathbb{E}[\Lambda_0^{*T}]\, \Phi^T(s) + \mu^T(s) \right) ds 
	+ \left( \left( \mathbb{E}[\Lambda_0^{*T}]\, \Phi^T(\cdot) + \mu^T(\cdot) \right) * \mathcal{I}^{R^{\prime T}_{-1}} \right)_t \nonumber \\
	&= \left(1 + (\mathbf{1} * R^{\prime T}_{-1})_t \right) 
	\int_0^t \left( \mathbb{E}[\Lambda_0^{*T}]\, \Phi^T(s) + \mu^T(s) \right) ds \;\; \text{where} \;\; \mathcal{I}^{R^{\prime T}_{-1}}_t := \int_0^t R^{\prime T}_{-1}(s) ds.
	\label{eq:IntLam}
\end{align}

\noindent {\bf Remark.}
	If $ \mu^T(t) \equiv C^{\textit{ste}} = \mu^T$ over the whole real line, we know from Proposition \ref{prop:stat} that, the Hawkes process has constant mean and thus the above equation reads:
	\begin{equation}\label{SVR3}
		\Lambda^T_t = \Lambda_0^{*T} \, (\mathit{\Phi}^T(t) + (R^{\prime T}_{-1}*\mathit{\Phi}^T)_t) + \mu^T (1 + \int_0^t R^{ T}_{-1}(s) ds) + \int_0^t R^{\prime T}_{-1}(t-s)dM^T_s ,\quad t\geq 0.
	\end{equation} 

\begin{assumption}[On nearly unstable heavy-tailed Hawkes processes]\label{assum:Lhawkes}\textcolor{white}{.}
	
\medskip
\noindent {\em (1).}  We are working in the nearly unstable  heavy-tailed
		case since $\mathit{\Phi}(0)=1$. We assume
		for $t\in \mathbb{R}^+$,
		$\varphi^T(t)=a_T\varphi(t),$ where $(a_T)_{T\geq 0}$ is a sequence of positive numbers converging to $1$ such that for all $T$, $a_T<1$ and $\varphi$ is a non-negative %measurable 
		completely monotone function such that $\mathit{\Phi}(0)=\|\varphi\|_{L_1}=1$ and \(\varphi(0)< \infty\). 
		\item Furthermore, the function \(\varphi\) has a regular varying tail, i.e. there exists  some $\alpha\in(0,1)$ so that as \(x \to +\infty\), $\mathit{\Phi}(x)=l(x)x^{-\alpha}$
		where \(l(x)\) is {\em slowly varying} at infinity; that is, \(\frac{l(cx)}{l(x)} \to 1\) for all \(c>0\).\\The reader is invited to refer to \cite{BiGoTe1989,DeHaanFerreira2006} for general results on regularly varying functions. 
		More specifically, we consider the case where there exists some positive constant $C > 0 $, (\(l(x) \equiv 1\)),  such that \(\underset{x\rightarrow+\infty}{\emph{lim}}\alpha x^{\alpha}\mathit{\Phi}(x)=C.\)
		
	\medskip
	\noindent {\em (2).}  Set \(\delta := \frac{C \Gamma(1-\alpha)}{\alpha}.\)
	For \(T > 0\), let \(\theta_0 \in (0,\infty)\), and consider a deterministic
	function \(\tilde{\mu}^T : [0, \frac{t_0}{T}] \to (0,\infty)\), assumed to be Lipschitz, continuous with a
	bounded first derivative and satisfying \(\tilde{\mu}^T (0) = \frac{\mu^T(0)}{\theta_0} = \frac{\mu^T}{\theta_0}\) so that for every \(t > 0\), \(\lim_{T\to +\infty} \frac{\mu^T}{\tilde{\mu}^T (\frac{t}{T})} = \theta_0.\)
	Finally, assume that there exist positive constants \(\lambda\) and \(\nu\),
	a non-negative bounded Borel function \(\varsigma\), and a positive continuous
	function $\phi \in {\cal L}^1([0,t_0],\text{Leb}_1)$  such that, uniformly as \(T \to \infty\),
	\vspace{-.3cm}	
	%Set \(\delta = \frac{C \Gamma(1-\alpha)}{\alpha}\), and for \(T>0\), consider \(\theta_0 \in \R^*_+\), then a deterministic function \(\tilde{\mu}^T : [0, Tt_0] \to \R_+^*\), lipstchiz with bounded derivative, uniformly continuous on the interval \([0, t_0]\) such that \(\tilde{\mu}^T (0) = \frac{\mu^T(0)}{\theta_0} = \frac{\mu^T}{\theta_0}\) so that \(\forall t>0, \lim_{T\to +\infty} \frac{\mu^T}{\tilde{\mu}^T (\frac{t}{T})} = \theta_0\) and assume there exists some positive constants \( \lambda, \nu \), a non-negative bounded borel function \(\varsigma\) and a  continuous positive function $\phi \in {\cal L}^1([0,t_0],\text{Leb}_1)$ such that, uniformly as $ T \to \infty$:
		\[
		T^\alpha(1 - a_T) \to \lambda\delta; \quad \forall t >0,\hspace{.1cm}  T^{1 - \alpha} \tilde{\mu}^T (\frac{t}{T})  \to \frac{\lambda}{\nu^2\delta \varsigma^2(t)} \quad \text{and}  \quad T^\alpha\frac{ a_T(1 - a_T)}{\tilde{\mu}^T (\frac{t}{T})} \to \phi(t).
		\]
		so that we recover up to the scaling by \(\theta_0\), the assumption of \cite{JaissonRosenbaum2016,ElEuchR2018} provided \( \tilde{\mu}^T \equiv C^\textit{ste} = \frac{\mu^T(0)}{\theta_0} = \frac{\mu^T}{\theta_0}\)in which case \(\varsigma\equiv 1\).
		
	\medskip
	\noindent {\em (3).}  There exist a real-valued random variable \(\Lambda_0^{*} \in L^1(\R_+)\), weak limit of the sequence of positive real-valued random variable $(T^{- \alpha}\Lambda_0^{*T})_{T\geq0}$ as \(T \to +\infty \) i.e. \(\lim_{T\to +\infty} T^{- \alpha}\Lambda_0^{*T} = \Lambda_0^{*}.\)
\end{assumption}

 \noindent {\bf Remark and discussions on assumption \ref{assum:Lhawkes}.}
	\begin{enumerate}
		\item {\em Example of sequence:} The sequence $(a_T)_{T\geq 0}$ can be defined for example as folow: For some $\alpha\in(0,1)$ and positive constants \( \lambda \)  defined in \ref{assum:Lhawkes} (2) and (3) respectively, $a_T = 1 - \frac{\lambda}{T^\alpha}$ and $T > \lambda^{\frac1\alpha}$
		\item {\em Processes of Mittag-Leffler type:} A Hawkes process \( (N, \Lambda) \) is said to be of Mittag-Leffler type with index \( (\alpha, \lambda) \) if \( \varphi = f_{\alpha, \lambda} \), so that  $C = \frac{\alpha}{\lambda\Gamma(1-\alpha)}$ since $\mathit{\Phi}(x) \underset{x \to \infty}{\sim} \frac{1}{\lambda \, \Gamma(1 - \alpha)} \, x^{-\alpha}$ (See \cite{GorMain2000, Mainardi2014}). In particular, for the case  \( \varphi = f_{\alpha, 1} \), we have $C = \frac{\alpha}{\Gamma(1-\alpha)}$.

		A typical example of a completely monotone kernel $\varphi$ with heavy tail $\mathit{\Phi}(0)=\|\varphi\|_{L_1}=1$ and 
		\(\varphi(0)< \infty\) is a power-law kernel of the form
		\begin{equation}
		\varphi(t) = \frac{\alpha b \tau^\alpha}{(\tau + b t)^{1+\alpha}}, \qquad \text{for some positive constants } \alpha \in \left(\tfrac{1}{2},1\right)
		\text{ and } \tau, b > 0.
	    \end{equation}
		We will consider this kernel in this work to account for the empirically well-documented long-range dependencies in order arrivals.
		The function $\varphi$ is a probability density function on $\mathbb{R}_{+}$ with
		tail distribution \(\Phi(t) := \int_{t}^{\infty} \varphi(s)\,ds
		= \tau^\alpha (1 + b t)^{-\alpha},
		\qquad t \ge 0.\)
		\item Since, \(\tilde{\mu}^T (0)  = \frac{\mu^T}{\theta_0}\) and \(\tilde{\mu}^T(t)>0\) for all \(t>0\), we have that \(\tilde{\mu}^T\) is uniformly bounded away from zero, i.e., there exists \( c > 0 \) such that: \(\qquad	\inf_{T > 0} \inf_{s \in [0, \frac{t_0}{T}]} \tilde{\mu}^T(s) \geq c.\)
		\item A function \( \varphi : [0, \infty) \to \mathbb{R} \) is said to be \emph{completely monotone} if it is \( C^\infty \) and its derivatives 
		alternate signs, i.e.
		
		\centerline{$(-1)^n \varphi^{(n)}(t) \geq 0 
			\qquad \text{for all } n \geq 0 \text{ and } t > 0.$}
		Completely monotone functions have the particular property of being Laplace transforms of nonnegative measures \cite{Bernstein1929}. 
		More precisely, there exists a nonnegative measure \( \mu \) on \( [0, \infty) \) such that \(\varphi(t) = \int_{[0,\infty)} e^{-xt} \, \mu(dx).\)	
		Intuitively, completely monotone functions are a natural choice for financial modeling,
		since the function decreases smoothly without oscillatory behavior.
	\end{enumerate}

\medskip
\noindent Let us first establish the convergence or  asymptotic result for the integrated rescaled resolvent function that will be key to our analysis of the long-run behavior of Hawkes processes.
\begin{proposition}[The limits for the integrated rescaled resolvent function]\label{Lem:Lhawkes}
	Under Assumption \ref{assum:Lhawkes}, and denoting $\mathcal{L}(f) :=L_{f}$ the Laplace transform for any functional f, we have the following claims where  \( f_{\alpha, \lambda} \) is the Mittag-Leffler density function  defined in Example \ref{Ex:frackernel} and \(R_{\alpha, \lambda}(t)\) denotes the corresponding resolvent function.
	\begin{enumerate}
		\item $\mathcal{L}\left(\int_0^{t} T(1 - a_T) R^{\prime T}_{-1}(Tu) \, du \right)(z) \underset{T \to +\infty}{\rightarrow} \frac{1}{z} L_{f_{\alpha, \lambda}}(z)$.
		\item  The finite measure on \( \mathbb{R}_+ \) with density
		\(m^{T}(ds) := T(1 - a_T) R^{\prime T}_{-1}(Ts)\,ds =: f^T(s)\,ds\) converges weakly to the finite measure on \( \mathbb{R}_+ \) with density \(m^*(ds) := f_{\alpha,\lambda}(s)\,ds\).
		
		Consequently,
		$\int_0^{t} T(1 - a_T) R^{\prime T}_{-1}(Tu) \, du$
		converges uniformly towards
		$ \int_0^{t}f_{\alpha, \lambda}(u) \, du,$ as $T \to +\infty$ i.e we have the following convergence of the rescaled integrated resolvent function: 
		\[\sup_{t\geq0}| \int_0^{t} T(1 - a_T) R^{\prime T}_{-1}(Tu) \, du  - \int_0^{t}f_{\alpha, \lambda}(u) \, du|\underset{T \to +\infty}{\rightarrow} 0\]
		% $\qquad \sup_{t\geq0}| \int_0^{t} T(1 - a_T) R^{\prime T}_{-1}(Tu) \, du  - \int_0^{t}f_{\alpha, \lambda}(u) \, du|\underset{T \to +\infty}{\rightarrow} 0 $
	%	In particular, over and $ T(1 - a_T) R^{\prime T}_{-1}(Tu) \underset{T \to +\infty}{\rightarrow} f_{\alpha, \lambda}(u)$ 
	\end{enumerate}

\end{proposition}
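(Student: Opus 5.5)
The plan is to work entirely on the Laplace side, using the explicit form of the resolvent of the second kind. Since $R^{\prime T}_{-1}=\Psi_{\varphi^T}=\sum_{k\ge1}(\varphi^T)^{*k}$ and $\|\varphi^T\|_1=a_T<1$, we have
\[
L_{R^{\prime T}_{-1}}(z)=\frac{a_T L_\varphi(z)}{1-a_T L_\varphi(z)},\qquad z>0 .
\]
The rescaled density $f^T(s)=T(1-a_T)R^{\prime T}_{-1}(Ts)$ then has Laplace transform
\[
L_{f^T}(z)=(1-a_T)\,\frac{a_T L_\varphi(z/T)}{1-a_T L_\varphi(z/T)} .
\]
Because integration corresponds to division by $z$, claim 1 reduces to showing $L_{f^T}(z)\to L_{f_{\alpha,\lambda}}(z)=\lambda/(\lambda+z^\alpha)$ for every $z>0$.

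For this limit, write $1-L_\varphi(x)=x\,L_{\Phi}(x)$, which follows by integrating by parts. Assumption~\ref{assum:Lhawkes}(1) gives $\Phi(t)\sim \frac{C}{\alpha}t^{-\alpha}$. A Tauberian (Karamata) argument then yields
\[
L_\Phi(x)\sim \frac{C}{\alpha}\Gamma(1-\alpha)\,x^{\alpha-1}=\delta x^{\alpha-1}\quad (x\downarrow0),
\qquad\text{hence}\qquad 1-L_\varphi(z/T)\sim \delta z^\alpha T^{-\alpha}.
\]
Decompose the denominator as
\[
1-a_TL_\varphi(z/T)=(1-a_T)+a_T\bigl(1-L_\varphi(z/T)\bigr).
\]
Multiplying numerator and denominator by $T^\alpha$ and using $T^\alpha(1-a_T)\to\lambda\delta$ together with $a_T\to1$ gives
\[
L_{f^T}(z)\to \frac{\lambda\delta}{\lambda\delta+\delta z^\alpha}=\frac{\lambda}{\lambda+z^\alpha}.
\]
This is the Laplace transform of the Mittag-Leffler density $f_{\alpha,\lambda}$ of Example~\ref{Ex:frackernel}, so claim 1 holds.

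For claim 2, the measures $m^T$ are positive, since $R^{\prime T}_{-1}\ge0$ as a series of convolutions of nonnegative functions. Their total masses are $m^T(\mathbb R_+)=L_{f^T}(0)=(1-a_T)\frac{a_T}{1-a_T}=a_T\to1=\|f_{\alpha,\lambda}\|_1$. Pointwise convergence of Laplace transforms on $(0,\infty)$, combined with convergence of total masses, gives weak convergence $m^T\Rightarrow m^*$ by the continuity theorem for Laplace transforms of finite measures (normalize by $a_T$ to work with probability measures). The limit $m^*$ has a continuous distribution function $F^*(t)=\int_0^t f_{\alpha,\lambda}$, so $F^T(t)=m^T([0,t])\to F^*(t)$ for every $t$. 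A Pólya-type argument then upgrades this to uniform convergence on $\mathbb R_+$: the functions are monotone, the limit is continuous, and the total masses converge at $t=\infty$. This gives exactly the stated $\sup_{t\ge0}$ bound.

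The main obstacle is the Tauberian step, namely passing from $\alpha x^\alpha\Phi(x)\to C$ to the small-$x$ asymptotics of $L_\Phi$. Here $\Phi$ is monotone with a pure power tail, since $l\equiv1$, so I would invoke Karamata's Abelian theorem directly (see \cite{BiGoTe1989}). Alternatively, one can prove it by hand: change variables $u=xt$ in $x^{1-\alpha}\int_0^\infty e^{-xt}\Phi(t)\,dt$, then apply dominated convergence, using the bound $\Phi(t)\le K\min(1,t^{-\alpha})$ and the integrability of $e^{-u}u^{-\alpha}$ for $\alpha<1$. The remaining steps are routine.
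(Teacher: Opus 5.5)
Your proposal is correct and follows the paper's route. You compute the Laplace transform of the rescaled resolvent as $(1-a_T)\,a_TL_\varphi(z/T)/(1-a_TL_\varphi(z/T))$. You derive $1-L_\varphi(x)=\delta x^\alpha+o(x^\alpha)$ by the same integration-by-parts and dominated-convergence (Abelian) argument. You then conclude with the continuity theorem for Laplace transforms. Your explicit total-mass check $m^T(\mathbb{R}_+)=a_T\to 1$, together with the P\'olya-type argument for the $\sup_{t\ge 0}$ bound, supplies details the paper only asserts via ``injectivity of the Laplace transform and the dominated convergence theorem.''
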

\noindent We postpone the proof of the above proposition to Appendix \ref{app:lemmata}. We also state the following stability Result for the density \(
m^{T}(ds) \).
%		We also state the following stability lemma for the density \(m^{T}(ds) \).
\begin{proposition}[Stability result for the density \(
	m^{T}(ds) \)]\label{prop:stability}
	Consider a sequence \( (g^T)_{T \geq 0} \) of c\`adl\`ag functions on \([0, t_0]\), converging in \( L^1([0, t_0]) \) to a continuous function \( g \). Let the sequence \( (G^T)_{T \geq 0} \) be defined by
	\[
	G^T(t) := \int_0^{t} g^T(t - s) \, m^{T}(ds), \quad t \leq t_0, \ T \geq 0,
	\]
	where \( m^{T} \) is defined in the claim (2) of Proposition \ref{Lem:Lhawkes}.
	If \( G^T \to G \) in \( L^1([0, t_0]) \) with \( G \) continuous, then
	\(G(t) = \int_0^t g(t - s) m^*(ds) = \int_0^t g(t - s)f_{\alpha,\lambda}(s)\, ds, \quad t \leq t_0.\)
\end{proposition}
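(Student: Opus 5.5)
Since $G^T\to G$ in $L^1([0,t_0])$ by hypothesis, it suffices to show that
$$\widetilde G^T(t):=\int_0^t g^T(t-s)\,m^T(ds)\ \longrightarrow\ \int_0^t g(t-s)\,f_{\alpha,\lambda}(s)\,ds=:\widetilde G(t)$$
in $L^1([0,t_0])$ (or along a subsequence a.e.). Uniqueness of $L^1$ limits then gives $G=\widetilde G$ almost everywhere. Both $G$ and $\widetilde G$ are continuous; for $\widetilde G$ this holds because it is the convolution of a continuous $g$ with the integrable density $f_{\alpha,\lambda}$. Hence the identity holds for every $t\le t_0$.

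We split the difference into two terms:
$$G^T(t)-\widetilde G(t)=\int_0^t \big(g^T-g\big)(t-s)\,m^T(ds)+\Big(\int_0^t g(t-s)\,m^T(ds)-\int_0^t g(t-s)\,m^*(ds)\Big).$$

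\textbf{First term.} Integrate in $t$ over $[0,t_0]$ and apply Fubini (Young's inequality for measures):
$$\int_0^{t_0}\Big|\int_0^t (g^T-g)(t-s)\,m^T(ds)\Big|\,dt\le m^T([0,t_0])\,\|g^T-g\|_{L^1([0,t_0])}.$$
The total mass $m^T(\mathbb R_+)=T(1-a_T)\int_0^\infty R^{\prime T}_{-1}(u)\,du$ converges to $\int f_{\alpha,\lambda}=1$ by claim (2) of Proposition~\ref{Lem:Lhawkes}. Indeed, since $R^{\prime T}_{-1}$ is the resolvent of the second kind of $a_T\varphi$, its integral equals $a_T/(1-a_T)$, so $m^T(\mathbb R_+)=a_T\to1$. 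The masses are therefore uniformly bounded and the first term tends to $0$ in $L^1$.

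\textbf{Second term.} Fix $t\in(0,t_0]$. The function $s\mapsto g(t-s)\mathbf 1_{[0,t]}(s)$ is bounded, and continuous except possibly at $s=t$. The limit measure $m^*$ is absolutely continuous, so it charges no points. By claim (2) of Proposition~\ref{Lem:Lhawkes} ($m^T\Rightarrow m^*$ weakly) and the portmanteau theorem for bounded functions whose discontinuity set is $m^*$-null, the second term tends to $0$ for every $t$. It is also bounded by $2\|g\|_{\sup}\sup_T m^T(\mathbb R_+)$, so dominated convergence on $[0,t_0]$ gives convergence to $0$ in $L^1$.

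The expected main obstacle is making this second step rigorous. If $g(0)\neq0$, the truncated integrand jumps at $s=t$, and weak convergence alone does not directly handle that jump. I plan to use the portmanteau argument above. As a fallback, I would use the uniform convergence of the distribution functions $\int_0^t f^T\to\int_0^t f_{\alpha,\lambda}$, also stated in Proposition~\ref{Lem:Lhawkes}: integrate by parts, approximating the continuous $g$ uniformly by $C^1$ functions, writing $\int_0^t g(t-s)\,m^T(ds)=g(0)F^T(t)+\int_0^t g'(t-s)F^T(s)\,ds$ with $F^T(t)=m^T([0,t])$, and passing to the limit using $\sup|F^T-F^*|\to0$.

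Combining the two terms gives $G^T\to\widetilde G$ in $L^1([0,t_0])$, and the conclusion follows from uniqueness of $L^1$ limits together with continuity of $G$ and $\widetilde G$, as explained at the start.
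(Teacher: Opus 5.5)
Your proof is correct and follows the paper's argument. It uses the same triangle-inequality split, the same Fubini bound $m^T([0,t_0])\,\|g^T-g\|_{L^1([0,t_0])}$ for the $g^T-g$ term, and weak convergence of $m^T$ for the remaining term. There the paper simply cites the first claim of Corollary~\ref{Corol:LimitMeasure}, whereas your portmanteau argument (the only possible discontinuity, at $s=t$, is $m^*$-null) combined with dominated convergence in $t$ makes that step explicit.

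One small slip: after the substitution $v=Tu$, the total mass is $m^T(\mathbb R_+)=(1-a_T)\int_0^\infty R^{\prime T}_{-1}(v)\,dv=a_T$, without the extra factor $T$ you wrote.
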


\begin{corollary}[Uniform almost sure convergences]\label{Corol:LimitMeasure}
		Let \( f_{\alpha, \lambda} \) be the Mittag-Leffler density function  defined in Example \ref{Ex:frackernel}, \(R_{\alpha, \lambda}(t)\) denotes the corresponding resolvent function and \(\tilde{\mu}^T\) defined in Assumption~\ref{assum:mu}~(2).
		\begin{enumerate}
			\item 
			For a deterministic function \( \theta \), continuous on \( \mathbb{R}_+^* \), we have:
			\[\sup_{t\geq0}\Big|\int_0^{t} T(1 - a_T) R^{\prime T}_{-1}(T(t-s))\theta(s) \, ds
			- \int_0^t f_{\alpha,\lambda}(t-s)\theta(s)ds\Big|
			\underset{T \to +\infty}{\rightarrow} 0.\]
			Equivalently, for all \( t > 0 \),
			\(\int_0^{t} T(1 - a_T) R^{\prime T}_{-1}(T(t-s))\theta(s) \, ds
			\underset{T \to +\infty}{\rightarrow} \int_0^t f_{\alpha,\lambda}(t-s)\theta(s)ds.\)
			
			\item 
			The temporal release \(\mathit{\Phi}^T\) satisfies the integral equation
			\(\mathit{\Phi}^T(t) + (R^{\prime T}_{-1}*\mathit{\Phi}^T)(t)
			= a_T - (1 -a_T) \int_0^t R^{\prime T}_{-1}(s) ds;\)
			so that, under time and space rescaling, this function converges as follows:
			\[\sup_{t\geq0}\Big| T^\alpha\frac{ (1 - a_T)}{\tilde{\mu}^T (\frac{t}{T})}
			\big(\mathit{\Phi}^T(tT) + (R^{\prime T}_{-1}*\mathit{\Phi}^T)(tT)\big)
			- \big(\phi(t)- \int_0^t f_{\alpha,\lambda}(t-s)\phi(s)ds\big)\Big|
			\underset{T \to +\infty}{\rightarrow} 0.\]
			Equivalently, for all \( t > 0 \),
			\(T^\alpha\frac{ (1 - a_T)}{\tilde{\mu}^T (\frac{t}{T})}
			\big(\mathit{\Phi}^T(tT) + (R^{\prime T}_{-1}*\mathit{\Phi}^T)(tT)\big)
			\underset{T \to +\infty}{\rightarrow}
			\phi(t)- \int_0^t f_{\alpha,\lambda}(t-s)\phi(s)ds.\)
			
			\item 
			Moreover, as \( T \to \infty \), we have the uniform almost sure convergence on \([0,t_0]\):
			\(\forall\, t \in [0,t_0]\), it holds that
			\((1 - a_T)\frac{\mu^T}{\tilde\mu^T(\frac{t}{T})}
			\underset{T \to +\infty}{\rightarrow} 0.\)
		\end{enumerate}
\end{corollary}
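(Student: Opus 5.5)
We prove the three claims of Corollary~\ref{Corol:LimitMeasure} in order. Each one reduces to Proposition~\ref{Lem:Lhawkes}(2), namely the weak convergence $m^T \Rightarrow m^*$ together with the uniform convergence of the distribution functions $F^T(t):=\int_0^t f^T(u)\,du \to F(t):=\int_0^t f_{\alpha,\lambda}(u)\,du$, plus elementary algebra on resolvents.

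\textbf{Claim (1).} After the change of variables $u=t-s$, the quantity to control is $\int_0^t \theta(t-u)\,m^T(du)-\int_0^t\theta(t-u)\,m^*(du)$. For each fixed $t$, the map $u\mapsto \theta(t-u)\mathbf 1_{[0,t]}(u)$ is bounded and continuous except at the single point $u=t$. That point is $m^*$-null because $m^*$ has a density, so pointwise convergence follows from the portmanteau theorem. To get uniformity in $t$ on a compact $[0,t_0]$ (which is how we read ``$\sup_{t\ge0}$'', or else we assume $\theta$ bounded and uniformly continuous), we write the difference as a Stieltjes integral against $F^T-F$. If $\theta$ is of bounded variation we integrate by parts:
\[
\int_0^t\theta(t-u)\,d(F^T-F)(u)=\theta(0)(F^T-F)(t)+\int_0^t (F^T-F)(u)\,d_u\theta(t-u),
\]
which is bounded by $\bigl(|\theta(0)|+\mathrm{TV}(\theta)\bigr)\sup_u|F^T-F|(u)\to0$. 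For a merely continuous $\theta$ we first approximate it uniformly on $[0,t_0]$ by a Lipschitz $\theta_\varepsilon$. The error is bounded by $\|\theta-\theta_\varepsilon\|_\infty\,(m^T(\R_+)+m^*(\R_+))$, and this is uniformly small since $m^T(\R_+)=T(1-a_T)\int_0^\infty R'^T_{-1}(Ts)\,ds=a_T\le 1$.

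\textbf{Claim (2).} First the identity. Apply the resolvent equation $R'^T_{-1}=\varphi^T+\varphi^T*R'^T_{-1}$, write $\Phi^T(t)=a_T-\int_0^t\varphi^T$, and integrate the resolvent equation: $\mathbf 1*R'^T_{-1}=\mathbf 1*\varphi^T+(\mathbf 1*\varphi^T)*R'^T_{-1}$. This gives
\[
\Phi^T+R'^T_{-1}*\Phi^T=a_T+a_T(\mathbf 1*R'^T_{-1})-\bigl(\mathbf 1*\varphi^T+(\mathbf 1*\varphi^T)*R'^T_{-1}\bigr)=a_T-(1-a_T)(\mathbf 1*R'^T_{-1}).
\]
For the limit, evaluate at $tT$ and substitute $u=s/T$: we get $(1-a_T)\int_0^{tT}R'^T_{-1}(s)\,ds=F^T(t)$. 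Multiplying by $T^\alpha(1-a_T)/\tilde\mu^T(t/T)$ yields
\[
\frac{T^\alpha a_T(1-a_T)}{\tilde\mu^T(t/T)}-\frac{T^\alpha(1-a_T)}{\tilde\mu^T(t/T)}\,F^T(t).
\]
By Assumption~\ref{assum:Lhawkes}(2), the first term tends to $\phi(t)$ uniformly. In the second term, the prefactor equals the first term divided by $a_T$, so it is $\phi(t)+o(1)$ uniformly, while $F^T(t)\to F(t)$ uniformly. Since $\phi$ is continuous on $[0,t_0]$, hence bounded, the product converges uniformly to $\phi(t)F(t)$. It remains to identify $\phi(t)F(t)$ with $\int_0^t f_{\alpha,\lambda}(t-s)\phi(s)\,ds$. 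This is the main obstacle: the two agree when $\phi$ is constant, but not in general. What we would try first is to interpret the scaled prefactor under the convolution, i.e.\ to keep the time argument inside, with $\tilde\mu^T(s/T)$ in place of $\tilde\mu^T(t/T)$. Note that $\tilde\mu^T$ is only evaluated on the shrinking interval $[0,t_0/T]$, is Lipschitz, and satisfies $\mu^T/\tilde\mu^T(t/T)\to\theta_0$, so its oscillation there vanishes. The $t$-dependence of $\phi$ then comes only through the ratio $\mu^T/\tilde\mu^T(t/T)$, and combining this with Claim (1) applied to $\theta=\phi$ should give the stated convolution form. If that identification fails, we would state the limit as $\phi(t)\bigl(1-F(t)\bigr)$.

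\textbf{Claim (3).} Write $(1-a_T)\mu^T/\tilde\mu^T(t/T)=(1-a_T)\cdot\bigl(\mu^T/\tilde\mu^T(t/T)\bigr)$. The second factor converges to $\theta_0$ uniformly on $[0,t_0]$: $\tilde\mu^T$ is Lipschitz with bounded derivative, its argument lies in $[0,t_0/T]$, and it is bounded below by $c>0$ (see the Remark). The first factor $1-a_T\sim\lambda\delta T^{-\alpha}$ tends to $0$. Hence the product tends to $0$ uniformly, and since the quantity is deterministic the convergence holds trivially almost surely.
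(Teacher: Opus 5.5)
Claims (1), (3) and the identity in Claim (2) are fine. For (1), your route (integration by parts against $F^T-F$, a Lipschitz approximation of $\theta$, and the mass bound $m^T(\R_+)=a_T$) is more careful and more quantitative than the paper's. The paper only invokes $m^T\Rightarrow m^*$ and the continuity of $\theta$, and then asserts uniformity. For (3), the paper uses the simpler bound $(1-a_T)\mu^T/\tilde\mu^T(\frac tT)\le (1-a_T)\sup_T\mu^T/c$.

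The gap is the last step of Claim (2). You correctly show that, uniformly on $[0,t_0]$, the rescaled quantity converges to $\phi(t)\bigl(1-F(t)\bigr)$. You then leave its identification with $\phi(t)-(f_{\alpha,\lambda}*\phi)(t)$ open (``should give'', with a fallback to a different limit). As written, the stated claim is therefore not proved.

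The missing observation is that the standing hypotheses force $\phi$ to be constant. Set $g^T(t):=T^\alpha a_T(1-a_T)/\tilde\mu^T(\frac tT)$ and write
\[
g^T(t)=\frac{T^\alpha a_T(1-a_T)}{\mu^T}\cdot\frac{\mu^T}{\tilde\mu^T(\frac tT)} .
\]
The first factor does not depend on $t$. The second tends to $\theta_0\in(0,\infty)$ for every $t>0$; this is the same hypothesis you use in Claim (3). Hence $\phi(t)/\phi(t')=\lim_T g^T(t)/g^T(t')=1$, so by positivity and continuity $\phi$ is constant on $[0,t_0]$. Then $(f_{\alpha,\lambda}*\phi)(t)=\phi\,F(t)$, and your product argument finishes (2).

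Your suspicion that the stated limit is false for a genuinely time-dependent $\phi$ is correct. The alternative you propose, keeping $\tilde\mu^T(\frac sT)$ inside the convolution, cannot get around this, because it is exactly the paper's route. The paper writes the quantity as
\[
g^T(t)-a_T^{-1}(f^T*g^T)(t)+a_T^{-1}\int_0^t f^T(t-s)\bigl(g^T(s)-g^T(t)\bigr)\,ds .
\]
It sends $a_T^{-1}f^T*g^T$ to $f_{\alpha,\lambda}*\phi$ via Proposition~\ref{prop:stability}, and kills the last term with Lemma~\ref{Lm:cvgcezero}(1). But that last term actually converges to $\int_0^t f_{\alpha,\lambda}(t-s)\bigl(\phi(s)-\phi(t)\bigr)\,ds$. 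So, given the other assumptions, Lemma~\ref{Lm:cvgcezero}(1) is equivalent to $\phi$ being constant. Its proof rests on a $T$-uniform Lipschitz bound for $\tilde\mu^T$ on $[0,t_0/T]$, which is the quantitative form of your ``vanishing oscillation'' remark.

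The paper's argument thus uses the constancy of $\phi$ implicitly. Once you make it explicit, your shorter product argument is the more transparent proof.
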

\noindent We must now proceed with a rescaling in time and a well-chosen renormalization in space in order to obtain a non-degenerate limit. First, we rescale the intensity in the %unit
time interval \( [0, t_0] \) by setting:
{\small 
	\[
	\Lambda^T_{tT} = \Lambda_0^{*T} \, (\mathit{\Phi}^T(tT) + (R^{\prime T}_{-1}*\mathit{\Phi}^T)(tT)) + \mu^T(tT) + \int_0^{tT} R^{\prime T}_{-1}(Tt - s) \mu^T(s) \, ds + \int_0^{Tt} R^{\prime T}_{-1}(Tt - s) \, dM^T_s, \quad \forall t \in [0, t_0].
	\]
}
\noindent Next, in terms of scaling in space, we assume that the inhomogeneous Poisson intensity \( \mu^T(\cdot) \) is of order \( \mu^T(0) = \mu^T \). A natural multiplicative factor is \( \omega_T = \frac{1 - a_T}{\mu^T} \) as in \cite{JaissonRosenbaum2016, ElEuchFukasawaRosenbaum2018}. This choice is motivated by the fact that, in the stationary regime, if we approximate the expected number of descendants \( a = \mathit{\Phi}(0) \) in the stability condition by \( a_T \) (this choice is legitimate since the sequence \( (a_T)_T \) converges to 1), the expectation of \( \Lambda^T \) (and thus the order of magnitude of the intensity \( \Lambda^T \)) is \( \frac{\mu^T}{1 - a_T} \) (see e.g. Proposition~\ref{prop:stat}~(2)).

\medskip
\noindent However, we aim at inducing a time-modulated volatility in the limiting diffusion, encoded via a deterministic non-negative bounded borel function \(\varsigma\). To this end, we slightly modify the rescaling term by introducing a time-dependent multiplicative factor \( \omega_T(t) = \frac{1 - a_T}{\tilde\mu^T(\frac{t}{T})} \) in the normalization for a well chosen function \(\tilde\mu^T\) (see. Assumption~\ref{assum:mu} below). 

\medskip
\noindent Intuitively, from a financial modeling standpoint, this approach is motivated by extensive empirical evidence of time-varying market activity. To account for local fluctuations in liquidity and trading intensity, we therefore introduce a time-dependent normalization factor that reflects the slowly varying average level of market activity. Define \( \Lambda^{*T}_t := \omega_T(t) \Lambda^T_{tT} \) so that after obvious computations:

%\begin{align*}
%	&\Lambda^{*T}_t =  \Lambda_0^{*T} \, (\mathit{\Phi}^T(tT) + (R^{\prime T}_{-1}*\mathit{\Phi}^T)(tT))\frac{1 - a_T}{\tilde\mu^T(\frac{t}{T})} + \frac{\mu^T}{\tilde\mu^T(\frac{t}{T})}\int_0^{t} T(1 - a_T) R^{\prime T}_{-1}\left(T(t - s)\right) \frac{\mu^T(Ts)}{\mu^T} \, ds \\
%	&\quad \hspace{3cm} + (1 - a_T)\frac{\mu^T(tT)}{\tilde\mu^T(\frac{t}{T})} + \int_0^{t}(1 - a_T)R^{\prime T}_{-1}\left(T(t - s)\right) \, dM^T_{Ts}, \quad \forall t \in [0, t_0]\\
%	&=\Lambda_0^{*T} \, (\mathit{\Phi}^T(tT) + (R^{\prime T}_{-1}*\mathit{\Phi}^T)(tT))\frac{1 - a_T}{\tilde\mu^T(\frac{t}{T})} + \frac{\mu^T}{\tilde\mu^T(\frac{t}{T})}\int_0^{t} T(1 - a_T) R^{\prime T}_{-1}\left(T(t - s)\right) \frac{\mu^T(Ts)}{\mu^T} \, ds \\
%	&\quad \hspace{3cm}+(1 - a_T)\frac{\mu^T(tT)}{\tilde\mu^T(\frac{t}{T})}  + \int_0^{t}T(1-a_T)  R^{\prime T}_{-1}\left(T(t - s)\right) \, \sqrt{\Lambda^{*T}_s }\sqrt{\frac{\tilde\mu^T(\frac{s}{T})}{T(1 - a_T)}}\frac{dM^T_{Ts}}{\sqrt{T\Lambda^T_{sT}}},
%\end{align*}
\begin{align*}	&\Lambda^{*T}_t =  \Lambda_0^{*T} \, (\mathit{\Phi}^T(tT) + (R^{\prime T}_{-1}*\mathit{\Phi}^T)(tT))\frac{1 - a_T}{\tilde\mu^T(\frac{t}{T})} + \frac{\mu^T}{\tilde\mu^T(\frac{t}{T})}\int_0^{t} T(1 - a_T) R^{\prime T}_{-1}\left(T(t - s)\right) \frac{\mu^T(Ts)}{\mu^T} \, ds \\
	&\quad \hspace{3cm} + (1 - a_T)\frac{\mu^T(tT)}{\tilde\mu^T(\frac{t}{T})} + \int_0^{t}(1 - a_T)R^{\prime T}_{-1}\left(T(t - s)\right) \, dM^T_{Ts}, \quad \forall t \in [0, t_0]\\
	&=\Lambda_0^{*T} \, (\mathit{\Phi}^T(tT) + (R^{\prime T}_{-1}*\mathit{\Phi}^T)(tT))\frac{1 - a_T}{\tilde\mu^T(\frac{t}{T})} + \frac{\mu^T}{\tilde\mu^T(\frac{t}{T})}\int_0^{t} T(1 - a_T) R^{\prime T}_{-1}\left(T(t - s)\right) \frac{\mu^T(Ts)}{\mu^T} \, ds \\
	&\quad \hspace{3cm}+(1 - a_T)\frac{\mu^T(tT)}{\tilde\mu^T(\frac{t}{T})}  + \int_0^{t}T(1-a_T)  R^{\prime T}_{-1}\left(T(t - s)\right) \, \sqrt{\Lambda^{*T}_s }\sqrt{\frac{\tilde\mu^T(\frac{s}{T})}{T(1 - a_T)}}\frac{dM^T_{Ts}}{\sqrt{T\Lambda^T_{sT}}}, \\
	&= \Lambda_0^{*T} \, (\mathit{\Phi}(tT) + (R^{\prime T}_{-1}*\mathit{\Phi})(tT))\frac{a_T(1 - a_T)}{\tilde\mu^T(\frac{t}{T})}+ \frac{\mu^T}{\tilde\mu^T(\frac{t}{T})}\int_0^{t} T(1 - a_T) R^{\prime T}_{-1}\left(T(t - s)\right) \frac{\mu^T(Ts)}{\mu^T} \, ds \\
	&\quad \hspace{2.7cm} + (1 - a_T)\frac{\mu^T(tT)}{\tilde\mu^T(\frac{t}{T})}  +  \int_0^{t}\underbrace{\sqrt{\frac{\tilde\mu^T(\frac{s}{T})}{T(1 - a_T)(\tilde\mu^T(\frac{t}{T}))^2}}}_{\to \nu\frac{ \varsigma(s)}{\lambda}  } \underbrace{ T(1 - a_T)R^{\prime T}_{-1}\left(T(t - s)\right)}_{f^{T}(t-s) \to  f_{\alpha,\lambda}(t-s)} \, \sqrt{\Lambda^{*T}_s }dW^T_{s}
\end{align*}
where we set, $W^T_{t} := \frac{1}{\sqrt{T}}\int_0^{tT} \frac{dM^T_{s}}{\sqrt{\Lambda^T_{s}}}, \quad \forall t \in [0, t_0].$
The sequence of processes \( (W^T_\cdot)_T \) was selected specifically because its corresponding sequence of quadratic variations converges to the identity. In fact, computing the quadratic variation or bracket of $W^T$ give: \( \langle W^T \rangle_t = \frac{1}{T}\int_0^{tT} \frac{d\langle M^T \rangle_{s}}{\Lambda^T_{s}} = \frac{1}{T}\int_0^{tT} ds =t \). As a result, by Levy's characterization of Brownian motion \( W^T \) converges to a Brownian motion \(W\).
\begin{assumption}\label{assum:mu}
	\begin{enumerate}
		\item There exists a deterministic function \( \theta \), continuous on \( \mathbb{R}_+^* \), satisfying
		\begin{equation}\label{eq:theta-lower}
			\forall t > 0, \quad \theta (t) \geq \theta_0\left(1 - \frac{1}{\int_0^t \varphi^T(s) \, ds}\right), \; \text{and}\; \exists K > 0 \text{ such that } \forall u \in (0, t_0], \quad \theta (u) \leq K
		\end{equation}
		\item The baseline intensity $ \mu^T(t)$ is given by
		$\mu^T(t) = \mu^T \zeta^T(t)$ with $\zeta^T(t) =   1 -\int_0^t \varphi^T(t-u)\big(1-\frac{\theta(\frac{u}{T}) }{\theta_0}\big)  du.$
	\end{enumerate} 
\end{assumption}
 \noindent {\bf Remark and discussions on assumption \ref{assum:mu}.}
 
	\medskip
		\noindent $1.$ Assumption \ref{assum:mu} (1) on $\mu^T(t)$ guarantees that $\mu^T(t)$ is a positive function, ensuring that the intensity process $\Lambda_t^T$ defined in \eqref{eq:intensity} is well-posed. In fact, set \(\bar\theta(\frac{\cdot}{T})= \frac{\theta(\frac{\cdot}{T}) }{\theta_0}\) and assume that for some \(M>0\), \( \; 1-\bar\theta(t) < M \) \(\forall t >0\),  then we have
	%	\centerline{$\int_0^t \varphi^T(t - u)(1 - \bar\theta(\frac{u}{T})) \, du < M \int_0^t \varphi^T(t - u) \, du = M \int_0^t \varphi^T(s) \, ds.$}
		\(	\int_0^t \varphi^T(t - u)(1 - \bar\theta(\frac{u}{T})) \, du < M \int_0^t \varphi^T(t - u) \, du = M \int_0^t \varphi^T(s) \, ds.\)
		Consequently, \(\zeta^T(t) > 1 - M \int_0^t \varphi^T(s) \, ds.\)
		In order to ensure that $\zeta^T(t) > 0$, it suffices that
	
		$\hspace{2cm}\qquad 1 - M \int_0^t \varphi^T(s) \, ds > 0 \quad \text{which is equivalent to}\quad M < \frac{1}{\int_0^t \varphi^T(s) \, ds}.$
		
		\medskip
		\noindent Consequently, \(\forall t > 0, \quad \bar\theta(t) > 1 - \frac{1}{\int_0^t \varphi^T(s) \, ds} = -\frac{\mathit{\Phi}^T(t)}{1-\mathit{\Phi}^T(t)}> - \mathit{\Phi}(t) \equiv - K t^{-\alpha}.\) For a detailed explanation of why this particular function was chosen, the reader is referred to Appendix \ref{app:choose_mu}.
		
	\medskip
	\medskip
	\noindent $2.$ The upper bound in equation~\eqref{eq:theta-lower}, assures that there exists a constant C s.t. $\forall t > 0, \quad \zeta^T(t) \leq C.$
\medskip
\noindent
With this non-homogenous baseline intensity $\mu^T(t)$ satisfying Assumption \ref{assum:mu}, the time-modulated renormalized intensity becomes:
\begin{align}\nonumber
	&\ \Lambda^{*T}_t = \Lambda_0^{*T} \, (\mathit{\Phi}^T(tT) + (R^{\prime T}_{-1}*\mathit{\Phi}^T)(tT))\frac{1 - a_T}{\tilde\mu^T(\frac{t}{T})} + \frac{\mu^T}{\theta_0\tilde\mu^T(\frac{t}{T})}\int_0^{t} T(1 - a_T) R^{\prime T}_{-1}\left(T(t - s)\right) \theta(s) \, ds \\
	&\quad \hspace{3cm} + (1 - a_T)\frac{\mu^T}{\tilde\mu^T(\frac{t}{T})} + \int_0^{t}(1 - a_T)R^{\prime T}_{-1}\left(T(t - s)\right) \, dM^T_{Ts}, \quad \forall t \in [0, t_0] \label{eq:form1}
	\\ \nonumber
	 &\hspace{.5cm}=  \Lambda_0^{*T} \, (\mathit{\Phi}(tT) + (R^{\prime T}_{-1}*\mathit{\Phi})(tT))\frac{a_T(1 - a_T)}{\tilde\mu^T(\frac{t}{T})}  + \frac{\mu^T}{\theta_0\tilde\mu^T(\frac{t}{T})}\int_0^{t} T(1 - a_T) R^{\prime T}_{-1}\left(T(t - s)\right) \theta(s) \, ds \\ 
	&\quad \hspace{1 cm}+ (1 - a_T)\frac{\mu^T}{\tilde\mu^T(\frac{t}{T})} +  \int_0^{t}\underbrace{\sqrt{\frac{\tilde\mu^T(\frac{s}{T})}{T(1 - a_T)(\tilde\mu^T(\frac{t}{T}))^2}}}_{\to \nu\frac{ \varsigma(s)}{\lambda}  } \underbrace{ T(1 - a_T)R^{\prime T}_{-1}\left(T(t - s)\right)}_{f^{T}(t-s) \to f_{\alpha,\lambda}(t-s)} \, \sqrt{\Lambda^{*T}_s }dW^T_{s} \label{eq:form2}
\end{align}
From the above equation~\eqref{eq:form2}, we observe that the asymptotic behavior of the rescaled intensity $\Lambda^{*T}_t$ is closely tied to that of \( u \to T(1 - a_T) R^{\prime T}_{-1}(Tu) \), whose limiting behavior is provided by Proposition \ref{Lem:Lhawkes}.
Therefore, taking the limit as \( T \to \infty \) and owing to Assumption \ref{assum:Lhawkes} and Assumption \ref{assum:mu} we expect the limiting process \( \Lambda^{*}_t \) to be at least heuristically the solution of the following stochastic volterra integral equation:
\vspace{-0.4cm}
\begin{equation}\label{eq:limitInt} 
	\Lambda^{*}_t= \Lambda^{*}_0\big(\phi(t)- \int_0^t f_{\alpha,\lambda}(t-s)\phi(s)ds\big) + \int_0^t f_{\alpha, \lambda}(t-s)\theta(s)ds + \frac{\nu}{\lambda}\int_0^t f_{\alpha, \lambda}(t-s)\varsigma(s) \sqrt{\Lambda^{*}_s}dW_s.
\end{equation}
	As a straightforward consequence of Proposition~\ref{prop:wiener_hopf} in the particular case of the \( \alpha-\)fractional kernel \( K(t) = K_{\alpha}(t)\), any solution to the stochastic Volterra equation~\eqref{eq:limitInt} can be equivalently represented as
	\vspace{-0.3cm}
	\begin{equation}\label{eq:limitIntAffine2}
		\Lambda^{*}_t = \Lambda^{*}_0\phi(t) +\int_0^t K_{\alpha}(t-s)\lambda(\theta(s)- \Lambda^{*}_s)ds + \nu \int_0^t K_{\alpha}(t-s)\varsigma(s) \sqrt{\Lambda^{*}_s}dW_s, \quad \Lambda^{*}_0\perp\!\!\!\perp W,\quad t \geq 0.
	\end{equation}
	where $K_{\alpha}:= u\to \frac{u^{\alpha - 1}}{\Gamma(\alpha)} \mathbf{1}_{\mathbb{R}_+}(t), \;  \alpha \in (\frac12,1)$ is the \( \alpha-\)fractional integration kernel.\\

%\noindent {\bf Proof:} This is a direct consequence of Proposition \ref{prop:wiener_hopf} for the particular case of \( \alpha-\)fractional integration kernel \( K(t) = K_{\alpha}(t), \;  \alpha \in (\frac12,1)\).

\noindent {\bf Remark:} {\em Convergence of rescaled stationary Hawkes processes to the inhomogeneous Volterra CIR Equation.}
	By construction, when, $\theta = C^{\text{ste}} = \theta_0$, the baseline intensity becomes constant over time, $\mu^T(t) = C^{\text{ste}} = \mu^T$ for all $t > 0$. Consequently, as detailed in Proposition \ref{prop:stat}, Section \ref{subsec:PrelimStationary}, the resulting Hawkes process exhibits a constant mean. In this scenario, the limiting equation precisely corresponds to an inhomogeneous Volterra Cox-Ingersoll-Ross (CIR) process, which can be expressed as:
	\vspace{-0.3cm}
	\begin{equation}\label{eq:frac_CIR1}
		\Lambda^{*}_t= \Lambda^{*}_0\big(\phi(t)- \int_0^t f_{\alpha,\lambda}(t-s)\phi(s)ds\big) + \theta_0 (1- R_{\alpha, \lambda}(t) )+ \frac{\nu}{\lambda}\int_0^t f_{\alpha, \lambda}(t-s)\varsigma(s) \sqrt{\Lambda^{*}_s}dW_s.
	\end{equation}
	Or equivalenty,  still owing to the Equation ~\eqref{eq:limitIntAffine2} above:
	\begin{equation}\label{eq:frac_CIR2}
	\Lambda^{*}_t = \Lambda^{*}_0 \phi(t) + \frac{1}{\Gamma(\alpha)} \int_0^t (t - s)^{\alpha - 1} \lambda(\theta_0 - \Lambda^{*}_s) \, ds + \frac{\nu}{\Gamma(\alpha)} \int_0^t (t - s)^{\alpha - 1} \varsigma(s) \sqrt{\Lambda^{*}_s} \, dW_s.
    \end{equation}
	Thus, we conclude that a well-normalized sequence of stationary (at least constant mean) Hawkes processes converges to a time-inhomogeneous rough fractional CIR process. Notably, if the function \( \varsigma \equiv 1 \), this formulation aligns with results found in \cite{el2019characteristic, HorstXuZhang2024}.
	
\vspace{-0.3cm}
\subsection{A priori estimates and $\mathcal C$-tightness}

 We aim to show that the sequence of rescaled intensity processes \( \left\{ \Lambda^{*T}_\cdot  : T \in \mathbb{R}^+ \right\} \) converges weakly to a  diffusion denoted \( \Lambda^{*}:=\left\{ \Lambda^{*}_t  : t \geq 0 \right\} \) and refered to as time-dependent stochastic Volterra CIR process, and the point process \( N^T \) behaves asymptotically as the integrated diffusion process of \(\Lambda^{*}\) for large \( T \in \mathbb{R}^+ \), when suitably rescaled. We will need the \(\mathcal{C}\)-tightness of the sequence \( (\Lambda^{*T})_{T > 0} \) and \( (\tilde{\mathcal{I}}^{\Lambda^{*T}})_{T > 0} \)
 
  \medskip
  \noindent We introduce the family of rescaled processes:
  \(\; X^T := \left\{ X^T_t =\left( \tilde{\mathcal{I}}_t^{\Lambda^{*T}}, \tilde{N}^{*T}_t, \tilde{M}^{*T}_t,  \tilde{M}_t \right) : t \geq 0 \right\}\) where \( \forall t > 0\):
	{\small
	\[\tilde{N}^{*T}_t =\frac{(1 - a_T)}{ T} \int_{0}^{tT} \frac{1}{\tilde\mu^T(\frac{s}{T^2})}dN^T_{s}, \quad \tilde{\mathcal{I}}_t^{\Lambda^{*T}} = \frac{(1 - a_T)}{ T}\int_0^{tT} \frac{1}{\tilde\mu^T(\frac{s}{T^2})}\Lambda^T_s ds, \quad %\textit{and} \quad \tilde{M}^T_t % =\sqrt{\frac{T\mu^T}{(1 - a_T)}}(\tilde{N}^T_t-\tilde{\mathcal{I}}_t^{\Lambda^T}) 
	\tilde{M}^{*T}_t = \sqrt{\frac{1 - a_T}{T}}\int_{0}^{tT} \frac{dM^T_{s}}{\sqrt{\tilde\mu^T(\frac{s}{T^2})}}
	\]
    }  
    \noindent and \( \tilde M^{T}_t := \int_{0}^{t} \varsigma^T(s) \, d\tilde M^{*T}_s
    \) where  we set
    \( \varsigma^T(t) := \frac{\lambda}{\nu}\sqrt{\frac{1}{T(1 - a_T)\tilde\mu^T\left(\frac{t}{T}\right)}} \)  so that \(\tilde M^{T}_t =\frac{\lambda}{\nu T}\int_{0}^{tT} \frac{dM^T_{s}}{\tilde\mu^T(\frac{s}{T^2})}.\)
  
\medskip
\noindent In this section we establish the $ \mathcal C$-tightness\footnote{In this framework, a sequence of processes is said to be $\mathcal{C}$-tight if it does not escape to infinity and all of its possible weak limits have continuous trajectories. This property is crucial for establishing convergence to a regular limiting dynamic.
} of the sequence of rescaled  processes $X^T $ (i.e. tightness for the Skorokhod \(J_1\)
topology), with continuous limits and hence the
existence of a weak continuous accumulation point (Prokhorov's theorem). Before establishing the \( \mathcal{C} \)-tightness, note that \(\tilde{\mathcal{I}}^{\Lambda^{*T}}_t\) is obtained by integrating both side of equation~\eqref{eq:form1} over \([0, t]\) which yield:
\(\tilde{\mathcal{I}}^{\Lambda^{*T}}_t = \mathcal{I}^{I^T}_t + \mathcal{I}^{J^T}_t + \mathcal{I}^{K^T}_t \) with	
{\small 
	\begin{align*}
		&\, \mathcal{I}^{I^T}_t = \int_0^t \Lambda_0^{*T} \left(\Phi^T(sT) + (R^{\prime T}_{-1}*\Phi^T)(sT)\right) \frac{1 - a_T}{\tilde\mu^T(\frac{s}{T})} \, ds + \int_0^t (1 - a_T)\frac{\mu^T(sT)}{\tilde\mu^T(\frac{s}{T})} \, ds =\int_0^t (1 - a_T)\frac{\mu^T(sT)}{\theta_0\tilde\mu^T(\frac{s}{T})} \, ds  \\
		&\hspace{3cm}+ T^{-\alpha}\Lambda_0^{*T} \left(\int_0^t  T^{\alpha}\frac{a_T(1 - a_T)}{\tilde\mu^T(\frac{s}{T})}ds  - \int_0^t T^{\alpha}\frac{(1 - a_T)}{\tilde\mu^T(\frac{s}{T})} \int_0^s T (1 -a_T)R^{\prime T}_{-1}(T(s-u)) du ds\right); \\	
		& \mathcal{I}^{J^T}_t= \int_0^t \frac{\mu^T}{\theta_0\tilde\mu^T(\frac{s}{T})}  \int_0^s T(1 - a_T) R^{\prime T}_{-1}(T(s - u)) \theta(u) \, du  ds \;, \; \mathcal{I}^{K^T}_t= \int_0^t \frac{\mu^T}{\tilde\mu^T(\frac{s}{T})}  \int_0^s \frac{1 - a_T}{\mu^T} R^{\prime T}_{-1}(T(s - u)) \, dM^T_{Tu}  ds
	\end{align*}
}

\noindent For the term \(K^T_t\), we have the following decomposition (up to the multiplicative factor \(1 - a_T\)):
{\small
	\begin{align*}
		\frac{1}{\tilde\mu^T(\frac{t}{T})}  \int_0^t  R^{\prime T}_{-1}(T(t - s)) \, dM^T_{Ts} = \underbrace{ \int_0^t R^{\prime T}_{-1}(T(t - s))\big(\frac{1}{\tilde\mu^T(\frac{t}{T})}-\frac{1}{\tilde\mu^T(\frac{s}{T})}\big) \, dM^T_{Ts}}_{K^T_{1,t}} + \underbrace{\int_0^t  R^{\prime T}_{-1}(T(t - s))\frac{dM^T_{Ts}}{\tilde\mu^T(\frac{s}{T})} }_{K^T_{2,t}}.
	\end{align*}
}
\noindent so that  \(\mathcal{I}^{K^T}_t= \mathcal{I}^{K^T_1}_t + \mathcal{I}^{K^T_2}_t, \) 	with \(\mathcal{I}^{K^T_1}_t = \int_0^t (1 - a_T) \left[ \int_0^s  R^{\prime T}_{-1}(T(s - u)) \, \big(\frac{1}{\tilde\mu^T(\frac{s}{T})}-\frac{1}{\tilde\mu^T(\frac{u}{T})}\big) \, dM^T_{Tu} \,\right] ds\) and \(\mathcal{I}^{K^T_2}_t = \int_0^t \int_0^s (1 - a_T)  R^{\prime T}_{-1}(T(s - u))\frac{dM^T_{Tu}}{\tilde\mu^T(\frac{u}{T})}\,ds.\) We have the following uniform convergence on \([0,t_0]\):

\begin{proposition}\label{prop:uniformCvgce} As \( T \to \infty \), we have the uniform almost sure convergences on \([0,t_0]\): \(\quad \mathcal{I}^{K^T_1}_t \underset{T \to +\infty}{\rightarrow} 0\)
		
		\centerline{$
			\mathcal{I}^{I^{T}}_t \underset{T \to +\infty}{\rightarrow} 
			\int_0^t \Lambda^*_0 \left(\phi(s)- \int_0^s f_{\alpha,\lambda}(s-u)\,\phi(u)\,du \right)ds,\qquad
			\mathcal{I}^{J^{T}}_t \underset{T \to +\infty}{\rightarrow} 
			\int_0^t \int_0^s f_{\alpha,\lambda}(s-u)\,\theta(u)\,du\,ds.
			$}
		In particular, the sequence \(\{\mathcal{I}^{I^{T}} + \mathcal{I}^{J^{T}} + \mathcal{I}^{K^T_1}\}_{T>0}\) is \(\mathcal{C}\)-tight. % in \(\mathcal{C}([0,t_0])\).
\end{proposition}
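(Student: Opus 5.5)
We treat the three terms separately. Each one is written as a deterministic (or $\mathcal F_0$-measurable) functional to which Corollary~\ref{Corol:LimitMeasure} and Proposition~\ref{Lem:Lhawkes} apply directly. The only genuinely random terms are $\Lambda_0^{*T}$, handled by Assumption~\ref{assum:Lhawkes}(3), and the martingale term $\mathcal{I}^{K^T_1}$. Uniform convergence on $[0,t_0]$ of the integrated quantities follows from uniform convergence, or uniform boundedness plus pointwise convergence, of the integrands together with dominated convergence, since $\sup_{t\le t_0}|\int_0^t (g_T-g)\,ds|\le \int_0^{t_0}|g_T-g|\,ds$.

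\textbf{The term $\mathcal{I}^{J^T}$.} We write
\[
\frac{\mu^T}{\theta_0\tilde\mu^T(s/T)}=\frac{1}{\theta_0}\cdot\frac{\mu^T}{\tilde\mu^T(s/T)}\to 1
\]
by Assumption~\ref{assum:Lhawkes}(2). By Corollary~\ref{Corol:LimitMeasure}(1),
\[
\sup_s\Big|\int_0^s f^T(s-u)\theta(u)\,du-\int_0^s f_{\alpha,\lambda}(s-u)\theta(u)\,du\Big|\to 0 .
\]
Since $\theta\le K$ and $\|f^T\|_{L^1}\le 1$ (because $(1-a_T)\int_0^\infty R'^T_{-1}=a_T$), the integrands are uniformly bounded. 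Integrating in $s$ gives uniform convergence of $\mathcal{I}^{J^T}$. One point needs checking: the prefactor converges uniformly in $s\in[0,t_0]$. This holds because $\tilde\mu^T$ is Lipschitz on $[0,t_0/T]$, so $\tilde\mu^T(s/T)-\tilde\mu^T(0)=O(1/T)$, while $\tilde\mu^T$ is bounded below by the remark following Assumption~\ref{assum:Lhawkes}.

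\textbf{The term $\mathcal{I}^{I^T}$.} The deterministic piece $\int_0^t (1-a_T)\mu^T(sT)/(\theta_0\tilde\mu^T(s/T))\,ds$ tends to $0$ uniformly. This follows from Corollary~\ref{Corol:LimitMeasure}(3) together with the bound $\zeta^T\le C$ from Assumption~\ref{assum:mu}. For the random piece we use Corollary~\ref{Corol:LimitMeasure}(2): the bracket $T^\alpha\frac{1-a_T}{\tilde\mu^T(s/T)}\big(\Phi^T+R'^T_{-1}*\Phi^T\big)(sT)$ converges uniformly to $\phi(s)-(f_{\alpha,\lambda}*\phi)(s)$. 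It remains to multiply by $T^{-\alpha}\Lambda_0^{*T}\to\Lambda_0^*$. Assumption~\ref{assum:Lhawkes}(3) only gives weak convergence, so "almost sure" has to be read after a Skorokhod representation; we invoke Skorokhod's representation theorem to realize $T^{-\alpha}\Lambda_0^{*T}\to\Lambda_0^*$ almost surely on a common probability space. The product of an a.s.\ convergent scalar and a deterministic uniformly convergent function is a.s.\ uniformly convergent, which gives the claim.

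\textbf{The term $\mathcal{I}^{K^T_1}$, the main obstacle.} This is a stochastic integral. We bound it in $L^2$ and then upgrade to uniformity. By Fubini,
\[
\mathcal{I}^{K^T_1}_t=\int_0^t\Big(\int_u^t (1-a_T)R'^T_{-1}(T(s-u))\,\Delta^T(s,u)\,ds\Big)\,dM^T_{Tu},
\qquad \Delta^T(s,u):=\frac{1}{\tilde\mu^T(s/T)}-\frac{1}{\tilde\mu^T(u/T)} .
\]
The Lipschitz property and the lower bound $c$ give $|\Delta^T(s,u)|\le L|s-u|/(c^2T)$. The inner deterministic kernel is therefore bounded by $\frac{Lt_0}{c^2 T^2}\int_0^{t_0} f^T$, which is $O(T^{-2})$. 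The bracket satisfies $d\langle M^T_{T\cdot}\rangle_u=T\Lambda^T_{Tu}\,du$, and
\[
\mathbb E[\Lambda^T_{Tu}]\lesssim \frac{\tilde\mu^T}{1-a_T}\,\mathbb E[\Lambda^{*T}_u]=O\big(T^{\alpha}\tilde\mu^T\big)
\]
uniformly in $u$, using~\eqref{eq:IntLam} and the convergences already proved. Combining these, $\mathbb E\big|\mathcal{I}^{K^T_1}_t\big|^2=O\big(T^{-4}\cdot T\cdot T^{\alpha}\tilde\mu^T\big)\to 0$.

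To get uniformity in $t$, we write the increment $\mathcal{I}^{K^T_1}_t-\mathcal{I}^{K^T_1}_{t'}$, which gains a factor $|t-t'|$ in the same estimate. Kolmogorov's criterion then gives uniform convergence in probability, and almost sure convergence along subsequences or via the Skorokhod representation. A cruder alternative is Doob's inequality applied to the martingale $u\mapsto\int_0^u(\cdot)\,dM$ after bounding the kernel uniformly in $t$. \textbf{$\mathcal C$-tightness} then follows: the sum of the three terms converges uniformly to a continuous limit, since $\phi$ and $\theta$ are bounded and $f_{\alpha,\lambda}\in L^1$.
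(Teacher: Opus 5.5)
Your proposal is correct and follows essentially the same route as the paper. Corollary~\ref{Corol:LimitMeasure} handles $\mathcal{I}^{I^T}$ and $\mathcal{I}^{J^T}$; for $\mathcal{I}^{K^T_1}$ you use the same stochastic Fubini rewriting, the same Lipschitz gain $|\Delta^T(s,u)|\lesssim |s-u|/T$, and the same second-moment (BDG) bound as in Lemma~\ref{Lm:cvgcezero}(2). The differences are minor. You obtain uniformity in $t$ directly from the increment estimate (or, more simply, from $\sup_{t\le t_0}|\mathcal{I}^{K^T_1}_t|\le\int_0^{t_0}|K^T_{1,s}|\,ds$), rather than through the paper's pointwise Borel--Cantelli argument combined with $\mathcal{C}$-tightness of $K^T_1$. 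You also make explicit the Skorokhod coupling needed for ``almost sure'' to be meaningful for $T^{-\alpha}\Lambda_0^{*T}$, since the processes live on different spaces $\Omega^T$; the paper leaves this implicit.
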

\noindent {\bf Proof:}
The first convergence follows from the claim 2 of Lemma \ref{Lm:cvgcezero}. The last two claims are straightforward owing to Corollary \ref{Corol:LimitMeasure} and Assumption \ref{assum:Lhawkes} (1). The \(\mathcal{C}\)-tightness is straightforward thanks to \cite[Corollary~VI.3.33(1), p.353]{jacod2013limit}.
% Corollary~VI.3.33 (1) in \cite[p.353]{jacod2013limit}
\hfill $\Box$

\smallskip
\noindent In the proposition below, we provide a priori estimates that will be important to our subsequent analysis, particularly in proving the tightness of our sequence of processes.
\begin{proposition}[A priori estimates]\label{prop:estim}
	There exist constants \( C_1, C_2 > 0 \) such that both the renormalized Hawkes process 
	\(\tilde{N}^{*T}\) and the integrated renormalized intensity process \(\tilde{\mathcal{I}}^{\Lambda^{*T}}\) satisfy, for any \( t \geq 0 \),
	\[\sup_{T>0} \mathbb{E}\bigl[\tilde{N}^{*T}_t\bigr] 
	= \sup_{T>0} \mathbb{E}\bigl[\tilde{\mathcal{I}}^{\Lambda^{*T}}_t\bigr] 
	< C_1.\]
	\noindent Moreover, the stochastic integral terms \(\tilde{M}^{*T}\) and \(\tilde{M}^{T}\) satisfy the moment bound 
	\[\sup_{T>0} \mathbb{E}\Bigl[\sup_{t \leq t_0} \bigl| \tilde{M}^{*T}_t \bigr|\Bigr] + \sup_{T>0} \mathbb{E}\Bigl[\sup_{t \leq t_0} \bigl| \tilde{M}^{T}_t \bigr|\Bigr] 
	\leq C_2.\]
\end{proposition}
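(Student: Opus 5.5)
The proof splits into the first-moment bound and the martingale bound.

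\emph{Step 1: the first-moment bound.} Since $M^T$ is a martingale, $N^T-\int\Lambda^T ds$ has zero mean. The integrands $1/\tilde\mu^T(\cdot/T^2)$ are deterministic and bounded, because of the uniform lower bound $\tilde\mu^T\ge c$ recorded in the remarks on Assumption~\ref{assum:Lhawkes}. Hence $\mathbb{E}[\tilde N^{*T}_t]=\mathbb{E}[\tilde{\mathcal I}^{\Lambda^{*T}}_t]$, and it suffices to bound the latter.

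I would take expectations in the decomposition $\tilde{\mathcal I}^{\Lambda^{*T}}=\mathcal I^{I^T}+\mathcal I^{J^T}+\mathcal I^{K^T}$. The stochastic term $\mathcal I^{K^T}$ has zero mean by a stochastic Fubini argument, since its integrand is deterministic against $dM^T$. The remaining terms are deterministic, apart from the factor $T^{-\alpha}\Lambda_0^{*T}$, whose mean is bounded by Assumption~\ref{assum:Lhawkes}(3), using $\mathbb{E}[\Lambda_0^{*T}]$ in place of weak convergence.

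For $\mathcal I^{I^T}$, I would use Corollary~\ref{Corol:LimitMeasure}(2), which gives uniform convergence of $T^\alpha\frac{1-a_T}{\tilde\mu^T}(\Phi^T+R'^T_{-1}*\Phi^T)(tT)$. Together with $\Phi^T+R'^T_{-1}*\Phi^T\le a_T\le1$ and Corollary~\ref{Corol:LimitMeasure}(3), this gives a bound uniform in $T$ large.

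For $\mathcal I^{J^T}$, I would use $\theta\le K$ and $\mu^T/\tilde\mu^T(s/T)\to\theta_0$, hence bounded. Also $\int_0^s T(1-a_T)R'^T_{-1}(T(s-u))\,du\le (1-a_T)\|R'^T_{-1}\|_1=a_T\le1$, because $\|\Psi_{\varphi^T}\|_1=a_T/(1-a_T)$.

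Small $T$ is handled by the crude bound $\mathbb{E}[\Lambda^T]\lesssim$ const$/(1-a_T)$ on the compact $T$-range. Alternatively, one can note that the statement is really $\limsup$-type and enlarge $C_1$ accordingly.

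\emph{Step 2: the martingale bound.} For $\tilde M^{*T}$ and $\tilde M^T$, I would apply Doob's $L^2$ inequality (or BDG with $p=1$) together with Cauchy--Schwarz:
\[
\mathbb{E}\Bigl[\sup_{t\le t_0}|\tilde M^{*T}_t|\Bigr]\le 2\,\mathbb{E}\bigl[\langle \tilde M^{*T}\rangle_{t_0}\bigr]^{1/2}.
\]
The bracket is $\langle\tilde M^{*T}\rangle_{t_0}=\frac{1-a_T}{T}\int_0^{t_0T}\frac{\Lambda^T_s}{\tilde\mu^T(s/T^2)}ds=\tilde{\mathcal I}^{\Lambda^{*T}}_{t_0}$, whose expectation is at most $C_1$ by Step 1.

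Similarly, $\langle\tilde M^T\rangle_{t_0}=\int_0^{t_0}(\varsigma^T)^2\,d\tilde{\mathcal I}^{\Lambda^{*T}}$. The coefficient $(\varsigma^T(s))^2=\frac{\lambda^2}{\nu^2 T(1-a_T)\tilde\mu^T(s/T)}$ converges by Assumption~\ref{assum:Lhawkes}(2) to $\varsigma(s)^2$ times $\frac{\lambda^2}{\nu^2}\cdot\frac{\nu^2\delta\varsigma^2}{\lambda\cdot\lambda\delta}$-type constants. It is therefore uniformly bounded, since $\varsigma$ is bounded and the convergence is uniform. This yields $\mathbb{E}[\langle\tilde M^T\rangle_{t_0}]\le C\,C_1$.

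\emph{Main obstacle.} The delicate point is the time-argument mismatch in the definitions: $\tilde\mu^T(s/T^2)$ in the processes versus $\tilde\mu^T(t/T)$ in $\varsigma^T$ and in $\Lambda^{*T}$. Controlling it requires the Lipschitz property of $\tilde\mu^T$ and its uniform positivity to compare $\tilde\mu^T$ at both arguments uniformly in $T$. I would first try to bound all ratios $\tilde\mu^T(x)/\tilde\mu^T(y)$ for $x,y\in[0,t_0/T]$ by a constant, using the Lipschitz bound and $\inf\tilde\mu^T\ge c$. With that comparison in place, the estimates in Steps 1 and 2 go through.
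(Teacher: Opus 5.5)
Your Step~2 is the paper's argument: Doob's $L^2$ inequality with Cauchy--Schwarz, $\langle \tilde M^{*T}\rangle=\tilde{\mathcal I}^{\Lambda^{*T}}$, and uniform boundedness of $\varsigma^T$. Your Step~1, however, takes a longer route than the paper.

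\textbf{Your route versus the paper's.} You split $\tilde{\mathcal I}^{\Lambda^{*T}}=\mathcal I^{I^T}+\mathcal I^{J^T}+\mathcal I^{K^T}$ and bound each mean through the asymptotic statements of Corollary~\ref{Corol:LimitMeasure}. That only controls large $T$, so you then patch small $T$ with a crude bound. The paper uses \emph{only} the crude bound and notes that it is already uniform in $T$:
\begin{itemize}
\item By the Neumann series, $1+(\mathbf{1}*R^{\prime T}_{-1})_t\le\sum_{k\ge0}a_T^k=\frac{1}{1-a_T}$.
\item Since $\mathbb{E}[\Lambda_0^{*T}]\,\Phi^T+\mu^T\zeta^T\le K$ uniformly in $T$, this gives $\mathbb{E}[\mathcal I^{\Lambda^T}_{tT}]\le\frac{K}{1-a_T}\,tT$.
\item The prefactor $\frac{1-a_T}{T}$ in $\tilde{\mathcal I}^{\Lambda^{*T}}$, together with $\tilde\mu^T\ge c$, cancels both the $\frac{1}{1-a_T}$ and the $T$.
\end{itemize}
The result is $\mathbb{E}[\tilde{\mathcal I}^{\Lambda^{*T}}_t]\le Kt/c$ for every $T>0$ in one line, with no split in $T$ and no limit theorem.

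Your decomposition buys finer information that the proposition does not need. It essentially identifies the limit of $\mathbb{E}[\tilde{\mathcal I}^{\Lambda^{*T}}_t]$. It also handles the initial-condition term more sharply than the Neumann-series bound, via the exact identity $\Phi^T+R^{\prime T}_{-1}*\Phi^T=a_T-(1-a_T)\int_0^{\cdot}R^{\prime T}_{-1}\le a_T$. You should drop your fallback that the statement is ``really limsup-type'': that is not legitimate for a $\sup_{T>0}$ claim, and it is unnecessary once you see the crude bound is uniform.

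\textbf{Smaller corrections.}
\begin{itemize}
\item The ``main obstacle'' you identify does not exist. Substituting $s=Tu$ turns $\tilde\mu^T(s/T^2)$ into $\tilde\mu^T(u/T)$. Hence $\tilde{\mathcal I}^{\Lambda^{*T}}_t=\int_0^t\Lambda^{*T}_u\,du$ and $\tilde M^{T}_t=\int_0^t\varsigma^T(u)\,d\tilde M^{*T}_u$ hold exactly, and the only property of $\tilde\mu^T$ needed is $\inf_{T}\inf\tilde\mu^T\ge c$.
\item Assumption~\ref{assum:Lhawkes}(3) is only weak convergence of $T^{-\alpha}\Lambda_0^{*T}$, so it gives no bound on $\mathbb{E}[T^{-\alpha}\Lambda_0^{*T}]$. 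You must invoke, as the paper does, the standing hypothesis that $\mathbb{E}[\Lambda_0^{*T}]$ is bounded in $T$, or else add uniform integrability.
\item Assumption~\ref{assum:Lhawkes}(2) gives $\frac{1}{T(1-a_T)\tilde\mu^T(t/T)}\to\frac{\nu^2\varsigma^2(t)}{\lambda^2}$. So $(\varsigma^T)^2\to\varsigma^2$ exactly, with no additional constants.
\end{itemize}
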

\noindent {\bf Proof:} Recalling equation~\eqref{eq:IntLam}, we have:

	 \(\qquad\hspace{3cm}\mathbb{E}[N^T_t] = \mathbb{E}[\mathcal{I}^{\Lambda^{T}}_t] 
= \left(1 + (\mathbf{1} * R^{\prime T}_{-1})_t \right) 
\int_0^t \left( \mathbb{E}[\Lambda_0^{*T}]\, \Phi^T(s) + \mu^T(s) \right) ds.\)\\
Futhermore, \(\; \forall t>0,\)
\[\mathbb{E}[\Lambda_0^{*T}]\, \Phi^T(t)+ \mu^T(t)= \mathbb{E}[\Lambda_0^{*T}]\, \mathit{\Phi}^T(t) +  \mu^T \zeta^T(t)  \leq K_1 a_T\mathit{\Phi}(0) +  K_2 \zeta^T(t) \leq K_1\mathit{\Phi}(0) +  K_2C=:K \]
where the first inequality comes from the fact that by assumption \(\mathbb{E}[\Lambda_0^{*T}]\) and \(\mu^T\)  are bounded for every \( T>0\), and the second follows from assumption \ref{assum:Lhawkes} (1) and the claim 2 in the Remark on Assumption \ref{assum:mu}.	Consequently, using that
$ \qquad\; 1 + (\mbox{\bf 1}*R^\prime_{-1})_t = \sum_{k\ge 0} (\mbox{\bf 1}*\varphi^{T*k})_t \leq \sum_{k \geq 0} \int_0^\infty (\varphi^T)^{*k} = \sum_{k \geq 0} (a_T)^k = \frac{1}{1-a_T},$	we obtain:
\begin{equation}\label{eq:bounds_I}
	\mathbb{E}[N_t]= \mathbb{E}[\mathcal{I}^{\Lambda^{T}}_t] \leq \frac{1}{1-a_T} \int_0^{t}\left( \mathbb{E}[\Lambda_0^{*T}]\, \Phi^T(s) + \mu^T(s) \right) ds \leq \frac{K}{1-a_T} t.
\end{equation}
Therefore since $\tilde\mu^T$ is uniformly bounded on \([0,t_0]\) (see the  the Remark (3) on assumption \ref{assum:Lhawkes}), we have some constant \(C_1\) such that
$$\mathbb{E}[\tilde{N}^{*T}_t]= \mathbb{E}[\tilde{\mathcal{I}}^{\Lambda^{*T}}_t]  \leq \frac{1-a_T}{T} \int_{0}^{tT} \frac{1}{\tilde\mu^T(\frac{s}{T^2})} \mathbb{E}\left[\Lambda^T_{s} \right] ds \leq \frac{1-a_T}{T c} \mathbb{E}[\mathcal{I}^{\Lambda^{T}}_{tT}]
= \frac{K }{c} t\leq \frac{K }{c} t_0 =: C_1.$$
Thus, $ \sup_{T>0} \mathbb{E}[\tilde{N}^{*T}_t]= \sup_{T>0} \mathbb{E}[\tilde{\mathcal{I}}^{\Lambda^{*T}}_t] <C_1.$ 
Moreover, since $\langle \tilde{M}^{*T}\rangle_t = \tilde{\mathcal{I}}^{\Lambda^{*T}}_t,$
the Burkholder--Davis--Gundy (BDG) inequality or the Doob’s maximal inequality ensures that
\[
\mathbb{E}\Bigl[\sup_{t \leq t_0} 
\bigl| \tilde{M}^{*T}_t \bigr|^2 \Bigr]
\leq 4\, \mathbb{E}\bigl[\langle \tilde{M}^{*T} \rangle_{t_0}\bigr]
= 4\, \mathbb{E}\bigl[\tilde{\mathcal{I}}^{\Lambda^{*T}}_{t_0}\bigr] \leq 4\,C_1 := C_1^* .\]
\noindent so that by Cauchy–Schwarz inequality, \(\mathbb{E}\Bigl[\sup_{t \leq t_0} | \tilde{M}^{*T}_t |\Bigr] \leq \sqrt{C_1^*}\).

\noindent Likewise, by Doob’s maximal inequality, and using the fact that the sequence 
$(\varsigma^T)_{T>0}$ is uniformly bounded on $[0,t_0]$ (which follows from 
Assumption~\ref{assum:Lhawkes} and Remark~(3) stated therein on the uniform boundedness of 
$\tilde\mu^T$ away from zero), we have
\[
\mathbb{E}\Bigl[\sup_{t \leq t_0} 
\bigl| \tilde{M}^{T}_t \bigr|^2 \Bigr]
\leq 4\, \mathbb{E}\bigl[\langle \tilde{M}^{T} \rangle_{t_0}\bigr]
= 4\, \mathbb{E}\bigl[\int_{0}^{t_0} (\varsigma^T)^2(s)\,d\tilde{\mathcal{I}}^{\Lambda^{*T}}_{s}\bigr] \leq 4\,\left(\frac{1}{T\,(1 - a_T)\,c}\right)\,\mathbb{E}\bigl[\tilde{\mathcal{I}}^{\Lambda^{*T}}_{t_0}\bigr] \leq  4\,C_\varsigma\, C_1 := C_2^* .\]
\noindent so that by Cauchy–Schwarz inequality, \(\mathbb{E}\Bigl[\sup_{t \leq t_0} | \tilde{M}^{T}_t |\Bigr] \leq \sqrt{C_2^*}\). 

\noindent Setting $C_2 := \sqrt{C_1^*} + \sqrt{C_2^*}$ then provides the desired uniform $L^1$-bound on both martingale terms. \hfill $\Box$

\medskip
\noindent We now turn to the main result of this section: establishing the tightness of our sequence of processes in the Skorokhod topology. By $\mathcal C$-tightness\footnote{A tight sequence of processes is called $\mathcal C$-tight if any weak accumulation point is continuous.}, we will be refering to tightness with almost surely continuous limiting processes; see \cite[Definition VI.3.25, p.351]{jacod2013limit}.

\begin{proposition}[$\mathcal{C}$-tightness of the process \( X^T = \left( \tilde{\mathcal{I}}^{\Lambda^{*T}}, \tilde{N}^{*T}, \tilde{M}^{*T}, \tilde{M}^{T} \right) \)]\label{prop:Lhawkes_tightness}
	Let \( t_0 > 0 \). Under Assumption~\ref{assum:Lhawkes}, the sequence \( X^T = \left( \tilde{\mathcal{I}}^{\Lambda^{*T}}, \tilde{N}^{*T}, \tilde{M}^{*T}, \tilde{M}^{T} \right) \) satisfies the following:
	\begin{enumerate}
		\item The sequence \( (X^T)_{T > 0} \) is $\mathcal{C}$-tight in the Skorokhod topology on \( \mathcal{D}([0,t_0], \mathbb{R}^2_+\times\mathbb{R}^2) \). As a result, any limiting process lies in \( \mathcal{C}([0,t_0], \mathbb{R}^2_+\times\mathbb{R}^2) \), i.e., it has continuous sample paths.
		
		\item The processes \( \tilde{\mathcal{I}}^{\Lambda^{*T}} \) and \( \tilde{N}^{*T} \) become asymptotically indistinguishable in probability:
		
		\(\qquad \hspace{4cm} \sup_{t \in [0, t_0]} \left| \tilde{\mathcal{I}}_t^{\Lambda^{*T}} - \tilde{N}^{*T}_t \right|  \underset{T \to +\infty}{\overset{\mathbb{P}}{\rightarrow}} 0.\)
		\item Futhermore let \( X = \left( \tilde{\mathcal{I}}^{\Lambda^*}, \tilde{N}^{*}, \tilde{M}^{*}, \tilde{M} \right) \) be any limit point of the sequence \( (X^T)_{T > 0} \), that is, \(X^T \overset{\mathcal L }{\Rightarrow} X\) in \(D([0,t_0], \mathbb{R}^2_+\times\mathbb{R}^2)\). Then \( \tilde{M}^{*} \) is a continuous martingale with quadratic variation \( \langle \tilde{M}^{*} \rangle = \tilde{N}^{*} = \tilde{\mathcal{I}}^{\Lambda^*} \). Moreover
		the sequence \( (\tilde M^{T})_{T > 0} \) weakly converges along a subsequence in \(\mathcal{C}([0,t_0],\R)\) to the continuous martingale \(\tilde M_t := \int_{0}^{t} \varsigma(s) \, d\tilde M^{*}_s\), whose quadratic variation is given by \( \langle \tilde{M} \rangle = \int_{0}^{\cdot} \varsigma^2(s) d\langle \tilde{M}^{*} \rangle_s = \tilde{\mathcal{I}}^{\varsigma^2\Lambda^*} \). 
	\end{enumerate}
\end{proposition}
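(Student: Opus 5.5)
We follow the strategy of \cite{JaissonRosenbaum2015,ElEuchFukasawaRosenbaum2018}, based on the criteria of \cite[Chapter VI]{jacod2013limit}. First we treat the integrated intensity $\tilde{\mathcal{I}}^{\Lambda^{*T}}$. It is nondecreasing with continuous paths, and Proposition~\ref{prop:estim} bounds its expectation uniformly in $T$. We use the decomposition $\tilde{\mathcal{I}}^{\Lambda^{*T}} = \mathcal{I}^{I^T}+\mathcal{I}^{J^T}+\mathcal{I}^{K^T_1}+\mathcal{I}^{K^T_2}$. Proposition~\ref{prop:uniformCvgce} already gives $\mathcal C$-tightness of the first three terms, so it remains to handle $\mathcal{I}^{K^T_2}$. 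By stochastic Fubini,
\[
\mathcal{I}^{K^T_2}_t=\int_0^t \Big(\int_0^{t-u}(1-a_T)R^{\prime T}_{-1}(Tr)\,dr\Big)\frac{dM^T_{Tu}}{\tilde\mu^T(u/T)} .
\]
The inner factor equals $T^{-1}\int_0^{t-u} f^T(r)\,dr$, which is bounded by $T^{-1}$ times a constant, uniformly by Proposition~\ref{Lem:Lhawkes}(2). So $\mathcal{I}^{K^T_2}$ is essentially $\int_0^{t}F^T(t-u)\,d\tilde M^{T}_u$ (up to the factor $\nu/\lambda$), with $F^T(x)=\int_0^x f^T$ uniformly bounded and converging uniformly to $\int_0^x f_{\alpha,\lambda}$. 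We then bound increments: $\mathbb{E}|\mathcal{I}^{K^T_2}_t-\mathcal{I}^{K^T_2}_s|^2$ is controlled, through the bracket $\int\varsigma^T{}^2\,d\tilde{\mathcal{I}}^{\Lambda^{*T}}$ and the estimate $\mathbb{E}[\Lambda^{*T}]\le C$ (from \eqref{eq:IntLam}, as in the proof of Proposition~\ref{prop:estim}), by $C\big(|t-s|+\int_0^{t_0}|F^T(x+h)-F^T(x)|^2dx\big)$. The integrability of $f_{\alpha,\lambda}$ makes the second term $O(h^{\alpha})$ uniformly. Alternatively we write $F^T=\mathbf 1*f^T$ and use the monotone-plus-martingale structure. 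Aldous' criterion together with continuity of the paths then gives $\mathcal C$-tightness. \textbf{This is the step we expect to be the main obstacle}: obtaining a modulus of continuity that is uniform in $T$ for the stochastic convolution, since $\Lambda^{*T}$ is only bounded in $L^1$. If the $L^2$ route fails, our fallback is to use the sum of two nondecreasing processes: $\tilde{\mathcal I}^{\Lambda^{*T}}$ is nondecreasing, and its pointwise limit in law is controlled. Since Proposition~\ref{prop:estim} gives that the compensators converge, we can invoke \cite[Theorem VI.3.37]{jacod2013limit}.

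For item (2), $\tilde N^{*T}-\tilde{\mathcal I}^{\Lambda^{*T}}=\frac{\nu}{\lambda}\sqrt{\cdot}$-type martingale $=\frac{\nu}{\lambda}\,\tilde M^{T}\cdot\frac{\lambda}{\nu}T^{-1}$-scaled. Precisely, it equals $\frac{1-a_T}{T}\int_0^{tT}\tilde\mu^T(s/T^2)^{-1}dM^T_s$. By Doob's inequality, its squared supremum has expectation at most $4\frac{(1-a_T)}{T}\sup(1/\tilde\mu^T)\,\mathbb{E}[\tilde{\mathcal I}^{\Lambda^{*T}}_{t_0}]\le C(1-a_T)/T\to0$, which gives item (2). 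Consequently $\tilde N^{*T}$ inherits $\mathcal C$-tightness from $\tilde{\mathcal I}^{\Lambda^{*T}}$. For $\tilde M^{*T}$ and $\tilde M^T$, the jumps of $\tilde M^{*T}$ are bounded by $\sqrt{(1-a_T)/(Tc)}\to0$ and those of $\tilde M^T$ by $C/T$. Their predictable brackets, $\tilde{\mathcal I}^{\Lambda^{*T}}$ and $\int(\varsigma^T)^2d\tilde{\mathcal I}^{\Lambda^{*T}}$, are $\mathcal C$-tight by the first step and the uniform boundedness of $\varsigma^T$. Hence \cite[Theorem VI.4.13 and Proposition VI.3.26]{jacod2013limit} give $\mathcal C$-tightness, and joint tightness of the vector follows since all components are $\mathcal C$-tight (\cite[Corollary VI.3.33]{jacod2013limit}).

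For item (3), along a converging subsequence we have $\sup_T\mathbb{E}[\sup_t|\tilde M^{*T}_t|^2]<\infty$ by Proposition~\ref{prop:estim}. This gives uniform integrability, so the limit $\tilde M^*$ is a continuous martingale for its natural filtration (with jumps vanishing, \cite[Corollary IX.1.19]{jacod2013limit}). The same uniform integrability applied to $(\tilde M^{*T})^2-\tilde{\mathcal I}^{\Lambda^{*T}}$ shows that $(\tilde M^*)^2-\tilde{\mathcal I}^{\Lambda^*}$ is a martingale, so $\langle\tilde M^*\rangle=\tilde{\mathcal I}^{\Lambda^*}=\tilde N^*$ by item (2). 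Finally, $\varsigma^T\to\varsigma$ by Assumption~\ref{assum:Lhawkes}(2) and is uniformly bounded. Writing $\tilde M^T=\int\varsigma\,d\tilde M^{*T}+\int(\varsigma^T-\varsigma)\,d\tilde M^{*T}$, the second term tends to $0$ in $L^2$ by dominated convergence applied to $\mathbb{E}\int(\varsigma^T-\varsigma)^2d\tilde{\mathcal I}^{\Lambda^{*T}}$, using the bound $\mathbb{E}\Lambda^{*T}\le C$. For the first term, we approximate $\varsigma$ by step functions, which turns it into a continuous functional of $\tilde M^{*T}$, and pass to the limit using the uniform $L^2$ control. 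This yields $\tilde M=\int\varsigma\,d\tilde M^*$ with $\langle\tilde M\rangle=\int\varsigma^2\,d\tilde{\mathcal I}^{\Lambda^*}$.
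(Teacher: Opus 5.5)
The gap is in item~1, at exactly the step you flagged: the $\mathcal C$-tightness of $\mathcal I^{K^T_2}=\frac{\nu}{\lambda}\int_0^{\cdot}F^T(\cdot-u)\,d\tilde M^T_u$. Everything else in your argument depends on it, since you get tightness of $\tilde N^{*T}$ from it and tightness of $\tilde M^{*T},\tilde M^T$ through their brackets.

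What you have is a second-moment bound at deterministic times of order $|t-s|$. Even granting the uniform $O(h^\alpha)$ for the second term, which integrability of the limit $f_{\alpha,\lambda}$ alone does not give, this is below the Kolmogorov--Chentsov threshold $|t-s|^{1+\beta}$. As you note, higher moments are unavailable because $\Lambda^{*T}$ is only $L^1$-bounded.

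Aldous' criterion does not close the gap either. It needs control of $Y_{\tau+h}-Y_\tau$ at stopping times $\tau$. For a Volterra stochastic convolution, the piece $\int_0^\tau\bigl(F^T(\tau+h-u)-F^T(\tau-u)\bigr)\,d\tilde M^T_u$ has an integrand that depends on $\tau$ and is therefore not adapted, so your isometry computation does not apply.

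The fallback is circular. A criterion for nondecreasing processes such as \cite[Theorem VI.3.37]{jacod2013limit} presupposes convergence of $\tilde{\mathcal I}^{\Lambda^{*T}}$ at fixed times to a continuous limit. Proposition~\ref{prop:estim} gives only moment bounds, not convergence of compensators.

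\textbf{The paper's fix is pathwise.} Integrating by parts gives $\mathcal I^{K^T_2}_t=\frac{\nu}{\lambda}\int_0^t f^T(t-u)\,\tilde M^T_u\,du$. Since $f^T$ is nonincreasing (complete monotonicity of $\varphi$ and \cite[Theorem~5.4]{gripenberg1990}), this yields $w(\mathcal I^{K^T_2},\delta)\le\frac{2\nu}{\lambda}\bigl(\int_0^\delta f^T\bigr)\sup_{u\le t_0}|\tilde M^T_u|$. Markov's inequality, the $L^1$ bound of Proposition~\ref{prop:estim}, and $\limsup_T\int_0^\delta f^T=\int_0^\delta f_{\alpha,\lambda}\to0$ then conclude. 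Your one-line alternative ``$F^T=\mathbf 1*f^T$'' points in this direction but is not carried out.

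\textbf{Your $L^2$ route can also be rescued.} Because $0\le F^T\le 1$ is nondecreasing, you actually get $\mathbb E|Y_t-Y_s|^2\le C\,h\,\omega^T(h)$, where $\omega^T(h):=\sup_x\bigl(F^T(x+h)-F^T(x)\bigr)$. Combine this with the monotonicity of $\tilde{\mathcal I}^{\Lambda^{*T}}$ and a union bound over a grid of mesh $\delta$. This gives $\mathbb P\bigl(w(\tilde{\mathcal I}^{\Lambda^{*T}},\delta)>\varepsilon\bigr)\lesssim\varepsilon^{-2}t_0\,\omega^T(2\delta)$, plus the modulus of the $\mathcal C$-tight remainder. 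Finally, $\lim_{\delta\to0}\limsup_T\omega^T(2\delta)=0$ by the uniform convergence $F^T\to\int_0^{\cdot} f_{\alpha,\lambda}$.

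Items~2 and~3 otherwise follow the paper: Doob's inequality for item~2, and \cite[Theorem VI.4.13, Proposition VI.3.26]{jacod2013limit} for the martingales. Two remarks on item~3:
\begin{itemize}
\item A uniform $L^2$ bound on $\sup_t|\tilde M^{*T}_t|$ does not make $(\tilde M^{*T})^2-\tilde{\mathcal I}^{\Lambda^{*T}}$ uniformly integrable. Identifying the bracket therefore needs localization or \cite[Theorem VI.6.26]{jacod2013limit}.
\item Your step-function approximation of $\int\varsigma\,d\tilde M^{*T}$, controlled through $\mathbb E\Lambda^{*T}_s\le C$, is sound. For a merely Borel $\varsigma$ it is a more robust substitute for the paper's appeal to stability of stochastic integrals.
\end{itemize}
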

\noindent The weak convergence of the sequence of integrated processes \( (\tilde{\mathcal{I}}^{\Lambda^{*T}})_{T > 0} \) does in general not
imply the convergence of the processes \((\Lambda^{*T})_{T > 0} \). 
When the initial condition  of the Hawkes process is not zero, without the \(\mathcal{C}\)-tightness of the rescaled intensity processes \( (\Lambda^{*T})_{T > 0} \), we can in general not expect to identify its weak limit by identifying the weak limit of the sequence of integrated processes \( (\tilde{\mathcal{I}}^{\Lambda^{*T}})_{T > 0} \) and proceeding by differentiation. The main issue as shown in \cite[Example 3.1]{HorstXuZhang2024} is the convergence of the initial state of the volatility process that cannot
always be inferred from the convergence of the integrated processes. This calls for a convergence result for the rescaled intensity process \( (\Lambda^{*T})_{T > 0} \) that we establish in the following.
 \begin{proposition}[\(\mathcal{C}\)-tightness of \( (\Lambda^{*T})_{T > 0} \)]\label{prop:tightIntensity}
	The sequence \( (\Lambda^{*T})_{T > 0} \) is uniformly integrable and \(\mathcal{C}\)-tight in the Skorokhod topology on \( \mathcal{D}([0,t_0], \mathbb{R}_+) \). As a result, any limiting process lies in \( \mathcal{C}([0,t_0], \mathbb{R}_+) \), i.e., it has continuous sample path.
\end{proposition}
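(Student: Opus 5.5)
We will work directly with the decomposition \eqref{eq:form1}--\eqref{eq:form2} of \(\Lambda^{*T}\) and write
\[
\Lambda^{*T}_t = D^T(t) + S^T_t ,
\]
where \(D^T\) collects the three non-martingale terms (the initial-condition term, the \(\theta\)-convolution term and the vanishing term \((1-a_T)\mu^T/\tilde\mu^T(t/T)\)). The remaining term is
\[
S^T_t=\int_0^t f^T(t-s)\, \varsigma^T_{t}(s)\sqrt{\Lambda^{*T}_s}\,dW^T_s ,
\]
with the bracketed prefactor \(\sqrt{\tilde\mu^T(s/T)/(T(1-a_T)\tilde\mu^T(t/T)^2)}\) bounded uniformly by Remark (3) on Assumption~\ref{assum:Lhawkes} and Assumption~\ref{assum:Lhawkes}~(2).

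\textbf{Deterministic part.} By Corollary~\ref{Corol:LimitMeasure}~(1)--(3) and Assumption~\ref{assum:Lhawkes}~(3), \(D^T\) converges uniformly on \([0,t_0]\), up to the random factor \(T^{-\alpha}\Lambda^{*T}_0\). That factor converges in law and has bounded mean, so \(D^T\) is \(\mathcal C\)-tight with a continuous limit. Its contribution to uniform integrability follows from \(\sup_T\E[T^{-\alpha}\Lambda_0^{*T}]<\infty\) together with uniform integrability of the convergent sequence \(T^{-\alpha}\Lambda_0^{*T}\); if that is not automatic, we condition on \(\Lambda_0^{*T}\) and handle it separately.

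\textbf{Moment bounds.} First we show \(\sup_T\sup_{t\le t_0}\E[(\Lambda^{*T}_t)^2]<\infty\), which gives uniform integrability. Taking expectations in \eqref{eq:form1} and using Proposition~\ref{Lem:Lhawkes} (the measures \(m^T\) have bounded mass) gives \(\sup_{T,t}\E[\Lambda^{*T}_t]<\infty\). The Itô isometry for the martingale \(M^T\) with \(d\langle M^T\rangle=\Lambda^T ds\) then gives
\[
\E[(S^T_t)^2]\le C\int_0^t (f^T(t-s))^2\,\E[\Lambda^{*T}_s]\,ds .
\]
This requires \(\sup_T\|f^T\|_{L^2[0,t_0]}<\infty\). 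That bound should follow from the complete monotonicity of \(\varphi\): \(R'^T_{-1}=\sum_k(\varphi^T)^{*k}\) is then completely monotone, hence \(f^T\) is nonincreasing. Combined with the Laplace-transform asymptotics of Proposition~\ref{Lem:Lhawkes}, this gives a Tauberian-type bound \(f^T(u)\le Cu^{\alpha-1}\) uniformly in \(T\), which is square integrable since \(\alpha>\frac12\).

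\textbf{Tightness.} We apply Kolmogorov--Chentsov to \(S^T\), splitting the increments for \(h>0\) as
\[
S^T_{t+h}-S^T_t=\int_t^{t+h} f^T(t+h-s)(\cdots)\,dW^T_s+\int_0^t\big(f^T(t+h-s)-f^T(t-s)\big)(\cdots)\,dW^T_s ,
\]
plus a term coming from the \(t\)-dependence of the prefactor, which is Lipschitz in \(t\) by the Lipschitz assumption on \(\tilde\mu^T\). The BDG inequality for purely discontinuous martingales (with the jump contribution controlled by the quadratic variation, since jumps of \(N^T\) have size one) gives
\[
\E|S^T_{t+h}-S^T_t|^{2p}\le C\Big(\int_0^h f^T(u)^2du+\int_0^{t_0}\big(f^T(u+h)-f^T(u)\big)^2du\Big)^p \sup_s\E[(\Lambda^{*T}_s)^p] + \text{jump term}.
\]
By monotonicity of \(f^T\) and the bound \(f^T(u)\le Cu^{\alpha-1}\), both integrals are \(O(h^{2\alpha-1})\). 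Choosing \(p\) with \(p(2\alpha-1)>1\) yields tightness once we have \(\sup_{T,s}\E[(\Lambda^{*T}_s)^p]<\infty\) for large \(p\). We obtain those higher moments by induction on \(p\) via the same BDG argument and a Volterra Grönwall lemma with kernel \(u^{2\alpha-2}\). The jump terms carry factors \(T^{-1}\)-small (each jump of \(\Lambda^{*T}\) has size at most \(\omega_T\varphi^T(0)\to0\)), and this vanishing maximal jump size is exactly what upgrades tightness to \(\mathcal C\)-tightness via \cite[Prop.~VI.3.26]{jacod2013limit}. Positivity of the limit is immediate since each \(\Lambda^{*T}\ge0\).

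\textbf{Main obstacle.} The crux is the uniform kernel estimate \(\sup_T f^T(u)\le Cu^{\alpha-1}\) on \((0,t_0]\), i.e. uniform \(L^2\) and \(L^2\)-modulus control of the rescaled resolvent; Proposition~\ref{Lem:Lhawkes} only gives weak convergence. The first attempt will combine complete monotonicity of \(R'^T_{-1}\) with the bound
\[
\int_0^x f^T\le C\,x^\alpha \quad\text{uniformly in } T ,
\]
obtained from the Laplace transform \(\frac{T(1-a_T)\hat\varphi}{1-a_T\hat\varphi}\) and the tail \(\Phi(x)\sim Cx^{-\alpha}/\alpha\). Monotonicity then converts this integrated bound into the pointwise one via \(x f^T(x)\le\int_0^x f^T\).
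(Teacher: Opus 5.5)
Your route differs from the paper's on the stochastic term, and yours is the one that works. The paper splits $\Lambda^{*T}$ as you do, with $I^T+J^T+K^T_1$ handled by Corollary~\ref{Corol:LimitMeasure} and Lemma~\ref{Lm:cvgcezero}. It writes $K^T_{2,t}=\frac{\nu}{\lambda}\int_0^t f^T(t-u)\,d\tilde M^T_u$ and then argues pathwise. Integration by parts gives $K^T_{2,t}=f^T(0)\tilde M^T_t+\int_0^t f^{\prime T}(t-u)\tilde M^T_u\,du$. The modulus is then bounded by $2\big(\int_0^\delta|f^{\prime T}(u)|\,du\big)\sup_{u\le t_0}|\tilde M^T_u|$, and Markov's inequality with Proposition~\ref{prop:estim} is meant to conclude. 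Uniform integrability is read off the bound on $\E[\tilde{\mathcal I}^{\Lambda^{*T}}_t]$.

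That argument needs only first moments, but it cannot close. Since $f^T$ is nonincreasing, $\int_0^\delta|f^{\prime T}|=f^T(0)-f^T(\delta)$. Moreover $f^T(0)=T(1-a_T)a_T\varphi(0)\asymp T^{1-\alpha}\to\infty$, which is unavoidable because $f_{\alpha,\lambda}(t)\sim\lambda t^{\alpha-1}/\Gamma(\alpha)$ at $0$. Hence $\lim_{\delta\to0}\limsup_T\int_0^\delta|f^{\prime T}|=+\infty$, and $f^T(0)\tilde M^T$ is not tight on its own. The singularity must be handled through the $L^2$ (BDG) size of the kernel rather than its total variation. That is exactly what your $L^{2p}$ estimate does, using only $\alpha>\frac12$.

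Your crux is resolvable along the lines you propose:
\begin{itemize}
\item For $Tx\ge1$: $\int_0^x f^T\le e\,L_{f^T}(1/x)\le e(1-a_T)/\big(1-L_\varphi(1/(Tx))\big)\le Cx^\alpha$, since $1-L_\varphi(s)\ge cs^\alpha$ on $(0,1]$.
\item For $Tx<1$: monotonicity and $f^T(0)\le CT^{1-\alpha}$ give $\int_0^x f^T\le CT^{1-\alpha}x\le Cx^\alpha$.
\item Then $xf^T(x)\le\int_0^x f^T$ yields $f^T(x)\le Cx^{\alpha-1}$, and $\int_0^{t_0}\big(f^T(u+h)-f^T(u)\big)^2du\le\int_0^h (f^T)^2+f^T(h)\int_h^{2h}f^T\le Ch^{2\alpha-1}$.
\end{itemize}
One correction: complete monotonicity of $R^{\prime T}_{-1}$ does not follow termwise from $\sum_k(\varphi^T)^{*k}$, since already $\varphi^{*2}$ is not completely monotone. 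It holds because $L_\varphi$ is a Stieltjes function with $L_\varphi(0)=1$. Then $1/(a_TL_\varphi)-1$ is a complete Bernstein function, so $a_TL_\varphi/(1-a_TL_\varphi)$ is Stieltjes.

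Two steps need more than your sketch gives them.

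\emph{(i) Jump terms.} In the $L^{2p}$ BDG (Novikov-type) bound, the purely discontinuous part $\E\int|H_r|^{2p}\Lambda^T_r\,dr$ for an increment over $[t,t+h]$ is only linear in $h$. Since $R^{\prime T}_{-1}\le a_T\varphi(0)$ and $\E[\Lambda^T_r]\le C/(1-a_T)$, it is of order $(1-a_T)^{2p-1}T\,h\asymp T^{-\kappa}h$ with $\kappa=\alpha(2p-1)-1$. This order is actually attained when $hT\lesssim1$: a single jump of size $\asymp T^{-\alpha}$ occurs with probability $\asymp hT^{1+\alpha}$. So no choice of $p$ gives $O(h^{1+\varepsilon})$ uniformly in $T$, and Kolmogorov--Chentsov does not apply as stated. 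The vanishing maximal jump size upgrades tightness to $\mathcal C$-tightness, but it does not produce tightness. You need a criterion that tolerates this term. For example, use the moment bound only for $h\ge T^{-\gamma}$, where $T^{-\kappa}h\le h^{1+\kappa/\gamma}$, and control oscillations below that scale separately. Alternatively, use Billingsley's product-moment criterion in $D$.

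\emph{(ii) Moments of the initial condition.} Only a first-moment bound on $\Lambda^{*T}_0$ and convergence in law of $T^{-\alpha}\Lambda^{*T}_0$ are assumed. So neither $\sup_T\E[(\Lambda^{*T}_t)^2]$ nor the higher moments your induction uses are available unconditionally. Run the BDG/induction conditionally on $\mathcal F_0$. This gives bounds $C_p\big(1+(T^{-\alpha}\Lambda^{*T}_0)^p\big)$, which suffice for tightness because $(T^{-\alpha}\Lambda^{*T}_0)_T$ is tight. Uniform integrability of $\Lambda^{*T}_t$ then reduces exactly to that of $T^{-\alpha}\Lambda^{*T}_0$, which convergence in law does not provide. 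It must be assumed. If the first-moment bound is uniform in $T$, as Proposition~\ref{prop:estim} uses, it is automatic, but then $T^{-\alpha}\Lambda^{*T}_0\to0$ in $L^1$ and $\Lambda^*_0=0$.
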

 \noindent By Proposition~\ref{prop:Lhawkes_tightness}, %the sequence
  {\small $\left( \tilde{\mathcal{I}}^{\Lambda^{*T}}, \tilde{N}^{*T}, \tilde{M}^{*T}, \tilde{M}^{T} \right)_{T \geq 0}$} is tight. Hence, by Prokhorov's theorem, it admits a weakly convergent subsequence. The next section characterizes the corresponding accumulation points.

\section{Dynamics of the limit points and regularity}

We state the results about the dynamics of the limit points i.e. the weak limit of the sequence of integrated rescaled Hawkes processes and then differentiate that limit to  identify the weak limit of the rescaled  nearly unstable Hawkes processes, while also establishing the regularity properties of the limit points.

\subsection{Characterization of the limiting processes and moment control}

Thanks to Prokhorov's theorem, (see e.g. \cite[Theorem~2.4.7]{karatzas1991}), % we get relative compactness (\cite[Definition~2.4.6]{karatzas1991}) of the sequence of measure \( \big( \mathbb{P}_{X^T} \big)_{T \geq 0} \) in \(C([0,t_0];\R^3)\). 
the joint tightness of the sequence \( \big( X^T \big)_{T \geq 0} \) in \(D([0,t_0], \mathbb{R}^4)\) established in proposition \ref{prop:Lhawkes_tightness} ensures that there exists a converging subsequence \( \big( X^{T_k} \big)_{k \in \N^* } \) in \((D([0,t_0], \mathbb{R}^4), \text{J}_1(\mathbb{R}^4))\), i.e. the existence of a weakly convergent (in law) subsequence of \( \big( X^T\big)_{T \geq 0}:= \big( \left( \tilde{\mathcal{I}}^{\Lambda^{*T}}, \tilde{N}^{*T}, \tilde{M}^{*T}, \tilde{M}^{T} \right) \big)_{T \geq 0} \) to a certain X in \( D([0,t_0], \mathbb{R}^4) \), endowed with the \( J_1(\mathbb{R}^4) \) topology: \(X^T \overset{\mathcal L }{\Rightarrow} X\) in \(D([0,t_0], \mathbb{R}^4)\) . 
Let's now characterize the law of the limiting processes followed by the identification of the
stochastic equation that each weak accumulation point must satisfy.
\begin{theorem}[Characterization of the limits of the scaled nearly unstable Hawkes processes]\label{Thm:Limit}
	Let \(t_0 > 0\). As \(T \to +\infty\), under  Assumption \ref{assum:Lhawkes}, $(X^T_t)_{t \in [0,t_0] } \overset{\mathcal L }{\Rightarrow} (X_t)_{t \in [0,t_0] }$ in \((D([0,t_0], \mathbb{R}^2_+\times\mathbb{R}^2), \text{J}_1(\mathbb{R}^2_+\times\mathbb{R}^2))\) i.e. the sequence \((X^T_t)_{t \in [0,t_0] }\)
	converges in law for the Skorokhod topology on \([0,t_0]\) towards some processes $$
	X := \left\{ X_t =\left( \tilde{\mathcal{I}}_t^{\Lambda^*}, \tilde{N}^*_t, \tilde M^*_t, \tilde M_t \right) : t \geq 0 \right\} \quad	\textit{satisfying the following properties: }$$
	
	\begin{itemize}
		\item The convergence holds almost surely and uniformly (up to copies in law of the sequence \(\big( X^T\big)_{T \geq 0}\)on a new probability space) i.e.

		\begin{equation}\label{eqCvgce}
			\lim_{T \to \infty} \sup_{t \in [0, t_0]} \left( |\tilde{\mathcal{I}}^{\Lambda^{*T}}_t - \tilde{\mathcal{I}}^{\Lambda^{*}}_t| + |\tilde N^T_t - \tilde N^*_t | + |\tilde M^{*T}_t - \tilde M^*_t | + |\tilde M^{T}_t - \tilde M_t | \right) = 0 \quad \text{a.s.} 
		\end{equation}

		\item There exists a Brownian motion \( W \) such that: $
		\tilde M^*_t = \int_0^t \sqrt{\Lambda^*_s} dW_s
		$ and \(\tilde M_t := \int_{0}^{t} \varsigma(s) \, \sqrt{\Lambda^*_s} dW_s\), which are both continuous martingales.
		\item $\tilde{\mathcal{I}}_\cdot^{\Lambda^*}= \tilde{N}^*_\cdot =\int_0^\cdot\Lambda^*_s ds$ is a continuous, non-decreasing and nonnegative process, starting from \(0\). The derivative
		 \( \Lambda^* \) of \( \tilde{N}^* \) is an accumulation point of the rescaled process \( \{\Lambda^{*T}\}_{T \geq 0} \) in \( D([0, t_0); \mathbb{R}^+) \) and the unique (in law)  non-negative  continuous weak solution of the stochastic Volterra integral equation below
		on \([0, t_0]\):
		\begin{equation}\label{eq:limitThm}
		\Lambda^*_t = \Lambda^*_0\phi(t) +\int_0^t K_{\alpha}(t-s)\lambda(\theta(s)- \Lambda^*_s)ds + \nu \int_0^t K_{\alpha}(t-s)\varsigma(s) \sqrt{\Lambda^*_s}dW_s, \quad X_0\perp\!\!\!\perp W,
		\end{equation}
		where $K_{\alpha}:= u\to \frac{u^{\alpha - 1}}{\Gamma(\alpha)} \mathbf{1}_{\mathbb{R}_+}(t)$ is the fractional integration kernel. \textit{The solution to the above equation is unique in law. In particular, the sequence of rescaled Hawkes processes converges in law.}
		
		\item Furthermore, the process \( \Lambda^* \) is non-negative and we have the following moment estimate for any \(p>0\):
		\begin{equation}\label{eq:MargBounds}
			\sup_{t \in [0, t_0]} \mathbb{E} \left[ \left| \Lambda^*_t \right|^{p} \right] \leq C_{p,t_0} \cdot \left( 1 + \|\phi\|^{p}_{t_0}\mathbb{E}[|\Lambda^*_0|^{p}] \right).
		\end{equation}
	\end{itemize}
\end{theorem}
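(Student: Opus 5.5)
We follow the standard route for nearly unstable Hawkes limits (Jaisson--Rosenbaum, El Euch--Fukasawa--Rosenbaum): extract a convergent subsequence, identify the equation every limit point satisfies, and use uniqueness in law to recover convergence of the whole sequence.

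\emph{Extraction and identification of the martingale part.} By Proposition~\ref{prop:Lhawkes_tightness} and Proposition~\ref{prop:tightIntensity}, the pair $(X^T,\Lambda^{*T})$ is jointly $\mathcal C$-tight. We therefore take a subsequence converging in law to some $(X,\Lambda^*)$ with continuous paths. Skorokhod's representation theorem gives copies on a common space. Since the limits are continuous, $J_1$ convergence upgrades to uniform convergence on $[0,t_0]$, which yields \eqref{eqCvgce}. Claim 2 of Proposition~\ref{prop:Lhawkes_tightness} gives $\tilde{\mathcal I}^{\Lambda^*}=\tilde N^*$. Uniform integrability of $\Lambda^{*T}$ and the uniform convergence of $\tilde\mu^T(s/T^2)/\tilde\mu^T(s/T)\to1$ give $\tilde{\mathcal I}^{\Lambda^*}_t=\int_0^t\Lambda^*_s\,ds$. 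This is continuous, nondecreasing and nonnegative, and its derivative $\Lambda^*$ is the accumulation point of $\Lambda^{*T}$. Claim 3 of Proposition~\ref{prop:Lhawkes_tightness} says $\tilde M^*$ is a continuous martingale with $\langle\tilde M^*\rangle=\int_0^\cdot\Lambda^*_s\,ds$. A martingale representation theorem, after enlarging the space if necessary, then provides a Brownian motion $W$ with $\tilde M^*=\int_0^\cdot\sqrt{\Lambda^*_s}\,dW_s$, and consequently $\tilde M=\int_0^\cdot\varsigma(s)\sqrt{\Lambda^*_s}\,dW_s$. Independence of $\Lambda^*_0$ from $W$ comes from $\mathcal F^T_0$-measurability of $\Lambda_0^{*T}$, which passes to the limit.

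\emph{Limiting equation.} We integrate \eqref{eq:form1} in time, using the decomposition $\tilde{\mathcal I}^{\Lambda^{*T}}=\mathcal I^{I^T}+\mathcal I^{J^T}+\mathcal I^{K^T_1}+\mathcal I^{K^T_2}$. Proposition~\ref{prop:uniformCvgce} handles the deterministic and initial-condition parts as well as $\mathcal I^{K^T_1}$. For $\mathcal I^{K^T_2}$ we apply stochastic Fubini:
\[
\mathcal I^{K^T_2}_t=\int_0^t \Bigl(\int_0^{t-u} f^T(r)\,dr\Bigr)\,d\Bigl(\tfrac{\nu}{\lambda}\tilde M^T_u\Bigr)\cdot(1+o(1)),
\]
with a change of variable $u\mapsto uT$ matching the normalisation of $\tilde M^T$. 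Write $F^T(r)=\int_0^r f^T$, so that $\mathcal I^{K^T_2}_t=\tfrac{\nu}{\lambda}\int_0^t F^T(t-u)\,d\tilde M^T_u$. An integration by parts turns this into $\tfrac{\nu}{\lambda}\int_0^t \tilde M^T_{t-s}\,m^T(ds)$. By Proposition~\ref{Lem:Lhawkes}, $m^T\to m^*$ weakly; combined with uniform convergence of $\tilde M^T\to\tilde M$, this gives
\[
\mathcal I^{K^T_2}_t\to\frac{\nu}{\lambda}\int_0^t f_{\alpha,\lambda}(t-s)\,\tilde M_s\,ds .
\]
Proposition~\ref{prop:stability} may be used for the $L^1$ identification. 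Differentiating the limiting identity shows that $\Lambda^*$ satisfies \eqref{eq:limitInt}. Proposition~\ref{prop:wiener_hopf} then gives the equivalent form \eqref{eq:limitThm}. Non-negativity follows because $\Lambda^{*T}\ge0$ and the convergence is in law.

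\emph{Uniqueness, convergence of the full sequence, and moments.} The main obstacle is uniqueness in law, because both the drift and the diffusion coefficient depend on time. We would use the affine structure: via Itô's formula for the Volterra process, $\mathbb E[\exp(\int_0^{t} h(t-s)\,dY_s)]$ is expressed through a deterministic Riccati--Volterra equation with time-dependent coefficients $\lambda$, $\nu\varsigma$ and $\theta$. Existence and uniqueness of a nonpositive solution of that Riccati--Volterra equation would be established by the comparison arguments of Abi Jaber--Larsson--Pulido. This determines the finite-dimensional Laplace functionals and hence the law. Once uniqueness holds, every subsequential limit coincides, so the full sequence converges. The moment bound \eqref{eq:MargBounds} would be proved by first treating $p\ge2$ and then Jensen for $p<2$. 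We take $p$-th moments in \eqref{eq:limitThm} using $\theta\le K$ and $\varsigma$ bounded, and apply BDG to the stochastic convolution to get the bound $C\bigl(\int_0^t K_\alpha^2(t-s)\,\mathbb E|\Lambda^*_s|^{p/2}\,ds\bigr)$. Since $|x|^{p/2}\le1+|x|^p$, a localisation followed by a fractional Grönwall (Volterra) lemma with kernel $K_\alpha^2\in L^1_{loc}$ closes the estimate.
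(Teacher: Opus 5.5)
Your convergence and identification argument matches the paper's proof step by step:
\begin{itemize}
\item Skorokhod representation upgrades $J_1$ convergence to a.s.\ uniform convergence;
\item martingale representation gives $\tilde M^*$ and $\tilde M$ in terms of $W$;
\item the decomposition is the same, $\mathcal I^{I^T}+\mathcal I^{J^T}+\mathcal I^{K^T_1}+\mathcal I^{K^T_2}$;
\item stochastic Fubini and integration by parts give $\mathcal I^{K^T_2}_t=\frac{\nu}{\lambda}\int_0^t f^T(t-s)\tilde M^T_s\,ds$;
\item weak convergence of $m^T$ identifies the limit, and you then differentiate using the $\mathcal C$-tightness of $\Lambda^{*T}$ and apply Proposition~\ref{prop:wiener_hopf}.
\end{itemize}
Uniqueness in law is also obtained through the exponential-affine Laplace transform. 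The difference is that the paper does not redo the time-inhomogeneous Riccati--Volterra analysis you sketch; it cites \cite[Theorem~3.6]{EGnabeyeuPR2025} with $\mu=\sum_i u_i\delta_{t_n-t_i}$.

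Two of your steps can be simplified. The change of variables $s=uT$ gives exactly $\tilde{\mathcal I}^{\Lambda^{*T}}_t=\int_0^t\Lambda^{*T}_u\,du$, so you need no uniform-integrability or $\tilde\mu^T$-ratio argument. The identity $\mathcal I^{K^T_2}_t=\frac{\nu}{\lambda}\int_0^tF^T(t-u)\,d\tilde M^T_u$ is also exact, so the factor $(1+o(1))$ should be dropped.

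For the moment bound with $p\ge2$ your route is genuinely different, and it works. You apply Volterra--Gr\"onwall to \eqref{eq:limitThm}, with the standard localisation $\mathbb E[|\Lambda^*_t|^p\mathbf 1_{\{t<\tau_n\}}]$ of \cite{abi2019affine}. The paper instead works on the resolvent form \eqref{eq:limitInt}, where the mean-reversion term is absorbed into $f_{\alpha,\lambda}$. There BDG gives directly $\sup_t\|\Lambda^*_t\|_p\le C\|\phi\|_{t_0}\|\Lambda^*_0\|_p+K+C(p)\sup_t\|\Lambda^*_t\|_{p/2}^{1/2}$, and one descends by halving $p$ to the explicit first moment, with neither Gr\"onwall nor localisation.

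The genuine gap is the case $p\in(0,2)$.
\begin{itemize}
\item Unconditional Jensen only gives $\sup_t\mathbb E|\Lambda^*_t|^p\le\bigl(C(1+\|\phi\|_{t_0}^2\mathbb E|\Lambda^*_0|^2)\bigr)^{p/2}$.
\item This is not of the form \eqref{eq:MargBounds}: $(\mathbb E|\Lambda^*_0|^2)^{p/2}$ cannot be bounded by $C(1+\mathbb E|\Lambda^*_0|^p)$ with $C$ independent of the law of $\Lambda^*_0$.
\item It is infinite whenever $\Lambda^*_0\in L^p\setminus L^2$. Starting from $p=1$ instead does not help, since $(\mathbb E\Lambda^*_0)^p$ is not controlled by $\mathbb E|\Lambda^*_0|^p$ either.
\end{itemize}
This is exactly the restriction the paper flags in the remark after Theorem~\ref{Thm:regul}. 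It treats $p<2$ through Theorem~\ref{Thm:regul}(3) and the splitting lemma \cite[Lemma~D.1]{JouPag22}, i.e.\ by reducing to a deterministic initial value.

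Within your framework the fix is to run the $p=2$ estimate conditionally on $\mathcal F_0$.
\begin{enumerate}
\item Since $\Lambda^*_0$ is $\mathcal F_0$-measurable and $W$ is an $(\mathcal F_t)$-Brownian motion, multiplying by $\mathbf 1_A$ for $A\in\mathcal F_0$ passes through the It\^o isometry, the localisation and Gr\"onwall.
\item This gives $\mathbb E[|\Lambda^*_t|^2\mid\mathcal F_0]\le C(1+\|\phi\|_{t_0}^2|\Lambda^*_0|^2)$.
\item Conditional Jensen and $(a+b)^{p/2}\le a^{p/2}+b^{p/2}$ then yield $\mathbb E[|\Lambda^*_t|^p\mid\mathcal F_0]\le C^{p/2}(1+\|\phi\|_{t_0}^p|\Lambda^*_0|^p)$.
\item Taking expectations gives \eqref{eq:MargBounds} for all $p\in(0,2)$.
\end{enumerate}
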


\noindent {\bf Remark: }
	1. {\em Strict Positivity of the weak solution:} If \(\phi(t)>0, \forall\, t\geq0\) and the initial sequence in Assumption \eqref{assum:Lhawkes} (4) and its limit are strictly positive real-valued random variable, then the weak solution \( \Lambda^* \) of the equation~\eqref{eq:limitThm} above is strictly positive. Indeed, if \(Z_0 \equiv 0\) in equations~\eqref{eq:intensity} and ~\eqref{eq:intensity2}, as shown in \cite{JaissonRosenbaum2016,ElEuchFukasawaRosenbaum2018}, the suitably rescaled version of intensity process asymptotically behaves as the variance(non-negative) process of a rough Heston model with initial variance equal to zero. With the non-zero starting value \(\Lambda^{*}_0\) in the limit in \eqref{eq:limitInt}, strict positivity is ensurely insofar as the limit \(\Lambda^{*}_0\) of the initial sequence \((T^{- \alpha}\Lambda_0^{*T})_{T\geq0}\) is strictly positive.
	
	\medskip
	\noindent 2.{\em Weak well-posedness of the fake stationary rough Heston model:}  Since \(S\) in Equation~\eqref{E:HestonlogS}  is fully determined by \(V\) in Equation~\eqref{eq:Volterra} , the existence of $S$ readily follows from that of $V$ whose weak existence and uniqueness in law is given by Theorem~\ref{Thm:Limit}.
	Consequently, the stochastic Volterra system~\eqref{E:HestonlogS}--~\eqref{eq:Volterra} admits a \([0,+\infty)-\)valued unique in law 
	continuous weak solution $(S, V)$ with values in $\mathbb{R} \times \mathbb{R}_+$, for any initial state $(S_0,V_0) \in \mathbb{R} \times \mathbb{R}_+$. Such new model belongs to the class of diffusion-based models used in the industry to encode implied-volatility information ( see~\cite{ElEuchGatheralRosenbaum2019}), alongside non-parametric grids of implied volatilities with spline interpolation, direct modelling of the asset’s implied density and surface-level parametrizations (see e.g.~\cite{Gatheral2012arbitrage}) followed by AI-driven fitting as illustrated in \cite{GnabeyeuKarkarIdboufous2024}.
	
	\medskip
	\noindent We now analyse the H\"older pathwise regularity of the sample paths of the variance process $V$. 
	\subsection{Regularity and maximal inequality}
	In this section we provide the proof of the H\"older continuity and the maximal inequality of the weak solutions $\Lambda^*$ to the two equivalent stochastic Volterra equations ~\eqref{eq:limitInt} and ~\eqref{eq:limitIntAffine2}.
	To this end, we recall that for a real-valued stochastic process $X$ on $[0, t_0]$, the Kolmogorov continuity theorem states that if for some constants $p, a, C > 0$,
	\begin{equation}
		\mathbb{E}\left[ \left| X_t - X_s \right|^p \right]
		\leq C \cdot |t - s|^{1 + a}, 
	\end{equation}
	uniformly in $0 \leq s, t \leq t_0$, then the process has a $\theta$-H\"older continuous modification for all $0 < \theta < a/p$. Hence we first need to establish the moment estimates for the increments of $\Lambda^*$.
	
	\begin{assumption}[Integrability and uniform H\"older continuity]\label{ass:int_holregul}
		Let \( \lambda, \alpha > 0 \). Assume the kernel \( K_\alpha \) is such that its \( \lambda \)-resolvent \( R_{\alpha, \lambda} \) and its derivative \( -f_{\alpha, \lambda} \) satisfy:
		\begin{enumerate}% [label=(\roman*)]
			\item[(i)] {\em Integrability:}
			\(\quad \big({\cal K }^{int}_{\beta}\big):\quad \int_0^{+\infty} f_{\alpha,\lambda}^{2\beta}(u) \, du < +\infty \quad \text{for some } \beta > 1,
			\)
			%	so that \( f_{\lambda} \in \mathcal{L}^2(\text{Leb}_1) \).
			
			\item[(ii)] {\em H\"older continuity:}  \( \exists \; \vartheta \in (0, 1] \), \( C < +\infty \) s.t. \(\max_{i=1,2} \left[ \int_0^{+\infty} |f_{\alpha,\lambda}(u + \bar{\delta}) - f_{\alpha,\lambda}(u)|^i \, du \right]^{\frac{1}{i}} \leq C \bar{\delta}^{\vartheta}.\)
			\item[(iii)] {\em Moment bound and regularity of the initial condition:} For some \( \delta > 0 \), for any \( p > 0 \) and \( t_0 > 0 \),
			\[
			\mathbb{E}\left(\sup_{t \in [0,t_0]} |\Lambda^*_0 \phi(t)|^p \right) < +\infty, \; \text{and} \;\; 
			\mathbb{E} \left|\Lambda^*_0 \phi(t') - \Lambda^*_0 \phi(t)\right|^p \leqslant C_{T,p} \left( 1 + \mathbb{E}\left[ \sup_{t \in [0,t_0]} |\Lambda^*_0 \phi(t)|^p \right] \right) |t' - t|^{\delta p}.
			\]
		\end{enumerate}
	\end{assumption}
	
	\begin{theorem}[Regularity and maximal inequality for inhomogeneous Volterra fractional diffusion]\label{Thm:regul}
		Let \( \Lambda^* \) be any weak solution to the stochastic Volterra equation~\eqref{eq:limitInt}. %(or ~\eqref{eq:limitIntAffine2}). 
		Then the following hold:
		\begin{enumerate}
			\item The process \( \Lambda^* \) is non-negative and has H\"older regularity \(\delta \wedge\vartheta\wedge \frac{\beta-1}{2\beta} - \epsilon \) for any sufficiently small \( \epsilon > 0 \) i.e. the sample paths of \( \Lambda^* \) are almost surely H\"older pathwise continuous of any order strictly less than \( \delta \wedge\vartheta\wedge \frac{\beta-1}{2\beta} \).

		%	 Moreover, the sample paths of $V$ are \( \left( \delta \wedge \vartheta \wedge \widehat \theta - \frac{1}{p} - \eta \right) \)-H\"older pathwise continuous (modulo  \( P \)-indistinguability) for sufficiently small \( \eta > 0 \).
			 
			\item For any \( a\!\in \big(0,\delta \wedge\theta\wedge \frac{\beta-1}{2\beta}\big)  \) and \( p \geq 0 \), there exists a constant \( C > 0 \) such that for any \( t_0 \geq 0 \),
			
			\begin{equation}\label{eq:Holderpaths}
				\left \| \sup_{s\neq t\in [0,t_0]}\frac{|\Lambda^*_t-\Lambda^*_s|}{|t-s|^{a}}\right\|_p^p = \mathbb{E}\left[\sup_{s\neq t\in [0,t_0]}\frac{|\Lambda^*_t-\Lambda^*_s|^p}{|t-s|^{ap}} \right]   <C_{a,p,t_0}  \left( 1 + \|\phi\|_{t_0}^{p}\mathbb{E}[|\Lambda^*_0|^{p}] \right)
			\end{equation}
			for some positive real constant $C_{a,p,t_0}=C_{a,\delta, \vartheta,\beta,p,t_0, f_{\alpha, \lambda}} $. 
			
			%	\[
			%	\mathbb{E} \left[ \left\| V^* \right\|^p_{C^{\kappa}_T} \right] \leq C \cdot (1 + T)^{p(\alpha - \kappa)}.
			%	\]

			\item In particular for each \( p \geq 0 \), there exists a constant \( C > 0 \) such that for any \( T > 0 \),
			\begin{equation}\label{eq:supLpbound}
				\left \| \sup_{t\in [0,t_0]} |\Lambda^*_t|\right\|_p^p =\mathbb{E}\left[ \sup_{t\in [0,t_0]} |\Lambda^*_t|^p\right] \le C'_{a,p,T}\left( 1 + \|\phi\|_{t_0}\mathbb{E}[|\Lambda^*_0|^p] \right).
			\end{equation}
		\end{enumerate}
		
	\end{theorem}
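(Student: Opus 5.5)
We work with the resolvent form \eqref{eq:limitInt}:
\[
\Lambda^*_t=\Lambda^*_0\psi(t)+(f_{\alpha,\lambda}*\theta)(t)+\tfrac{\nu}{\lambda}Z_t,\qquad
\psi:=\phi-f_{\alpha,\lambda}*\phi,\qquad
Z_t:=\int_0^t f_{\alpha,\lambda}(t-s)\varsigma(s)\sqrt{\Lambda^*_s}\,dW_s .
\]
Non-negativity is inherited from Theorem~\ref{Thm:Limit}, since $\Lambda^*$ is a limit of the non-negative processes $\Lambda^{*T}$ and the weak solution is unique in law. The plan is to prove an increment moment bound
\[
\mathbb{E}|\Lambda^*_t-\Lambda^*_s|^p\le C_p\bigl(1+\|\phi\|_{t_0}^p\mathbb{E}|\Lambda^*_0|^p\bigr)|t-s|^{p\gamma},\qquad \gamma:=\delta\wedge\vartheta\wedge\tfrac{\beta-1}{2\beta},
\]
for all large $p$. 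Kolmogorov's criterion, in its quantitative Garsia--Rodemich--Rumsey form, then gives the Hölder modification of order $\gamma-1/p$ and the $L^p$ bound \eqref{eq:Holderpaths} on the Hölder seminorm. Letting $p\to\infty$ gives every order $<\gamma$. For small $p$, the bounds follow from the large-$p$ ones by Jensen's inequality. Claim~3 is then immediate from $\sup_t|\Lambda^*_t|\le|\Lambda^*_0\phi(0)|+t_0^a[\Lambda^*]_{a}$, combined with the $L^p$ Hölder bound.

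The increment is split term by term. For the initial-condition term, Assumption~\ref{ass:int_holregul}(iii) gives order $\delta$ for $\Lambda^*_0\phi$. For the remaining part $\Lambda^*_0(f_{\alpha,\lambda}*\phi)$, and for the drift $f_{\alpha,\lambda}*\theta$ with $\theta$ bounded by Assumption~\ref{assum:mu}, write the difference of convolutions as
\[
\int_s^t f_{\alpha,\lambda}(t-u)g(u)\,du+\int_0^s\bigl(f_{\alpha,\lambda}(t-u)-f_{\alpha,\lambda}(s-u)\bigr)g(u)\,du .
\]
The first piece is bounded by Hölder's inequality using $f_{\alpha,\lambda}\in L^{2\beta}$; the second by the $L^1$ modulus in Assumption~\ref{ass:int_holregul}(ii), which gives order $\vartheta$. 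For the stochastic term, apply BDG:
\[
\mathbb{E}|Z_t-Z_s|^p\le C_p\,\mathbb{E}\Bigl[\Bigl(\int_s^t f_{\alpha,\lambda}^2(t-u)\varsigma^2\Lambda^*_u\,du+\int_0^s\bigl(f_{\alpha,\lambda}(t-u)-f_{\alpha,\lambda}(s-u)\bigr)^2\varsigma^2\Lambda^*_u\,du\Bigr)^{p/2}\Bigr].
\]
Here $\varsigma$ is bounded. On the first integral, apply Hölder with exponents $\beta,\beta'$:
\[
\int_s^t f_{\alpha,\lambda}^2\Lambda^*_u\,du\le \|f_{\alpha,\lambda}\|_{L^{2\beta}}^2\Bigl(\int_s^t(\Lambda^*_u)^{\beta'}du\Bigr)^{1/\beta'} .
\]
Using Jensen and the marginal bound \eqref{eq:MargBounds}, this gives $\mathbb{E}[\cdot]^{p/2}\lesssim|t-s|^{p/(2\beta')}\sup_u\mathbb{E}(\Lambda^*_u)^{p/2}$. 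Since $\frac{1}{2\beta'}=\frac{\beta-1}{2\beta}$, this is exactly the exponent in the statement. On the second integral, use the $L^2$ modulus from (ii) together with a Minkowski/Jensen step, which gives $|t-s|^{p\vartheta}$. In both cases the moments of $\Lambda^*$ enter only through \eqref{eq:MargBounds}, which produces the factor $1+\|\phi\|^p_{t_0}\mathbb{E}|\Lambda^*_0|^p$ (after $\sqrt{x}\le 1+x$).

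The main obstacle is the second stochastic piece. The $L^2$ Hölder bound on $f_{\alpha,\lambda}$ must be combined with a random weight $\Lambda^*_u$ without losing the exponent. My first attempt is to bound the weight by $\sup_{u\le t_0}\Lambda^*_u$, but that is circular with Claim~3. So instead I would use Jensen with respect to the finite measure $\bigl(f_{\alpha,\lambda}(t-u)-f_{\alpha,\lambda}(s-u)\bigr)^2du$, whose mass is $\le C|t-s|^{2\vartheta}$. This moves the $p/2$ power inside onto $\Lambda^*_u$, so only the marginal bound \eqref{eq:MargBounds} is needed.
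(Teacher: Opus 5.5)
Your large-$p$ argument is essentially the paper's. You use the same split of $\Lambda^*_t-\Lambda^*_s$ into initial, drift and stochastic parts, and you treat the deterministic pieces with Assumption~\ref{ass:int_holregul}(ii)--(iii) and $f_{\alpha,\lambda}\in L^{2\beta}$. The stochastic term is handled by BDG together with \eqref{eq:MargBounds}, followed by Kolmogorov/GRR and the bound $\sup_t|\Lambda^*_t|\le|\Lambda^*_0\phi(0)|+t_0^a\|\Lambda^*\|_{C^a_{t_0}}$. The paper handles both stochastic pieces with the generalized Minkowski inequality $\bigl\|\int g(u)\Lambda^*_u\,du\bigr\|_{p/2}\le\int g(u)\|\Lambda^*_u\|_{p/2}\,du$. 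Your two Jensen steps are equivalent to this, so the second piece is not really an obstacle.

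The gap is the step ``for small $p$, the bounds follow from the large-$p$ ones by Jensen's inequality''. Applied unconditionally, for $p<\bar p$ Jensen only gives
\[
\mathbb{E}\bigl[\|\Lambda^*\|_{C^a_{t_0}}^p\bigr]\le\Bigl(\mathbb{E}\bigl[\|\Lambda^*\|_{C^a_{t_0}}^{\bar p}\bigr]\Bigr)^{p/\bar p}\le C\bigl(1+\|\phi\|_{t_0}^{\bar p}\,\mathbb{E}|\Lambda^*_0|^{\bar p}\bigr)^{p/\bar p}.
\]
Since $(\mathbb{E}|\Lambda^*_0|^{\bar p})^{p/\bar p}\ge\mathbb{E}|\Lambda^*_0|^{p}$, this right-hand side can exceed $1+\|\phi\|_{t_0}^p\mathbb{E}|\Lambda^*_0|^p$ by an unbounded factor as the law of $\Lambda^*_0$ varies. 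It is even infinite when $\Lambda^*_0\in L^p\setminus L^{\bar p}$. So you do not get \eqref{eq:Holderpaths} or \eqref{eq:supLpbound} with a constant independent of the law of $\Lambda^*_0$, which is what the statement asserts. Removing the high-moment requirement on $\Lambda^*_0$ is exactly the point the paper stresses in its remark after the theorem.

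The missing idea is to apply Jensen conditionally on $\Lambda^*_0$; this is the splitting lemma \cite[Lemma D.1]{JouPag22} that the paper uses.
\begin{enumerate}
\item Since $\Lambda^*_0\perp\!\!\!\perp W$ and the solution is unique in law, the conditional law of $\Lambda^*$ given $\Lambda^*_0=x$ is that of the solution $\Lambda^{*,x}$ started from the deterministic value $x$.
\item Take $\Phi$ to be the $a$-H\"older seminorm or $\sup_{t\le t_0}|\cdot|$. Your large-$\bar p$ estimate gives $\mathbb{E}[\Phi(\Lambda^{*,x})^{\bar p}]\le C(1+\|\phi\|_{t_0}^{\bar p}x^{\bar p})$, with $C$ independent of $x$.
\item Jensen, together with $(1+y)^r\le 1+y^r$ for $r=p/\bar p\le 1$, then gives $\mathbb{E}[\Phi(\Lambda^{*,x})^{p}]\le C^{p/\bar p}(1+\|\phi\|_{t_0}^{p}x^{p})$.
\item Integrating in $x$ against the law of $\Lambda^*_0$ yields the stated bounds.
\end{enumerate}
The same conditioning also gives Claim~1 without the moments of all orders of $\Lambda^*_0$ that your ``$p\to\infty$'' step uses.
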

	
\noindent {\bf Remark.}
	1.	According to \cite[Proposition 5.1]{Pages2024} (see also \cite[Proposition 6.1.5]{EGnabeyeu2025} ), for the \(\alpha\)-fractional integration kernel,  \( \vartheta \in (0, \alpha - \frac{1}{2}) \) and hence owing to the above theorem, the process  \( \Lambda^* \) is almost surely H\"older continuous of any order strictly less than \( \delta \wedge\alpha - \frac{1}{2}\wedge \frac{\beta-1}{2\beta} \).
	One can check
	(see, e.g. \cite[Example~2.1]{RiTaYa2020}) that the condition $\big({\cal K }^{int}_{\beta}\big)$ of the assumption~\ref{ass:int_holregul} on \(f_{\alpha,\lambda}\) (or equivalently, on the kernel \( K_\alpha \) ) is satisfied with $\beta \in \bigl(1,\frac{1}{2(\alpha-1)^{-}}\bigr)$. Hence, as $\alpha \in \bigl(\tfrac12, 1\bigr)$, one has \(\frac{\beta-1}{2\beta} = \tfrac12 - \tfrac{1}{2\beta} \leq \tfrac12 - (\alpha-1)^{-}=\alpha-\frac12 \) so that, the final regualrity becomes \(\delta \wedge\alpha - \frac{1}{2}\).

 \medskip
 \noindent 2. In the literature, the bounds in~\eqref{eq:Holderpaths} and~\eqref{eq:supLpbound} hold only when the starting random variable \( \Lambda^*_0 \in L^p(\P) \) for sufficiently large \( p \) (say, \( p \geq 2 \) in most cases), depending on the integrability characteristics of the kernels. 
 Surprisingly, this unusual restriction can be circumvented thanks to \cite[Lemma D.1]{JouPag22}, which is applied to a specific functional, as illustrated in the proof of the above theorem~\ref{Thm:regul} in section~\ref{subsect:proofMresult2}.
	
   \section{Proofs of the main results}\label{sect:proofMresult}
   \subsection{Proofs of Propositions~\ref{prop:Lhawkes_tightness} and~\ref{prop:tightIntensity} }\label{subsect:proofMresult1} 
   
   To prove the tightness of \( X^T \), we use \cite[Theorem~7.3]{Billingsley1999}, 
   which states that a  sequence of càdlàg stochastic processes $\left( X^T\right)_T$ defined on $[0\,,t_0]$ is tight provided the following two conditions hold:
   \begin{itemize}
   	\item[$\bullet$] For each \( \eta > 0 \) there exists \( a > 0 \) such that 
   	\(\displaystyle \limsup_{T \to \infty} \mathbb{P}\bigl( | X^T_0 | \geq a \bigr) \leq \eta\).
   	\item[$\bullet$] For any \(\varepsilon > 0\), \(\lim_{\delta \to 0^+} \limsup_{T \to \infty} 
   	\mathbb{P}\bigl( w(X^T, \delta) \geq \varepsilon \bigr) = 0,\)\footnote{This condition, together with the requirement that $\lim\limits_{R \to + \infty}\, \limsup\limits_T\, \mathbb{P}\left(\sup_{0\leq t\leq t_0}|X^T_t| > R \right) = 0\,$ , implies the \(\mathcal{C}\)-tightness of the sequence by Proposition~VI.3.26 in \cite{jacod2013limit}. }
   	where the modulus of continuity  \(\omega(f; \delta)\) of \(f\) is defined by
   	
   	\centerline{$
   		w(f, \delta) := 
   		\sup_{\substack{0 \leq s \leq t \leq t_0 \\ |t - s| \leq \delta}}
   		| f(t) - f(s) |, 
   		\qquad f \in D([0,t_0]), \ \text{for}\;\delta > 0.$}
   \end{itemize}
   If the maximal jump of \( X^T \), that is \(\displaystyle \sup_{0 \le t \le T} |\Delta X^T_t|\) goes to zero in probability as \( T \to \infty \),
   then \( (X^T)_{T>0} \) is \(\mathcal{C}\)-tight (see e.g. \cite[Proposition~VI.3.26]{jacod2013limit}).\\
   
   \noindent {\bf Proofs of Proposition~\ref{prop:Lhawkes_tightness}:}
   \noindent  {\sc Step~1} {\em ($\mathcal C$-tightness  of  \( (\tilde{N}^T)_{T > 0} \) and  \( (\tilde{\mathcal{I}}^{\Lambda^{*T}})_{T > 0} \) ):} The first condition above clearly holds. Note that, it follows from Proposition \ref{prop:estim}, that $ \sup_{T>0} \mathbb{E}[\tilde{N}^{*T}_t]= \sup_{T>0} \mathbb{E}[\tilde{\mathcal{I}}^{\Lambda^{*T}}_t] <+\infty$ and consequently by Markov's inequality,
   \[
   \lim_{r \to +\infty} \limsup_{T} \mathbb{P} \left( \sup_{0\leq t\leq t_0}|\tilde{N}^{*T}_t| > r \right) = \lim_{r \to +\infty} \limsup_{T \to \infty} \mathbb{P} \left( \sup_{0\leq t\leq t_0}|\tilde{\mathcal{I}}^{\Lambda^{*T}}_t| > r \right)=0.
   \]
   We now prove that \(\tilde{\mathcal{I}}^{\Lambda^{*T}}_t := \mathcal{I}^{I^T}_t + \mathcal{I}^{J^T}_t + \mathcal{I}^{K^T_1}_t + \mathcal{I}^{K^T_2}_t \) verifies the second condition.
   Proposition~\ref{prop:uniformCvgce}	prove the uniform convergence of the term \( \mathcal{I}^{I^T}_t + \mathcal{I}^{J^T}_t + \mathcal{I}^{K^T_1}_t\) towards the process \(\int_0^t \Lambda^*_0 \left(\phi(s)- \int_0^s f_{\alpha,\lambda}(s-u)\phi(u)du \right)ds + \int_0^t \int_0^s f_{\alpha,\lambda}(s-u)\theta(u)du\,ds \) 
   and therefore is tight. We then focus on the term \(\mathcal{I}^{K^T_2}_t = \int_0^t \int_0^s (1 - a_T)  R^{\prime T}_{-1}(T(s - u))\frac{dM^T_{Tu}}{\tilde\mu^T(\frac{u}{T})}\,ds\) and we first apply the stochastic Fubini theorem \cite[ Theorem 2.6]{Walsh1986}, to write:
   \begin{align*}
   	\mathcal{I}^{K^T_2}_t &=\int_0^t (1 - a_T) \left[ \int_0^s  R^{\prime T}_{-1}(T(s - u)) \, \frac{dM^T_{Tu} }{\tilde\mu^T(\frac{u}{T})} \, \right] ds=\int_0^t \left[ \int_0^s \sqrt{ \frac{T(1 - a_T)}{\tilde\mu^T(\frac{u}{T})}}  R^{\prime T}_{-1}(T(s - u)) \, d\tilde{M}^{*T}_u \, \right] ds \\
   	&= \int_0^t\int_u^t  \sqrt{\frac{T(1 - a_T)}{\tilde\mu^T(\frac{u}{T})}}  R^{\prime T}_{-1}(T(s - u)) \,  ds \, d\tilde{M}^{*T}_u=\int_0^t\int_0^{t-u}  \sqrt{\frac{T(1 - a_T)}{\tilde\mu^T(\frac{u}{T})}}  R^{\prime T}_{-1}(Ts) \,  ds \, d\tilde{M}^{*T}_u\\
   	&=  \int_0^t \int_0^{t-u} \underbrace{\sqrt{\frac{1}{T(1 - a_T)\tilde\mu^T(\frac{u}{T})}}}_{\varsigma^T(u)}\, \underbrace{ T(1 - a_T)R^{\prime T}_{-1}(Ts)}_{f^T(s)}\, ds \, d\tilde{M}^{*T}_u = \int_0^t \int_0^{t-u} f^T(s) \, ds \,\varsigma^T(u)\, d\tilde{M}^{*T}_u \\
   	&=  \int_0^t f^T(t-u) \int_0^{u}\,\varsigma^T(u) \,d\tilde{M}^{*T}_s \,  du=\int_0^t  f^T(t-u)\,\tilde{M}^{T}_u \,  du.
   \end{align*}
   where the penultimate equality comes from an integration by parts so that for \( 0 \leq s \leq t \leq s+\delta \leq t_0 \)
   \begin{align*}
   	\bigl|\mathcal{I}^{K^T_2}_t - \mathcal{I}^{K^T_2}_s\bigr| 
   	&= \left|
   	\int_0^t f^T(t-u)\,\tilde{M}^{T}_u \,  du 
   	-
   	\int_0^s f^T(t-u)\,\tilde{M}^{T}_u \,  du \right|\\
   	&\leq \left( \int_s^t f^T(t - u)\, du 
   	+ \int_0^s \bigl|f^T(t - u) - f^T(s - u)\bigr|\, du \right)
   	\sup_{u \leq t_0} |\tilde{M}^{T}_u|.
   \end{align*}
   Under Assumption~\ref{assum:Lhawkes} (1), the kernel \(\varphi\) is completely monotone, 
   and it follows from Theorem~5.4 in \cite{gripenberg1990} that \(f^T\) is decreasing. 
   Since \(|t - s| \leq \delta\), we get
   \begin{align*}
   	\bigl|\mathcal{I}^{K^T_2}_t - \mathcal{I}^{K^T_2}_s\bigr| &\leq
   	\left( \int_0^\delta f^T(u)\, du 
   	+ \int_0^s \bigl|f^T(t -s + u) - f^T( u)\bigr|\, du \right)
   	\sup_{u \leq t_0} |\tilde{M}^{T}_u|\\
   	&\leq
   	\left( \int_0^\delta f^T(u)\, du + \int_0^s f^T(u)\, du
   	-\int_{t-s}^t f^T(u)\, \right)
   	\sup_{u \leq t_0} |\tilde{M}^{T}_u|\leq 2\,
   	\left( \int_0^\delta f^T(u)\, du \right)
   	\sup_{u \leq t_0} |\tilde{M}^{T}_u|
   \end{align*}
   Using Markov's inequality and owing to the second claim of Proposition~\ref{prop:estim}, we deduce that
   \[
   \mathbb{P}\bigl(\omega(\mathcal{I}^{K^T_2};\delta) \geq \varepsilon\bigr)
   \leq \frac2\varepsilon\,
   \mathbb{E}\Bigl[\sup_{u \leq t_0} |\tilde{M}^{T}_u|\Bigr]\,
   \int_0^\delta f^T(u)\, du
   \leq K \int_0^\delta f^T(u)\, du
   \]
   for some positive constant \( K \), and we conclude using that \(\lim_{\delta \to 0} \limsup_{T \to \infty} 
   \int_0^\delta f^T(u)\, du = 0.\)\\
   We proceed likewise for \(\tilde{N}^{*T}\) (or call upon \cite[Proposition 3.4]{HorstXuZhang2024}) and thus conclude the tightness of the sequences \( \left( (\tilde{N}^{*T}_t)_{t \in [0,t_0]} \right)_{T \geq 0} \) and \( \left( (\tilde{\mathcal{I}}^{\Lambda^{*T}}_t)_{t \in [0,t_0]} \right)_{T \geq 0} \). %, both of them being increasing. \\
   Additionally, since the maximum jump size of \( \tilde{N}^{*T}_t \) and \( \tilde{\mathcal{I}}^{\Lambda^{*T}}_t\), that is $\frac{(1 - a_T) }{T \, c}$ 
   (see the Remark (3) on assumption \ref{assum:Lhawkes}), tends to zero, the \( \mathcal{C} \)-tightness of \( (\tilde{N}^T_t, \tilde{\mathcal{I}}^{\Lambda^{*T}}_t) \) follows from Proposition VI-3.26 in \cite{jacod2013limit}.
   
   \medskip
   \noindent  {\sc Step~2} {\em ($\mathcal C$-tightness  of $\tilde{M}^{*T}$ and \(\tilde{M}^T\)):}
   
   \noindent {\em (1).}
   \(\tilde{M}^{*T}\) is a local martingale, since it can be written in the form $\tilde{M}^{*T} =  \int_{0}^{tT}H_sdM^T_{s}$ where $H_s$ is a deterministic, predictable and bounded function, see \cite[Chapter 2]{Protter}. According to Theorem VI-4.13 in \cite[ p.~358]{jacod2013limit}, the $\mathcal C$-tightness of the predictable quadratic variation process guarantees the tightness of the sequence \( (\tilde{M}^{*T})_{T>0} \).
   It is easy to check that
   $\langle \tilde{M}^{*T}\rangle_t = \tilde{\mathcal{I}}^{\Lambda^{*T}}_t,$ which is $\mathcal C$-tight. This gives the tightness of $\tilde{M}^{*T}$. Additionally, the maximum
   jump size of $\tilde{M}^{*T}$, that is $\sqrt{\frac{(1 - a_T) }{T \, c}}$, vanishing as \(T\) goes to infinity, we obtain that $\tilde{M}^{*T}$ is $\mathcal C$-tight and so its limit is continuous. It will remain to show that the limit is in fact a martingale. 
   
   \medskip
   \noindent {\em (2).}
   \(\tilde{M}^T\) is a local martingale, since owing to Remark (3) on assumption \ref{assum:Lhawkes}, $\varsigma^T$ is a deterministic, predictable and bounded function, (see e.g. \cite[Chapter 2]{Protter}).
   It is easy to check that
   $\langle \tilde{M}^T\rangle_t $ is $\mathcal C$-tight, which garantees still owing to Theorem VI-4.13 in \cite[ p.~358]{jacod2013limit}, the tightness of the sequence \( (\tilde{M}^T)_{T>0} \).
   Additionally, since the maximum jump size of \(\tilde{M}^T \), that is $\frac{1 }{\nu\,T \, c}$, tends to zero, the \( \mathcal{C} \)-tightness of \( \tilde{M}^T \) follows from Proposition VI-3.26 in \cite{jacod2013limit}.
   From Assumption \ref{assum:Lhawkes} (2), we have the deterministic pointwise convergence $
   \varsigma^T(t) \longrightarrow \varsigma(t),
   $ and from the above $\mathcal C$-tightness  of $\tilde{M}^{*T}$, let \(\tilde{M}^{*}\)
   be an accumulation point. We deduce that the pair \( (\varsigma^T, \tilde{M}^{*T}) \) converges jointly in distribution to a certain \( (\varsigma, \tilde{M}^{*}) \) in the Skorokhod space \( D([0,t_0],\mathbb{R}_+\times\mathbb{R}) \).
   Now, by the classical result on stability of integral processes (see e.g. \cite[ Theorem VI.6.22]{jacod2013limit} or \cite{Kurtz1991}),we conclude that
   \(
   \int_0^t \varsigma^T(s) \, d\tilde{M}^{*T}_s \overset{\mathcal L }{\Rightarrow} \int_0^t \varsigma(s)\, d\tilde{M}^*_s =: \tilde{M}_t\), as \(T \to +\infty \) i.e.
   in distribution in the Skorokhod space \( D([0,t_0],\mathbb{R}) \). 
   
   \medskip
   \noindent We may conclude by noticing that, the three sequences \((\tilde{\mathcal{I}}^{\Lambda^{*T}})_{T>0}, (\tilde{N}^{*T})_{T>0} , (\tilde{M}^{*T})_{T>0}, \text{ and }  (\tilde{M}^{T})_{T>0}\) are $J_1(\R)$-tight, which implies the joint $J_1(\R^4)$-tightness of \( \Big(X^T =\left( \tilde{\mathcal{I}}^{\Lambda^{*T}}, \tilde{N}^{*T}, \tilde{M}^{*T} \right)\Big)_{T>0} \), see \cite[Lemma 3.2]{Whitt2007} or the joint $\mathcal C$-tightness, thanks to \cite[Corollary VI.3.33]{jacod2013limit}.\\
   
   \noindent  {\sc Step~3} {\em (Martingale convergence of $\tilde{\mathcal{I}}_t^{\Lambda^{*T}} - \tilde{N}^{*T}_t $):} 
   We have
   $\tilde{N}^{*T}_t -\tilde{\mathcal{I}}_t^{\Lambda^{*T}} = \frac{1-a_T}{T } \int_{0}^{tT} \frac{1}{\tilde\mu^T(\frac{s}{T^2})}dM^T_{s},$ 
   which  owing to \cite[Chapter II]{Protter} is still a local martingale, with bounded predictible quadratic variation. From Doob's martingale inequality ($L^p$ -Burkholder-Davis-Gundy (BDG) inequality for \(p=2\)), we get using equation~\eqref{eq:bounds_I} in the last line that:
   {\small
   	\begin{align*}
   		&\ \mathbb{E}\left[\underset{t \in [0,t_0]}{\sup}|\tilde{\mathcal{I}}_t^{\Lambda^{*T}}-\tilde{N}^{*T}_t |^2 \Big| \mathcal{F}_0 \right] = \left(\frac{1-a_T}{T}\right)^2\mathbb{E}\left[\underset{t \in [0,t_0]}{\sup}|\int_{0}^{tT} \frac{1}{\tilde\mu^T(\frac{s}{T^2})}dM^T_{s} |^2 \Big| \mathcal{F}_0 \right]\\ 
   		&\hspace{2cm}\leq 4\left(\frac{1-a_T}{T }\right)^2 \mathbb{E}\left[|\int_{0}^{Tt_0} \frac{1}{\tilde\mu^T(\frac{s}{T^2})}dM^T_{s} |^2 \Big| \mathcal{F}_0 \right] \leq 4\left(\frac{1-a_T}{T}\right)^2 \mathbb{E}\left[\int_{0}^{Tt_0} \frac{1}{\tilde\mu^T(\frac{s}{T^2})^2} \Lambda^T_{s} ds\Big| \mathcal{F}_0 \right]\\
   		&\hspace{2cm}\leq 4\left(\frac{1-a_T}{T\,c}\right)^2 \mathbb{E}[\mathcal{I}^{\Lambda^{T}}_{Tt_0}]\leq 4\left(\frac{1-a_T}{T\,c }\right)^2 \frac{K }{1-a_T} T t_0 =\frac{4}{c^2}\,K\,t_0\, \frac{1-a_T}{T }
   	\end{align*}
   }
   \noindent since $\tilde\mu^T$ is uniformly bounded on \([0,t_0]\). We then deduce that:
   $\mathbb{E}\big[\underset{t \in [0,t_0]}{\sup}|\tilde{\mathcal{I}}_t^{\Lambda^{*T}}-\tilde{N}^{*T}_t |^2\big]\leq  \frac{C}{T}.
   $
   This gives the uniform convergence to zero in probability of $\tilde{N}^{*T}_t -\tilde{\mathcal{I}}_t^{\Lambda^{*T}}$.
   
   \noindent  {\sc Step~4} {\em (Limit of $\tilde{M}^{*T}$ and $\tilde{M}^T$):} 	If \( X =\left( \tilde{\mathcal{I}}^{\Lambda^*}, \tilde{N}^{*}, \tilde{M}^{*}, \tilde{M} \right) \) is a possible limit point of \( X^T =\left( \tilde{\mathcal{I}}^{\Lambda^{*T}}, \tilde{N}^{*T}, \tilde{M}^{*T}, \tilde{M}^T \right) \), we know that the processes  $\left( \tilde{\mathcal{I}}^{\Lambda^*}, \tilde{N}^{*}, \tilde{M}^{*}, \tilde{M} \right)$ have continuous sample paths. Moreover, by \cite[Corollary~IX.1.19]{jacod2013limit},
   $\tilde{M}^{*}$ is a local martingale, and therefore $\tilde{M}$ is also a local martingale.

   \medskip
   \noindent {\em (1).} However the local martingale \( \tilde M^{*T} \) has predictable quadratic variation \(	\langle \tilde{M}^{*T} \rangle_t = \tilde{\mathcal{I}}^{\Lambda^{*T}}_t = \int_0^t \Lambda^{*T}_s \, ds,\) \(t \geq 0.\) By the the continuous mapping theorem, together with \cite[Theorem VI-6.26]{jacod2013limit}, we get that as \( T \to +\infty \), $\tilde{M}^{*}$ is the limit of $\tilde{M}^{*T}$ and  $\langle \tilde{M}^{*} \rangle=\tilde{\mathcal{I}}^{\Lambda^*}$. Specifically, $\tilde{\mathcal{I}}^{\Lambda^{*}}_t$ is the limit of  $\langle \tilde{M}^{*T}  \rangle$ as \( T \to +\infty \), that is,
   \(\langle \tilde{M}^{*T}  \rangle_t  \overset{\mathcal L }{\Rightarrow} \tilde{\mathcal{I}}^{\Lambda^{*}}_t =\int_0^t \Lambda^{*}_s \, ds \quad \text{in } D([0,t_0], \mathbb{R}).\)
   Consequently, the (local) martingale \( \tilde{M}^{*} \) has predictable quadratic variation \(\langle \tilde{M}^{*} \rangle_t = \tilde{\mathcal{I}}^{\Lambda^{*}}_t =\int_0^t \Lambda^{*}_s \, ds, \quad t \geq 0.\) By Fatou's lemma, the expectation of $\langle \tilde{M}^{*} \rangle$ is finite and therefore $\tilde{M}^{*} $ is a martingale.
   
   \medskip
   \noindent {\em (2).}
   The sequence \( (\tilde{M}^T)_{T>0} \) weakly converges along a subsequence to a continuous local martingale \(\tilde M_t := \int_{0}^{t} \varsigma(s) \, d\tilde M^{*}_s\), with predictable quadratic variation:
   \( \langle \tilde{M} \rangle_t = \int_{0}^{t} \varsigma^2(s) d\langle \tilde{M}^{*} \rangle_s =\int_{0}^{t} \varsigma^2(s) \Lambda^*(s) ds.\)
   It is in fact a true martingale  (bounded quadratic variation as \(\varsigma\) is a bounded borel function). \hfill$\Box$\\

   \noindent {\bf Proof of Proposition~\ref{prop:tightIntensity}:} The uniform integrability follows from the first claim of Proposition~\ref{prop:estim}. From Corollary~\ref{Corol:LimitMeasure} and Lemma~\ref{Lm:cvgcezero}~(2), it follows that the sequence \((I^{T} + J^{T} + K^T_1)_{T>0}\) is \(\mathcal{C}\)-tight. Consequently, to establish the \(\mathcal{C}\)-tightness of the rescaled intensity processes \( (\Lambda^{*T})_{T > 0} \), \cite[Corollary~VI.3.33(1), p.353]{jacod2013limit} shows that it remains to establish the \(\mathcal{C}\)-tightness of the stochastic integral processes  \(( K^T_2)_{T>0}\). We observe that, \(( K^T_2)_{T>0}\) can be rewritten \(K^T_{2,t}=\frac{\nu}{\lambda}\int_0^t f^T(t-u)d\tilde{M}^T_u\). To prove the tigtness of \(K^T_{2}\), we observe that the resolvents and their first derivatives
   are smooth and bounded functions, so that the function \(f^T\) and satisfy
   
   \centerline{$
   	f^T(t - u) = f^T(0) + \int_u^t f^{\prime T}(r - u) \, dr, 
   	\quad T >0, \ t \ge u \ge 0.
   	$}
   \noindent with \(f^T(0) = T(1-a_T)a_T \varphi(0)\). Consequently, up to the multiplicative constant \(\frac{\nu}{\lambda}\), we may write
   \begin{align*}
   	K^T_{2,t} &= \int_0^t f^T(0)\;d\tilde{M}^{T}_u + \int_0^t \int_u^{t} f^{\prime T}(r - u)\,dr\, d\tilde{M}^{T}_u= f^T(0)\;\tilde{M}^{T}_t + \int_0^t \int_0^{t-u} f^{\prime T}(s) \, ds\, d\tilde{M}^{T}_u \\
   	&=  f^T(0)\;\tilde{M}^{T}_t + \int_0^t  f^{\prime T}(t-u)\,\tilde{M}^{T}_u \,  du =  f^T(0)\;\tilde{M}^{T}_t + K^T_{3,t}.
   \end{align*}
   where the penultimate equality follows from an integration by parts. It thus boils down from Proposition~\ref{prop:Lhawkes_tightness}~(1) to show the tightness of \(K^T_{3,t}\), and hence we consider, for \(0 \le s \le t \le s+\delta \le t_0\), the relevant increments:
   \begin{align*}
   	\bigl|K^T_{3,t} - K^T_{3,s}\bigr| 
   	&= \left|
   	\int_0^t f^{\prime T}(t-u)\,\tilde{M}^{T}_u \,  du 
   	-
   	\int_0^s f^{\prime T}(t-u)\,\tilde{M}^{T}_u \,  du \right|\\
   	&\leq \left( \int_s^t \left|f^{\prime T}(t - u)\right|\, du 
   	+ \int_0^s \bigl|f^{\prime T}(t - u) - f^{\prime T}(s - u)\bigr|\, du \right)
   	\sup_{u \leq t_0} |\tilde{M}^{T}_u|.
   \end{align*}
   Under Assumption~\ref{assum:Lhawkes} (1), as the kernel \(\varphi\) is completely monotone, it follows still from Theorem~5.4 in \cite{gripenberg1990} that \(f^{\prime T}\) is negative and non-decreasing. 
   Since \(|t - s| \leq \delta\), we get
   \begin{align*}
   	&\,\bigl|K^T_{3,t} - K^T_{3,s}\bigr| \leq
   	\left( \int_0^{t-s} \bigl|f^{\prime T}(u)\bigr|\, du 
   	+ \int_0^s \bigl|f^{\prime T}(t -s + u) - f^{\prime T}( u)\bigr|\, du \right)
   	\sup_{u \leq t_0} |\tilde{M}^{T}_u|\\
   	&\hspace{1cm}\leq
   	\left( \int_0^\delta \bigl|f^{\prime T}(u)\bigr|\, du - \int_0^s f^{\prime T}(u)\, du
   	+\int_{t-s}^t f^{\prime T}(u)\, du \right)
   	\sup_{u \leq t_0} |\tilde{M}^{T}_u|\\
   	&\hspace{1cm}\leq
   	\left( \int_0^\delta \bigl|f^{\prime T}(u)\bigr|\, du + \int_0^s \bigl|f^{\prime T}(u)\bigr|\, du
   	-\int_{t-s}^t \bigl|f^{\prime T}(u)\bigr|\, du \right)
   	\sup_{u \leq t_0} |\tilde{M}^{T}_u|\leq 2\,
   	\left( \int_0^\delta \bigl|f^{\prime T}(u)\bigr|\, du \right)
   	\sup_{u \leq t_0} |\tilde{M}^{T}_u|.
   \end{align*}
   Using Markov's inequality and owing to the second claim of Proposition~\ref{prop:estim}, we deduce that
   \[
   \mathbb{P}\bigl(\omega(K^T_3;\delta) \geq \varepsilon\bigr)
   \leq \frac2\varepsilon\,
   \mathbb{E}\Bigl[\sup_{u \leq t_0} |\tilde{M}^{T}_u|\Bigr]\,
   \int_0^\delta \bigl|f^{\prime T}(u)\bigr|\, du
   \leq K \int_0^\delta \bigl|f^{\prime T}(u)\bigr|\, du
   \]
   for some positive constant \( K \), and we conclude using that \(\lim_{\delta \to 0} \limsup_{T \to \infty} 
   \int_0^\delta \bigl|f^{\prime T}(u)\bigr|\, du = 0.\)
   This prove the tightness of \(K^T_3\) and hence that of the sequence \( \left( (K^T_{2,t})_{t \in [0,t_0]} \right)_{T > 0}.\)
   Additionally, since the maximum jump size of \(K^T_2\) is bounded by a constant multiple of that of \(\tilde{M}^T \), which tends to zero as \(T\to+\infty\), the \( \mathcal{C} \)-tightness of \(K^T_2\) follows from Proposition VI-3.26 in \cite{jacod2013limit}. Consequently, all limit points of \(K_2^T\) have continuous paths.
   
   \medskip
   \noindent Note that alternatively, calling upon Propositions~\ref{Lem:Lhawkes}~(2) and~\ref{prop:Lhawkes_tightness}~(1) and~(3), and reasoning similarly to Step~$(3)$ of the proof of Theorem~\ref{Thm:Limit}, we can formally deduce the weak convergence:
   \(
   \int_0^t f^T(t-u)d\tilde{M}^T_u \overset{\mathcal L }{\Rightarrow} \int_0^t  f_{\alpha,\lambda}(t-s)\, d\tilde{M}_s\), as \(T \to +\infty \)
   in distribution in the Skorokhod space \( D([0,t_0],\mathbb{R}) \). That is, the sequence \( (K^T_2)_{T>0} \) weakly converges (along a subsequence) to the process \( \int_{0}^{t} f_{\alpha,\lambda}(t-s)\, d\tilde M_s\) which is almost surely continuous. This also ensures that  \(( K^T_2)_{T>0}\) is \(\mathcal{C}\)-tight. \hfill $\Box$
   
   	\begin{lemma}\label{Lm:cvgcezero}
   	Let \(t_0 > 0\) and \(t \in [0,t_0]\). The following results hold: %we have the following almost sure convergences:
   	\begin{enumerate}
   		\item \(\int_0^{t} T (1-a_T) R^{\prime T}_{-1}(T(t-s)) 
   		\left( \frac{1}{\tilde{\mu}^T \left( \frac{s}{T} \right)} 
   		- \frac{1}{\tilde{\mu}^T \left( \frac{t}{T} \right)} \right) ds 
   		\;\underset{T \to +\infty}{\rightarrow}\; 0 \quad \text{a.s.}\)\\
   		Moreover, this convergence holds uniformly on \([0,t_0]\).
   		
   		\item 
   		Define \(K^T_{1,t} := \int_0^t (1 - a_T)  R^{\prime T}_{-1}(T(t - u)) 
   		\left( \frac{1}{\tilde\mu^T\left(\frac{t}{T}\right)} 
   		- \frac{1}{\tilde\mu^T\left(\frac{u}{T}\right)} \right) dM^T_{Tu}.\)
   		Then \(K^T_1\) is \(\mathcal{C}\)-tight, and the same holds for 
   		\(\mathcal{I}^{K^T_1}\). Moreover, for every \(t \in [0,t_0]\)
   		\[
   		\mathcal{I}^{K^T_1}_t:=\int_0^t (1 - a_T) 
   		\left[ \int_0^s R^{\prime T}_{-1}(T(s - u)) 
   		\left( \frac{1}{\tilde\mu^T\left(\frac{s}{T}\right)} 
   		- \frac{1}{\tilde\mu^T\left(\frac{u}{T}\right)} \right) dM^T_{Tu} \right] ds
   		\;\underset{T \to +\infty}{\rightarrow}\; 0 \quad \text{a.s.}
   		\]
   	\end{enumerate}
   \end{lemma}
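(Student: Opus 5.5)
For claim (1) I will work with $h^T(u) := 1/\tilde\mu^T(u/T)$ and split the integral as $\int_0^t f^T(t-s)\,\bigl(h^T(s)-h^T(t)\bigr)\,ds$, where $f^T(s)=T(1-a_T)R^{\prime T}_{-1}(Ts)$ as in Proposition~\ref{Lem:Lhawkes}. The key input is Assumption~\ref{assum:Lhawkes}~(2): $\tilde\mu^T$ is Lipschitz with a bounded first derivative, and by Remark~(3) it is bounded below by $c>0$ uniformly in $T$. Hence
\[
|h^T(s)-h^T(t)| \le \frac{\|(\tilde\mu^T)'\|_\infty}{c^2}\,\frac{|t-s|}{T}.
\]
Combining this with $\int_0^{t_0} f^T(u)\,du \le \sup_T \int_0^\infty f^T = \sup_T a_T \le 1$, I get
\[
\sup_{t\le t_0}\Bigl|\int_0^t f^T(t-s)\bigl(h^T(s)-h^T(t)\bigr)\,ds\Bigr| \le \frac{C\,t_0}{T}\to 0,
\]
which is uniform and deterministic, so ``a.s.'' is trivial. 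If the derivative bound of $\tilde\mu^T$ is only uniform after the rescaling $u\mapsto u/T$, I will instead use uniform continuity together with the weak convergence of $m^T$ (Proposition~\ref{Lem:Lhawkes}~(2)) and a tightness-of-mass argument near $s=t$.

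For claim (2) I write $K^T_{1,t}=\int_0^t (1-a_T)R^{\prime T}_{-1}(T(t-u))\,g^T(t,u)\,dM^T_{Tu}$, with $g^T(t,u)=h^T(t)-h^T(u)$, and express it against $\tilde M^{*T}$, the same way as the computation of $\mathcal I^{K^T_2}$:
\[
K^T_{1,t}=\int_0^t f^T(t-u)\,\sqrt{\tfrac{\tilde\mu^T(u/T)}{T(1-a_T)}}\;g^T(t,u)\,d\tilde M^{*T}_u .
\]
The factor $\sqrt{\tilde\mu^T(u/T)/(T(1-a_T))}$ is bounded by $C\varsigma^T$-type constants. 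Indeed, $T(1-a_T)\tilde\mu^T \sim T^{1-\alpha}\tilde\mu^T\cdot T^\alpha(1-a_T)$ converges to a positive limit, so this factor is uniformly bounded.

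The plan for $K^T_1$ is then as follows.

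First, I will compute its second moment using $d\langle \tilde M^{*T}\rangle = \Lambda^{*T}_u\,du$ and the Lipschitz bound above:
\[
\mathbb E|K^T_{1,t}|^2 \le \frac{C}{T^2}\,\sup_u \mathbb E[\Lambda^{*T}_u]\int_0^t f^T(t-u)^2\,(t-u)^2\,du .
\]
The key point is that $f^T(s)^2 s^2 \le \bigl(\sup_s s f^T(s)\bigr)\,f^T(s)$, and $s f^T(s)$ is bounded, because $f^T$ is decreasing (Theorem~5.4 of \cite{gripenberg1990}) with mass at most $1$, so $s f^T(s)\le\int_0^s f^T\le 1$. Using the uniform integrability of $\Lambda^{*T}$ (Proposition~\ref{prop:tightIntensity}; its mean is bounded by the computation of Proposition~\ref{prop:estim}), this gives $\mathbb E|K^T_{1,t}|^2 = O(T^{-2})$ uniformly in $t$.

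Second, I will integrate: $\mathbb E\sup_{t\le t_0}|\mathcal I^{K^T_1}_t| \le \int_0^{t_0}\mathbb E|K^T_{1,s}|\,ds = O(1/T)$. Choosing a subsequence with $\sum 1/T_k<\infty$ (or working on the Skorokhod-representation space as the paper does), Borel--Cantelli gives a.s.\ uniform convergence of $\mathcal I^{K^T_1}$ to $0$. Since $\mathcal I^{K^T_1}$ is continuous and converges to $0$ in $L^1$ uniformly, it is $\mathcal C$-tight.

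Third, for the $\mathcal C$-tightness of $K^T_1$ itself, I will mimic the proof of Proposition~\ref{prop:tightIntensity}. I will decompose $g^T(t,u)=(h^T(t)-h^T(0))-(h^T(u)-h^T(0))$, so that
\[
K^T_{1,t}=(h^T(t)-h^T(0))\,A^T_t - B^T_t,
\]
where both $A^T$ and $B^T$ are of the form $\int_0^t f^T(t-u)\,d(\text{martingale})$, with integrands bounded respectively by $1$ and by $C/T$. The argument of Proposition~\ref{prop:tightIntensity} (the expansion $f^T(t-u)=f^T(0)+\int f^{\prime T}$, integration by parts and the modulus-of-continuity bound) yields tightness of each piece. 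The prefactors $h^T(t)-h^T(0)=O(1/T)$ and the $1/T$ integrand bound make $K^T_1\to0$ in probability in sup norm, which gives $\mathcal C$-tightness since the limit $0$ is continuous.

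I expect the main obstacle to be controlling $f^T(0)=T(1-a_T)a_T\varphi(0)\sim T^{1-\alpha}$ in that decomposition. It must be beaten by the $1/T$ factor, which works since $\alpha>0$; but the jumps of $\tilde M^{T}$ scale also enter, and I would check that $T^{1-\alpha}\cdot T^{-1}\to0$ handles the maximal-jump criterion of Proposition~VI.3.26 in \cite{jacod2013limit}.
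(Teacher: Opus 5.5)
Your proof of claim (1) is the paper's argument in a cleaner form. Both rest on the \(T\)-uniform bound \(|h^T(s)-h^T(t)|\le C|t-s|/T\), which needs a uniform Lipschitz constant for \(\tilde\mu^T\) and the lower bound \(c\) of Remark~(3) after Assumption~\ref{assum:Lhawkes}. Both also use \(\int_0^\infty f^T=a_T\le 1\). Your direct \(Ct_0/T\) estimate makes the paper's \(\epsilon\)--\(\eta\) split unnecessary.

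Your fallback would not work. Suppose only \(t\mapsto\tilde\mu^T(t/T)\) is uniformly Lipschitz, with \(h^T\to h\). Then there is no \(1/T\) gain, and the weak convergence of \(m^T\) sends the integral to \(\int_0^t f_{\alpha,\lambda}(t-s)\bigl(h(s)-h(t)\bigr)\,ds\), which is nonzero in general. So the statement genuinely hinges on that gain. Note also that the scaling \(T^{1-\alpha}\tilde\mu^T(t/T)\to\lambda/(\nu^2\delta\varsigma^2(t))\) in Assumption~\ref{assum:Lhawkes}~(2) forces \(\tilde\mu^T(t/T)\to 0\). That is incompatible with the lower bound \(c\) which both you and the paper use. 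For non-constant \(\varsigma\), that scaling even makes \(h^T(s)-h^T(t)\) of order \(T^{1-\alpha}\).

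For claim (2) your route is genuinely different, and it is sound.

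The paper works with its kernel \(g^T(t,u)=f^T(t-u)\bigl(\tilde\mu^T(u/T)/\tilde\mu^T(t/T)-1\bigr)\). It integrates by parts in \(u\), exploiting \(g^T(t,t)=0\), and then estimates the modulus of continuity. It treats \(\mathcal I^{K^T_1}\) separately, by stochastic Fubini and BDG at fixed \(t\).

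You instead freeze \(h^T\) at \(u=0\) and prove the stronger fact \(\sup_{t\le t_0}|K^T_{1,t}|\to 0\) in probability. This makes \(\mathcal C\)-tightness automatic, so the maximal-jump check is superfluous. You also obtain \(\mathbb E\sup_t|\mathcal I^{K^T_1}_t|=O(1/T)\) uniformly in \(t\). Combined with Borel--Cantelli along a summable subsequence, this is the meaningful form of the a.s.\ claim.

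The cost of freezing at \(0\) is that you discard the diagonal cancellation. You therefore have to beat \(f^T(0)=T(1-a_T)a_T\varphi(0)\sim T^{1-\alpha}\), and you only get \(O_P(T^{-\alpha})\). The paper's representation keeps the factor \(|u-t|\). Since \(\int_0^t r|f^{\prime T}(r)|\,dr=\int_0^t f^T-tf^T(t)\le 1\) for decreasing \(f^T\), it would give \(\sup_t|K^T_{1,t}|=O_P(1/T)\) directly.

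Do not justify the bounds on \(A^T\) and \(B^T\) by "the argument of Proposition~\ref{prop:tightIntensity}". That argument needs \(\lim_{\delta\to 0}\limsup_T\int_0^\delta|f^{\prime T}|=0\). This is false, since \(\int_0^\delta|f^{\prime T}|=f^T(0)-f^T(\delta)\ge f^T(0)-1/\delta\to\infty\).

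You only need a crude bound from integration by parts and the monotonicity of \(f^T\). Set \(\xi^T(u):=\sqrt{\tilde\mu^T(u/T)/(T(1-a_T))}\) and \(N^T:=\int_0^\cdot\xi^T\,d\tilde M^{*T}\), so that \(\mathbb E\sup_t|N^T_t|^2\le C\). Then \(\sup_t|A^T_t|\le 2f^T(0)\sup_t|N^T_t|\), and the same holds for \(B^T\) with an extra factor \(O(1/T)\). Combined with your \(1/T\) prefactors, this gives \(\sup_t|K^T_{1,t}|=O_P(f^T(0)/T)\), which is exactly the obstacle you flagged at the end.

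A minor point: \(\xi^T\) is bounded because \(\tilde\mu^T\) is bounded above and \(T(1-a_T)\to\infty\). The convergence of \(T(1-a_T)\tilde\mu^T\) that you cite controls \(\varsigma^T\), not \(\xi^T\).
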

   \noindent {\bf Proof:}
   \smallskip
   \noindent  {\sc Step~1} {\em Proof of $\int_0^{t} T (1-a_T) R^{\prime T}_{-1}(T(t-s)) 
   	\left( \frac{1}{\tilde{\mu}^T \left( \frac{s}{T} \right)} 
   	- \frac{1}{\tilde{\mu}^T \left( \frac{t}{T} \right)} \right) ds \underset{T \to +\infty}{\rightarrow} 0 \quad \text{almost surely}.$.} \\
   The map \(\tilde{\mu}^T\) is Lipschitz on \([0,t_0]\), and hence it is uniformly continuous.  
   Consequently, for any \(\epsilon > 0\), there exists \(\eta > 0\) such that for all \(s, t \in [0,t_0]\) with \(|s-t| < \eta\), i.e. \(s \in [t-\eta, t]\) we have and \(\big| \frac{1}{\tilde{\mu}^T \left( \frac{t}{T}\right)}-\frac{1}{\tilde{\mu}^T \left( \frac{s}{T}\right)} \big| \leq \epsilon. \) Moreover, for every \(s \in [0,t-\eta]\) there exists a constant \([\tilde{\mu}^{T}]_{\mathrm{Lip}}=:C >0\) such that \(\Big|\frac{1}{\tilde{\mu}^T \left( \frac{t}{T} \right)} - \frac{1}{\tilde{\mu}^T \left( \frac{t-s}{T} \right)} \Big| \leq \frac{C}{c^2}\frac{s}{T}\) (see Remark (3) on assumption \ref{assum:Lhawkes} ).
   We then have:
   {\small 
   	\begin{align*}&\,\Big|\int_0^{t} T (1-a_T) R^{\prime T}_{-1}(T(t-s)) 
   		\left( \frac{1}{\tilde{\mu}^T \left( \frac{s}{T} \right)} 
   		- \frac{1}{\tilde{\mu}^T \left( \frac{t}{T} \right)} \right) ds\Big| \leq \int_0^{t-\eta} T (1-a_T) R^{\prime T}_{-1}(Ts) 
   		\left( \Big|\frac{1}{\tilde{\mu}^T \left( \frac{t}{T} \right)} - \frac{1}{\tilde{\mu}^T \left( \frac{t-s}{T} \right)} \Big| \right) ds\\
   		&\hspace{1.9cm}+\epsilon \int_0^{\eta} T (1-a_T) R^{\prime T}_{-1}(Ts) ds\leq \epsilon \int_0^{\eta} T (1-a_T) R^{\prime T}_{-1}(Ts) ds +  \frac{C}{c^2}\,\frac{t_0}{T}\int_0^{t-\eta} T (1-a_T) R^{\prime T}_{-1}(Ts) ds \leq 2\epsilon 
   	\end{align*}
   }
   for large enough T since owing to the claim (2) of Proposition \ref{Lem:Lhawkes}, we have \(\int_0^a T(1 - a_T) R^{\prime T}_{-1}(Tu) \underset{T \to +\infty}{\rightarrow} \int_0^af_{\alpha, \lambda}(s)ds =1- R_{\alpha, \lambda}(a) < 1\) for the \(\alpha\)-fractional integration kernel.

   \smallskip
   \noindent  {\sc Step~2} {\em Proof of $\mathcal{C}-$ tightness.} First note that, setting \(	g^T(t, s) = f^T(t - s) \left( \frac{\tilde{\mu}^T\left( \frac{s}{T} \right)}{\tilde{\mu}^T\left( \frac{t}{T} \right)} - 1 \right)\) for all \( 0 \leq s \leq t \), we have \(K^T_{1,t} :=\int_{0}^{t} g^T(t,u)\, d\tilde M^{T}_u\). Owing to the fundamental theorem of calculus, we can write \(g^T(t, u) = g^T(s, u) + \int_s^t \partial_r g^T(r, u) \, dr\)
   for all \( 0 \leq u \leq s \leq t \). Consequently, noticing that \(g^T(u, u) =0\), we have for \(s=u\):
   \begin{align*}
   	K^T_{1,t} &=\int_{0}^{t} g^T(t,u)\, d\tilde M^{T}_u= \int_0^t g^T(u,u)\,d\tilde{M}^{T}_u + \int_0^t \int_u^{t} \partial_r g^T(r, u) \,dr\, d\tilde{M}^{T}_u\\
   	&= \int_0^t \int_0^{t-u} \partial_r g^T(u+r, u) \, dr\, d\tilde{M}^{T}_u = - \int_0^t \partial_u g^T(t, u) \,\tilde{M}^{T}_u \, du.
   \end{align*}
   where the penultimate equality comes from an integration by parts so that, the relevant increments reads:
   \begin{align*}
   	\forall s, t \in [0,t_0], \quad 	\bigl|K^T_{1,t} - K^T_{1,s}\bigr|
   	&= \left|
   	\int_0^t \partial_u g^T(t, u)\,\tilde{M}^{T}_u \,  du 
   	-
   	\int_0^s \partial_u g^T(s, u)\,\tilde{M}^{T}_u \,  du \right|\\
   	&\leq \left( \int_s^t \left|\partial_u g^T(t, u)\right|\, du 
   	+ \int_0^s \bigl|\partial_u g^T(t, u) - \partial_u g^T(s, u)\bigr|\, du \right)
   	\sup_{u \leq t_0} |\tilde{M}^{T}_u|.
   \end{align*}
   Define \(	C_1 := \sup_{T>0} \sup_{u \in [0,t_0]} 
   \left| \tilde{\mu}^T\!\left( \frac{u}{T} \right) \right|,
   \;
   C_2 := \sup_{T>0} \sup_{u \in [0,t_0]}
   \left| \tilde{\mu}^{\prime T}\!\left( \frac{u}{T} \right) \right|.\) 
   One checks that for
   \begin{align*}
   	&\, 0 \leq s,u \leq t \leq t_0, \quad  \bigl| \partial_u g^T(t,u) \bigr|
   	\leq
   	\frac{[\tilde{\mu}^T]_{\mathrm{Lip}}}{T c}
   	\, |u-t| \, \bigl| f^{\prime T}(t-u) \bigr|
   	+
   	\frac{C_2}{T c}
   	\, \bigl| f^T(t-u) \bigr|, \quad \text{and}\\
   	& \bigl| \partial_u g^T(t,u) - \partial_u g^T(s,u) \bigr|
   	\leq\;
   	\frac{[\tilde{\mu}^T]_{\mathrm{Lip}}}{T c}
   	\, |u-t| \,
   	\bigl| f^{\prime T}(t-u) - f^{\prime T}(s-u) \bigr| +
   	\frac{C_2}{T c}
   	\, \bigl| f^T(t-u) - f^T(s-u) \bigr| \\
   	&\hspace{6cm}+
   	\frac{[\tilde{\mu}^T]_{\mathrm{Lip}}}{T c^2}
   	\, |t-s|
   	\left(
   	C_1 \bigl| f^{\prime T}(t-u) \bigr|
   	+
   	\frac{C_2}{T} \bigl| f^T(t-u) \bigr|
   	\right).
   \end{align*}
   (See~\ref{assum:Lhawkes} and Remark~(3) stated therein on the uniform boundedness (non-degeneracy) of 
   $\tilde\mu^T$ away from zero).
   Consider the increments, for \(0 \le s \le t \le s+\delta \le t_0\) and let
   \(
   C := \max\{\frac{\sup_{T>0} [\tilde{\mu}^T]_{\mathrm{Lip}}}{c^2} (2ct_0 \vee C_1\vee C_2), \frac{C_2}{c} \}.
   \)
   \begin{align*}
   	&\, \bigl| K^T_{1,t} - K^T_{1,s} \bigr|
   	\leq \frac{C}{T}
   	\Biggl(
   	\int_s^t f^T(t-u)\, du
   	+ \int_0^s \bigl| f^T(t-u) - f^T(s-u) \bigr|\, du
   	+ \frac{\delta}{T} \int_0^s f^T(s-u)\, du
   	\Biggr)
   	\sup_{u \leq t_0} \bigl| \tilde{M}^T_u \bigr| \\
   	&\qquad\qquad\quad
   	+ \frac{C}{T}
   	\Biggl(
   	\int_s^t \bigl| f^{\prime T}(t-u) \bigr|\, du
   	+ \int_0^s \bigl| f^{\prime T}(t-u) - f^{\prime T}(s-u) \bigr|\, du
   	+ \delta \int_0^s \bigl| f^{\prime T}(s-u) \bigr|\, du
   	\Biggr)
   	\sup_{u \leq t_0} \bigl| \tilde{M}^T_u \bigr| \\
   	&\leq \frac{C}{T}
   	\Biggl(
   	\int_0^{t-s} \bigl( |f^{\prime T}(u)| + f^T(u) \bigr)\, du
   	+ \int_0^s \bigl( |f^{\prime T}(u)| + f^T(u) \bigr)\, du
   	- \int_{t-s}^t \bigl( |f^{\prime T}(u)| + f^T(u) \bigr)\, du 
   	+ \delta \int_0^s \left( |f^{\prime T}(u)| \right. \\
   	&\left. + \frac{1}{T} f^T(u) \right)\, du
   	\Biggr)
   	\sup_{u \leq t_0} \bigl| \tilde{M}^T_u \bigr| \leq \frac{2C}{T}
   	\left(
   	\int_0^{\delta} \bigl( |f^{\prime T}(u)| + f^T(u) \bigr)\, du
   	+ \delta \int_0^{t_0} \bigl( |f^{\prime T}(u)| + \frac{1}{T} f^T(u) \bigl)\, du
   	\right)
   	\sup_{u \leq t_0} \bigl| \tilde{M}^T_u \bigr|.
   \end{align*}
   where the last inequality follows by proceeding as in Step~(1) of the proof of Proposition~\ref{prop:Lhawkes_tightness} and in the proof of Proposition~\ref{prop:tightIntensity}, and by using that $|t - s| \leq \delta$.
   We obtain, by Markov's inequality and the second claim of
   Proposition~\ref{prop:estim}, that
   \[
   \mathbb{P}\bigl(\omega(K^T_1;\delta) \geq \varepsilon\bigr)
   \leq \frac{2C}{T\varepsilon}\,
   \mathbb{E}\Bigl[\sup_{u \leq t_0} |\tilde{M}^{T}_u|\Bigr]\,
   \left(
   \int_0^{\delta} \bigl( |f^{\prime T}(u)| + f^T(u) \bigr)\, du
   + \delta \int_0^{t_0} \bigl( |f^{\prime T}(u)| + \frac{1}{T} f^T(u) \bigl)\, du
   \right).
   \]
   Since \(\lim_{\delta \to 0} \limsup_{T \to \infty} \frac{1}{T} \left(
   \int_0^{\delta} \bigl( |f^{\prime T}(u)| + f^T(u) \bigr)\, du
   + \delta \int_0^{t_0} \bigl( |f^{\prime T}(u)| + \frac{1}{T} f^T(u) \bigl)\, du
   \right) = 0,\) and by\\ Proposition~\ref{prop:estim}, \(\mathbb{E}\Bigl[\sup_{u \leq t_0} |\tilde{M}^{T}_u|\Bigr]  < \infty,\) the tightness of $\{K^T_{1}\}_{T>0}$ is established.
   Additionally, since the maximum jump size of \(K^T_1\) is bounded by a constant multiple of that of \(\tilde{M}^T \), which tends to zero, the \( \mathcal{C} \)-tightness of \(K^T_1\) follows from Proposition VI-3.26 in \cite{jacod2013limit}.
   Since \(K_1^T\) is \(\mathcal{C}\)-tight, all its limit points have continuous
   paths. As the integration operator is continuous on
   \(\mathcal{C}([0,t_0])\), the \(\mathcal{C}\)-tightness of
   \(\mathcal{I}^{K_1^T}\) follows from the continuous mapping theorem\cite[Theorem~VI.1.14]{jacod2013limit}.
   
   \smallskip
   \noindent  {\sc Step~2} {\em Proof of $ \mathcal{I}^{K^T_1}_t= \int_0^t (1 - a_T) \left[ \int_0^s  R^{\prime T}_{-1}(T(s - u)) \, \big(\frac{1}{\tilde\mu^T(\frac{s}{T})}-\frac{1}{\tilde\mu^T(\frac{u}{T})}\big) \, dM^T_{Tu} \,\right] ds \underset{T \to +\infty}{\rightarrow} 0$.} Applying the stochastic Fubini theorem \cite[Theorem~2.6]{Walsh1986}, we may write
   \begin{align*}
   	&\, \mathcal{I}^{K_1^T}_t
   	= \int_0^t \int_0^{s} g^T(s,u)\, d\tilde M^{T}_u \, ds
   	= \int_0^t \int_0^{t-u} g^T(u+s,u)\, ds \, d\tilde M^{T}_u.\quad
   	\text{Moreover, we have } \\
   	& \text{the bound: }	\left| \int_0^{t-u} g^T(u+s,u)\, ds \right|
   	\leq [\tilde{\mu}^{T}]_{\mathrm{Lip}} \frac{t_0}{T c} \int_0^{t-u} f^T(s)\, ds
   	\leq [\tilde{\mu}^{T}]_{\mathrm{Lip}} \frac{t_0}{T c} (1-a_T) \sup_{s \le t_0} \bigl| R^{T}_{-1}(T s) \bigr|.
   \end{align*}
   By the Burkholder--Davis--Gundy inequality, it follows that for all
   \(t \in [0,t_0]\),
   \begin{align*}
   	\mathbb{E}\bigl[ |\mathcal{I}^{K_1^T}_t| \bigr]
   	&\leq 2 \int_0^t
   	\left( \int_0^{t-u} g^T(u+s,u)\, ds \right)^2
   	(\varsigma^T(u))^2 \, d\tilde{\mathcal{I}}^{\Lambda^{*T}}_u \leq [\tilde{\mu}^{T}]_{\mathrm{Lip}} \frac{2 t_0}{c^2}
   	\sup_{s \le t_0} \bigl| R^{T}_{-1}(T s) \bigr|
   	\left( \mathbb{E}\bigl[ \tilde{\mathcal{I}}^{\Lambda^{*T}}_{t_0} \bigr] \right)^{\frac12}
   	\frac{1}{T^2}.
   \end{align*}
   Consequently, by Markov’s inequality and the continuous Borel--Cantelli lemma, we conclude that the convergence \(\mathcal{I}^{K^T_1}_t \underset{T \to +\infty}{\rightarrow} 0\) holds almost surely. \hfill$\Box$ 
   
   \subsection{Proofs of Theorems~\ref{Thm:Limit} and~\ref{Thm:regul}}\label{subsect:proofMresult2}
   \medskip
   \noindent {\bf{Proof of the Theorem~\ref{Thm:Limit}:} \em{Dynamics of the limit points.}}
   The weak convergence of a sequence of integrated diffusion processes does not in general
   imply the convergence of those diffusion processes. Even if the sequence of integrated processes
   would converge, one cannot obtain the limit of the sequence \((\Lambda^{*T})_{T>0}\)  by differentiating the limit
   of the sequence \((\tilde{\mathcal{I}}^{\Lambda^{*T}})_{T>0}\) unless the $\mathcal C$-tightness of the former sequence has been established.
   
   \noindent  {\sc Step~1:} Based on all preceding results, we have \(\left( \tilde{\mathcal{I}}^{\Lambda^{*T}}, \tilde{N}^T, \tilde{M}^{*T},\tilde M^{T} \right) \overset{\mathcal L }{\Rightarrow} \left( \tilde{\mathcal{I}}^{\Lambda^*}, \tilde{N}^*, \tilde{M}^{*}, \tilde M \right)\) in the Skorokhod space \( D([0,t_0]; \mathbb{R}_+^2 \times \mathbb{R}^2).\)
   By the Skorokhod representation theorem \footnote{The Skorokhod representation theorem yields the existence of c\`adl\`ag processes \(\big(\hat{X}^{k}\big)_{k\geq1}\) and $\hat{X}$ all defined on a common probability space $(\hat{\Omega}, \hat{\mathcal{F}}, \hat{\mathbb{P}})$, such that for any \(k \ge 1\), $ \hat{X}^{k} \overset{d}{=} \hat{X}^{T_k}$, \(\hat{X} \overset{d}{=} X\) and for  any \(\omega \in \hat{\Omega}\), \(\hat{X}^{k}(\omega) \underset{k \to \infty}{\to} \hat{X}(\omega)\) in the Skorokhod topology, i.e. and $\hat{X}^{k} \to \hat{X}$
   	in $C([0, t_0]; \mathbb{R}^3)$ as $k \to \infty$, $\hat{\mathbb{P}}$-a.s.} \cite[Theorem 2.7]{IkedaWatanabe1989}, \cite[ Theorem 1.6.7]{Billingsley1999}, we may without loss of generality, assume that the preceding limit holds almost surely. By the continuity of the limiting processes \(X = \left( \tilde{\mathcal{I}}^{\Lambda^*}, \tilde{N}^*, \tilde M^*, \tilde M \right) \) the convergence holds almost surely and uniformly. That is, for any \( t_0 \geq 0 \), \(\sup_{t \in [0, t_0]} |\tilde{\mathcal{I}}^{\Lambda^{*T}}_t - \tilde{\mathcal{I}}^{\Lambda^*}_t| \underset{T \to +\infty}{\rightarrow} 0,\)
   \[
   \sup_{t \in [0, t_0]} |\tilde{N}^{*T}_t - \tilde{N}^*_t| \underset{T \to +\infty}{\rightarrow} 0,\; \sup_{t \in [0, t_0]} |\tilde{M}^{*T}_t - \tilde M^*_t| \underset{T \to +\infty}{\rightarrow} 0\;
   \textit{and} \; 
   \sup_{t \in [0, t_0]} |\tilde{M}^{T}_t - \tilde M_t| \underset{T \to +\infty}{\rightarrow} 0 \quad \text{a.s.}.
   \]	
   \noindent  {\sc Step~2} {\em (Martingale):} 
   In addition to the limit of the sequence \( (\tilde M^{T})_{T >0} \) established above, the Dambis-Dubin-Schwarz theorem gives the
   existence of a Brownian motion \(W\)\footnote{Note that even if $\langle \tilde{M}^* \rangle_\infty$ is not equal to $\infty \quad \text{a.s}$ , this Brownian motion exists up to an enlargement of the filtration.} such that \(\tilde M^*_t = W_{\tilde{\mathcal{I}}^{\Lambda^{*}}_t} \). By the martingale representation theorem (see Theorem 7.1 in \cite[ p.~84]{IkedaWatanabe1989}), the continuous martingale \(\tilde M^*_t \) can be represented as \(\tilde M^*_t = \int_0^t \sqrt{\Lambda^*_s} \, dW_s, \; t \geq 0.\) Moreover, as \( \langle \tilde{M} \rangle_t = \int_{0}^{t} \varsigma^2(s) \Lambda^*(s) ds\), still by the martingale representation theorem, the continuous martingale \(\tilde{M}\) can be represented as  \(\tilde{M}_t = \int_0^t \varsigma(s) \sqrt{\Lambda^*_s} \, dW_s, \; t \geq 0.\)\\
   
   \noindent  {\sc Step~3} {\em (Limit identification):} 
   The first claim of this third point comes directly from the definition of $\mathcal C$-tightness and the fact that each \(\tilde{\mathcal{I}}^{\Lambda^{*}}\) is non-decreasing,
   nonnegative, and starts from 0 .
   
   \noindent We now prove the weak convergence of the sequence \( (\tilde{\mathcal{I}}^{\Lambda^{*T}})_{T>0} \). 
   With the almost sure convergence established in Proposition~\ref{prop:uniformCvgce} above, we are now in a position to show that \( \mathcal{I}^{K^{T}} \) converges almost surely in \( \mathcal{C}([0,t_0]; \mathbb{R}) \) to the limit
   \[ Y_t := \frac\nu\lambda\int_0^t f_{\alpha,\lambda}(t - s) \tilde M_s \, ds = \frac\nu\lambda\int_0^t f_{\alpha,\lambda}(t - s) \left( \int_0^s \varsigma(u)\sqrt{\Lambda^*_u} \, dW_u \right)\, \quad t \geq 0.\]
   Setting \(F_{\alpha,\lambda}(t):= \int_0^t f_{\alpha,\lambda}(s) ds \), the above can by rewritten, owing to an integration by part (IbP):
   \[Y_t = \frac\nu\lambda\int_0^t f_{\alpha,\lambda}(t - s) \left( \int_0^s \varsigma(u)\sqrt{\Lambda^*_u} \, dW_u \right)\,ds = \frac\nu\lambda\int_0^t F_{\alpha,\lambda}(t - s) \varsigma(s)\sqrt{\Lambda^*_s} \, dW_s \quad t \geq 0.\]
   For the term \( K^T \), since \( \mathcal{I}^{K^T}_t = \mathcal{I}^{K^T_1}_t + \mathcal{I}^{K^T_2}_t \) with 
   \( \mathcal{I}^{K^T_1}_t \underset{T \to +\infty}{\rightarrow} 0 \) still by proposition~\ref{prop:uniformCvgce}, 
   it remains to prove that 
   \( \mathcal{I}^{K^T_2} \underset{T \to +\infty}{\overset{\text{a.s.}}{\rightarrow}} Y \) 
   in \( \mathcal{C}([0,t_0]; \mathbb{R}) \). We write:
   \begin{align*}
   	\mathcal{I}^{K^T_2}_t &= \int_0^t (1 - a_T) \left[ \int_0^s  R^{\prime T}_{-1}(T(s - u)) \, \frac{dM^T_{Tu}}{\tilde\mu^T(\frac{u}{T})} \,\right] ds = \int_0^t \int_0^s \frac{1 - a_T}{\sqrt{\tilde\mu^T(\frac{u}{T})}} R^{\prime T}_{-1}(T(s - u)) \cdot \frac{dM^T_{Tu}}{\sqrt{\tilde\mu^T(\frac{u}{T})}}ds \\
   	&= \frac\nu\lambda\int_0^t\int_0^s  T(1 - a_T) R^{\prime T}_{-1}(T(s - u)) \varsigma^T(u)d\tilde M^{*T}_u ds =\frac\nu\lambda \int_0^t\int_0^s  T(1 - a_T) R^{\prime T}_{-1}(T(s - u)) d\tilde M^{T}_u ds
   \end{align*}
   Then, applying the stochastic Fubini theorem \cite[ Theorem 2.6]{Walsh1986},\cite{Veraar2012}, we write:
   \begin{align*}
   	\mathcal{I}^{K^T_2}_t &=\frac\nu\lambda \int_0^t\int_0^s  T(1 - a_T) R^{\prime T}_{-1}(T(s - u)) d\tilde M^{T}_u ds =\frac\nu\lambda \int_0^t\int_u^t  T(1 - a_T) R^{\prime T}_{-1}(T(s - u))  ds d\tilde M^{T}_u\\
   	&=  \frac\nu\lambda\int_0^t \int_0^{t-u}  T(1 - a_T)R^{\prime T}_{-1}(Ts)ds d\tilde M^T_{u}  =\frac\nu\lambda \int_0^t F^T(t-s) d\tilde M^T_{s},  \; \text{where we set}
   \end{align*}
   \(F^T(t):= \int_0^t T(1 - a_T)R^{\prime T}_{-1}(Ts)ds \). By integration by parts, from the end of the first line, we also write:
   \begin{align*}
   	\mathcal{I}^{K^T_2}_t = \frac\nu\lambda\int_0^t\int_u^t  T(1 - a_T) R^{\prime T}_{-1}(T(s - u))  ds d\tilde M^{T}_u=  \frac\nu\lambda\int_0^t  T(1 - a_T)R^{\prime T}_{-1}(T(t-s)) \tilde M^T_{s} ds
   \end{align*}
   By the triangle inequality, we have \(\left| \mathcal{I}^{K^T}_t - Y_t \right| 
   \leq \left| \mathcal{I}^{K^T_1}_t \right| + \left| \delta^{T, \text{approx}}_t \right| 
   + \left| \delta^{T, \text{mart}}_t \right|,\)
   where the individual components of the discrepancy or error terms are defined as follows (\(f^{T}(\cdot):=T(1 - a_T) R^{\prime T}_{-1}(T\cdot)\)):
   {\small
   	\begin{align*}
   		\delta^{T, \text{approx}}_t
   		:= \frac\nu\lambda\int_0^t \left( f^{T}(t - s) - f_{\alpha,\lambda}(t - s) \right) \tilde M_s ds \quad \text{and}\quad
   		\delta^{T, \text{mart}}_t 
   		:= \frac\nu\lambda\int_0^t f^{T}(t - s) \left( \tilde M^T_s - \tilde M_s \right) ds
   	\end{align*}
   }
   \noindent Using the second claim of Proposition~\ref{Lem:Lhawkes} and the uniform almost sure convergence from Equation~\eqref{eqCvgce}, we obtain for $\delta^{T, \text{mart}}_t$, as $T \to \infty$,
   \begin{equation}
   	\sup_{t \in [0, t_0]} \left|\delta^{T, \text{mart}}_t \right| 
   	\leq \frac\nu\lambda\left( \int_0^{t_0} f^{T}(t - s) \, ds \right)
   	\cdot \sup_{t \in [0, t_0]} \left| \tilde M^T_t - \tilde M_t \right| \underset{T \to +\infty}{\overset{\text{a.s.}}{\rightarrow}} 0.
   \end{equation}
   For the second error term $\delta^{T,\text{approx}}$, using the fact that $\tilde M$ is also $L^1$-bounded and continuous, and reasoning as in Step~1 of the proof of Proposition~\ref{prop:Lhawkes_tightness}, we prove the tightness of $(\delta^{T,\text{approx}})_{T \geq 0}$. 
   Note that at this stage, we already have at least the \(L^2-\) convergence of \(\delta^{T, \text{approx}}_t\) to \(0\). Indeed, by an integration by parts, we can write \(\delta^{T, \text{approx}}_t = \frac\nu\lambda\int_0^t (F^T(t-s)-F_{\alpha,\lambda}(t-s)) d\tilde M_{s}.\) Then,
   
   \[
   \mathbb{E} \left[ \left( \int_0^t \left(F^T(t - s)-F_{\alpha,\lambda}(t-s) \right) d\tilde M_{s} \right)^2 \right]
   \leq \|\varsigma\|^2_{t_0} \mathbb{E}[\mathcal{I}^{\Lambda^{*}}_{t_0}] \| F^T-F_{\alpha,\lambda}\|^2_{t_0} \underset{T \to +\infty}{\rightarrow} 0
   \]
   thanks to the claim (2) of the Proposition \ref{Lem:Lhawkes} and the first claim of Proposition \ref{prop:Lhawkes_tightness} (tightness of \(\mathcal{I}^{\Lambda^{*T}}_t\)).
   To prove the almost sure convergence for each \( t \geq 0 \), we know from Proposition \ref{Lem:Lhawkes} that the finite measure \(m^T\) with density function \(f^{T}(\cdot) =T(1 - a_T) R^{\prime T}_{-1}(T\cdot)\) converges weakly to the finite measure \(m^*\)
   with density function $f_{\alpha, \lambda}$, i.e. $m^{T}(ds)\Rightarrow m^*(ds)$ weakly and hence, by the continuity of $\tilde M$, we have
   \begin{eqnarray*}
   	\int_0^t f^{T}(Ts) \tilde{M}(t-s)\,ds 
   	= \int_0^t \tilde{M}(t-s) m^{T}(ds) 
   	\overset{\rm a.s.}\longrightarrow \int_0^t \tilde{M}(t-s) m^*(ds) 
   	= \int_0^t f_{\alpha, \lambda} \tilde{M}(t-s)\,ds
   \end{eqnarray*}
   which immediately yields that $\delta^{T, \text{approx}}_t \overset{\rm a.s.}\to 0$ as $T\to\infty$ and hence $\delta^{T, \text{approx}} \Rightarrow 0$ weakly in the space $\mathcal{C}([0,t_0];\mathbb{R})$. 
   Based on preceding results, we conclude that $\mathcal{I}^{K^T} \Rightarrow Y$ weakly in $\mathcal{C}([0,t_0];\mathbb{R})$ and also \(\sup_{t\in[0,T]} \big| \mathcal{I}^{K^T}_t- Y_t \big| \overset{\rm a.s.}\to 0 .\)
   From this and the definition of \( \mathcal{I}^{K^*} \), we conclude that:
   \begin{align*}
   	\int_0^t K^*_s \, ds =: \mathcal{I}^{K^*}_t &\overset{\text{a.s.}}{=}
   	\frac1\lambda \int_0^t f_{\alpha,\lambda}(t - s) \left( \int_0^s \varsigma(u)\sqrt{\Lambda^*_u} \, dW_u \right)\, ds := Y_t \\
   	&=\frac1\lambda\int_0^t F_{\alpha,\lambda}(t - s) \varsigma(s)\sqrt{\Lambda^*_s} \, dW_s = \frac1\lambda \int_0^t \int_0^s f_{\alpha,\lambda}(s - r) \varsigma(r)\sqrt{\Lambda^*_r} \, dW_r \, ds.
   \end{align*}
   where, the penultimate equality comes from an integration by part and the last equality is deduced in the same way as for the proof of Theorem 3.2 in \cite{JaissonRosenbaum2016}.
   
   \noindent Therefore, letting \( T \to \infty \) in the equation \(\tilde{\mathcal{I}}^{\Lambda^{*T}}_t = \mathcal{I}^{I^T}_t + \mathcal{I}^{J^T}_t + \mathcal{I}^{K^T}_t\), and using Proposition \ref{prop:Lhawkes_tightness} (2), it is not difficult to see that \(\tilde{\mathcal{I}}^{\Lambda^{*T}}\) converges weakly to \( \tilde{\mathcal{I}}^{\Lambda^{*}}\) in \( D([0,t_0]; \mathbb{R}_+) \), where \(  \tilde{\mathcal{I}}^{\Lambda^{*}} \) is given as follows: %is the unique solution of the ODE: 
   \begin{equation} \label{eq:LimitIntegratedProcess}
   	\tilde{\mathcal{I}}^{\Lambda^{*}}_t = \int_0^t \left( \phi(s) - (f_{\alpha,\lambda}* \phi)(s )\right) ds 
   	+ \int_0^t \left( \int_0^s f_{\alpha,\lambda}(s - u) \theta(u) \, du \right) ds 
   	+ \frac{\nu}{\lambda} \int_0^t \int_0^s f_{\alpha,\lambda}(s - r) \varsigma(r)\sqrt{\Lambda^*_r} \, dW_r \, ds.
   \end{equation}
   We next want to identify the desired limit of the integrand processes by diffferentiation.
   More precisely, suppose that the sequence \( \{\Lambda^{*T}\}_{T \geq 1} \) converges weakly in \( D([0, t_0]; \mathbb{R}_+) \) to a process \( \Lambda^{*} \). Then, this implies weak convergence in \( L^1_{\mathrm{loc}}([0, t_0]; \mathbb{R}_+) \), and consequently, the sequence of integrated processes \(\tilde{\mathcal{I}}^{\Lambda^{*T}}_t := \int_0^t \Lambda^{*T}_s \, ds, \quad t \in [0, t_0],\)
   also converges weakly in \( D([0, t_0]; \mathbb{R}_+) \) to the random variable \(\tilde{\mathcal{I}}^{\Lambda^{*}}_t := \int_0^t \Lambda^{*}_s \, ds, \quad t \in [0, t_0].\)
   That is, as \( T \to \infty \),
   \begin{equation}
   	\Lambda^{*T} \overset{\mathcal L }{\Rightarrow} \Lambda^{*} \quad \text{in } D([0, t_0]; \mathbb{R}_+) \quad \Rightarrow \quad \tilde{\mathcal{I}}^{\Lambda^{*T}} \overset{\mathcal L }{\Rightarrow} \tilde{\mathcal{I}}^{\Lambda^{*}} := \int_0^{\cdot} \Lambda^{*}_s \, ds \quad \text{in } D([0, t_0]; \mathbb{R}_+). 
   \end{equation}
   Moreover, as the integrand or limiting process \( \Lambda^{*} \) is continuous as follows from the \( \mathcal{C} \)-tightness established in Proposition~\ref{prop:tightIntensity}, it can be recovered by differentiating the limit \( \tilde{\mathcal{I}}^{\Lambda^{*}} \), i.e. \(	\Lambda^{*}_t = \frac{d}{dt} \tilde{\mathcal{I}}^{\Lambda^{*}}_t, \; t \in [0, t_0].\)
   Consequently, the equality  or SDE in ~\eqref{eq:limitInt} satisfied by \(\Lambda^{*}\) follows by differentiating both sides of the preceding equality~\eqref{eq:LimitIntegratedProcess} with respect to \( t \). We deduce the expression~\eqref{eq:limitThm} stated in the theorem directly as a consequence of Proposition~\ref{prop:wiener_hopf} in the particular case of the \(\alpha\)-fractional integration kernel (see also~\eqref{eq:limitIntAffine2}).
   
   \medskip
   \noindent The weak uniqueness in law follows directly from \cite[Theorem 3.6]{EGnabeyeuPR2025}, which provides the exponential-affine transform formula for the measure Laplace transform of \(\Lambda^{*}\). Applying this result to the specific choice \(\mu_{t_1,\dots,t_n}(ds) = \sum_{i=1}^{n} u_i \delta_{t_n - t_i}(ds)\), where \(0 \leq t_1 < \dots < t_n\), \(u_1, \dots, u_n \in \mathbb{R}_-\), and \(n \in \mathbb{N}\), yields the Laplace transform \(\mathbb{E}\left[\exp\left(\sum_{i=1}^{n} u_i\, \Lambda^{*}_{t_i}\right)\right]\)
   which uniquely determines the law of \(\Lambda^{*}\).
   
   \medskip
   \noindent  {\sc Step~4} {\em (Moment control):}  Let's now control the moment or prove the moment estimate for \(\Lambda^*\):\\
   Taking expectations on both sides of equation\eqref{eq:limitInt} and using linearity and boundedness assumptions on the kernels, we get:
   \[\mathbb{E} \left[ |\Lambda^*_t| \right] = \mathbb{E} \left[ \Lambda^*_t \right] =  \mathbb{E} \left[ \Lambda^*_0 \right]\big(\phi(t)- \int_0^t f_{\alpha, \lambda}(t-s)\phi(s)ds\big) + \int_0^t f_{\alpha, \lambda}(t-s)\theta(s)ds\,, \quad t \geq 0.\]
   The integrability of \(f_{\alpha, \lambda}\) is sufficient for the boundedness of the first moment.
   If  \(\lim_{t \to +\infty} \theta (t) = \theta_\infty \) and \(\lim_{t \to +\infty} \phi (t) = \phi_\infty \), then, owing to the fact that \(f_{\alpha, \lambda}\)  is a probability density, we have :
   \(\lim_{t \to +\infty} \mathbb{E} \left[ \Lambda^*_t \right] = \theta_\infty\)
   and 
   \begin{align*}
   	\sup_{t \in [0, t_0]} \mathbb{E} \left[ |\Lambda^*_t| \right] &\leq  \mathbb{E} \left[ |\Lambda^*_0| \right] \|\phi\|_{t_0} \sup_{t \in [0, t_0]} | 1+(f_{\alpha,\lambda} * 1)(t)|  + \sup_{t \in [0, t_0]}  \int_0^t f_{\alpha, \lambda}(t-s)|\theta(s)|ds\,\\
   	&\leq  C (1 + \|\phi\|_{t_0} \mathbb{E} \left[ |\Lambda^*_0| \right]  )
   \end{align*}
   where \( C > 0 \) is a constant depending on \( \theta \) and \( f_{\alpha,\lambda} \). Let \(p\geq2\), by the triangle inequality, we write:
   \[\Big\||\Lambda^*_{t}|\Big\|_{p} \leq \Big\| |\big((\phi - f_{\alpha,\lambda} * \phi)(t)\big) \Lambda^*_0| \Big\|_p + \big|\int_0^t f_{\alpha, \lambda}(t-s)\theta(s)ds\,\big| + \left\| |\int_0^{t}f_{\alpha,\lambda}(t-u)\varsigma(u)\sqrt{\Lambda^*_{u}}\mathrm{d}W_u |\right\|_p\]  
   where, the second term come from the non-randomness of the term involved and its continuity under assumption \ref{assum:mu} implies its boundeness  in \([0, t_0]\) by a positive constant K and we have:
   $$\sup_{t \in [0, t_0]}\Big\| |\big((\phi - f_{\alpha,\lambda} * \phi)(t)\big) \Lambda^*_0| \Big\|_p \leq 	\Big\| | \Lambda^*_0| \Big\|_p \|\phi\|_{t_0}  \sup_{t \in [0, t_0]}| 1+(f_{\alpha,\lambda} * 1)(t)|  \leq C^{\prime} \|\phi\|_{t_0} \Big\| | \Lambda^*_0| \Big\|_p  $$
   Combining the $L^p$-BDG and the generalized Minkowski inequality yields for the last term,
   \begin{align*}
   	&\; \left\| |\int_0^{t}f_{\alpha,\lambda}(t-u)\varsigma(u)\sqrt{\Lambda^*_{u}}\mathrm{d}W_u |\right\|_p \le C^{BDG}_p \Big(\int_0^tf^2_{\alpha,\lambda}(t-u) \varsigma^2(u)\big\||\sqrt{\Lambda^*_{u}}|^2\big\|_\frac{p}{2} du\Big)^{1/2} \\
   	&\hspace{3cm}\le C^{BDG}_p\|\varsigma^2\|_{\infty}\sup_{u \in [0, t_0]}\Big\||\Lambda^*_{u}|\Big\|^{\frac12}_\frac{p}{2} \Big(\int_0^{+\infty}f^2_{\alpha,\lambda}(u) du\Big)^{1/2} \le C^{BDG}_p C_{p,\varsigma} \sup_{u \in [0, t_0]}\Big\||\Lambda^*_{u}|\Big\|^{\frac12}_\frac{p}{2}.
   \end{align*}
   where in the second inequality we used that \(f_{\alpha, \lambda} \in L^2(\text{Leb}_1)\) and the fact that \(\varsigma\) is a borel bounded function. Combining all bounds, we obtain:
   \begin{equation}\label{eq:final_bd}
   	\sup_{t \in [0, t_0]}	\Big\||\Lambda^*_{t}|\Big\|_{p} \leq C^{\prime}\|\phi\|_{t_0} \Big\| | \Lambda^*_0| \Big\|_p + K + C^{\prime}(p)\sup_{u \in [0, t_0]}\Big\||\Lambda^*_{u}|\Big\|^{\frac12}_\frac{p}{2}
   \end{equation} 
   We can 
   now proceed either by induction or alternatively, noticing that from equation~\eqref{eq:final_bd},
   there exists a constant \(C(p)\) such that:
   
   \(\qquad \hspace{2cm}1 + \sup_{t \in [0, t_0]}\mathbb{E} \left[ \left| \Lambda^*_t \right|^{p} \right]
   \leq \left( 1 + \|\phi\|^p_{t_0}\mathbb{E} \left[ \left| \Lambda^*_0 \right|^{p} \right] \right) C(p) \left(1 + \sup_{t \in [0, t_0]}\mathbb{E} \left[ \left| \Lambda^*_t \right|^{\frac{p}{2}} \right] \right)\)
   
   \noindent Letting \( n \in \mathbb{N} \) be the smallest integer such that \(1 \leq  \frac{p}{2^n} \leq 2 \), we obtain
   \begin{align*}
   	1 + \sup_{t \in [0, t_0]}\mathbb{E} \left[ \left| \Lambda^*_t \right|^{p} \right] &\leq \prod_{k=0}^{n-1} \left( 1 + \|\phi\|^{\frac{p}{2^k}}_{t_0}\mathbb{E} \left[ \left| \Lambda^*_0 \right|^{\frac{p}{2^k}} \right] \right)C\left(\frac{p}{2^k}\right) \left(1 + \sup_{t \in [0, t_0]}\mathbb{E} \left[ \left| \Lambda^*_t \right|^{p/2^n}\right]\right)\\
   	&\leq \prod_{k=0}^{n-1} \left( 1 + \|\phi\|^{\frac{p}{2^k}}_{t_0}\mathbb{E} \left[ \left| \Lambda^*_0 \right|^{\frac{p}{2^k}} \right] \right)C\left(\frac{p}{2^k}\right) \left(1 + \sup_{t \in [0, t_0]}\left(\mathbb{E} \left[ \left| \Lambda^*_t \right|^{2}\right]\right)^{\frac{p}{2^{n+1}}}\right)\\
   	&\leq \prod_{k=0}^{n-1} \left( 1 + \left(\|\phi\|^{p}_{t_0}\mathbb{E} \left[ \left| \Lambda^*_0 \right|^{p} \right] \right)^\frac{1}{2^k} \right)C\left(\frac{p}{2^k}\right) \left(1 + \sup_{t \in [0, t_0]}\mathbb{E} \left[ \left| \Lambda^*_t \right|^{2}\right]\right)
   \end{align*}
   Thus, we can use the previous estimate to find
   \begin{align*}
   	&\;	1 + \sup_{t \in [0, t_0]}\mathbb{E} \left[ \left| \Lambda^*_t \right|^{2}\right] \leq ( 1 + \|\phi\|^2_{t_0}\mathbb{E} [ \left| \Lambda^*_0 \right|^{2} ] ) C(2)(1 + \sup_{t \in [0, t_0]}\mathbb{E} [ \left| \Lambda^*_t \right|]) \leq K C(2)(1+C) (1 + \|\phi\|_{t_0} \mathbb{E} \left[ |\Lambda^*_0| \right]  )\\
   	&\hspace{1.8cm}\leq K \cdot C(2)(1+C) \left(1 + \left( \|\phi\|_{t_0}^p \mathbb{E} \left[ |\Lambda^*_0|^p \right] \right)^\frac1p  \right) \leq K\cdot C(2)(1+C) \left(1 + \left( \|\phi\|_{t_0}^p \mathbb{E} \left[ |\Lambda^*_0|e the^p \right] \right)^\frac{1}{2^n}  \right) \\
   	&\,\text{Consequently,} \;	1 + \sup_{t \in [0, t_0]}\mathbb{E} \left[ \left| \Lambda^*_t \right|^{p} \right]
   	\leq \prod_{k=0}^{n} \left( 1 + \left(\|\phi\|^{p}_{t_0}\mathbb{E} \left[ \left| \Lambda^*_0 \right|^{p} \right] \right)^\frac{1}{2^k} \right) \left( K \cdot C(2)(1+C) \prod_{k=0}^{n-1} C\left(\frac{p}{2^k}\right) \right) \\
   	&\hspace{5.2cm}\leq \tilde C\left(p\right) \left( 1 + \|\phi\|^{p}_{t_0}\mathbb{E} \left[ \left| \Lambda^*_0 \right|^{p} \right]  \right)
   \end{align*}	
   Hence the moment bound holds for all \(p \ge 2\).
   The \(L^{p}\)-marginal bound for all \(p>0\) extends by noticing that (in particular this is valid for \(0<p<2\)) \(\sup_{t \in [0,t_0]} \mathbb{E}\!\left[\, |\Lambda_t^{*}|^{p} \,\right]
   \le
   \mathbb{E}\!\left[\, \sup_{t \in [0,t_0]} |\Lambda_t^{*}|^{p} \,\right],\)
   which is a direct consequence of Jensen's inequality (or monotonicity of expectation),
   and by using the claim~(3) of Theorem~\ref{Thm:regul}.
   %	applying the splitting argument from \cite[Lemma D.1 (Splitting lemma)]{JouPag22} with $\bar{p}=2$ and the functional  $\Phi(x,y) = \sup_{t \in [0,t_0]} |x(t)|$. 
   This proves the assertion and we are done.\hfill $\Box$
   
   \medskip
   \noindent {\bf Proof of Theorem \ref{Thm:regul}:}.
   As a consequence of Theorem \ref{Thm:Limit}, we derive that:
   $$ \sup_{t \geq 0} \Big\| |\Lambda^*_t | \Big\|_p \leq C_{p,t_0} \cdot \left( 1 + \|\phi\|_{t_0}\Big\| |\Lambda^*_0 | \Big\|_p \right) < +\infty. $$
   \noindent  {\sc Step~1} {\em (Kolmogorov's continuity criterion).} 
   Now, we can establish H\"older continuity by the Kolmogorov's Continuity theorem. Let \( p\geq2 \) be given. One writes  for $s,\,t\ge 0$ with $s\le t$ and owing to equation ~\ref{eq:limitInt}:
   \[\Lambda^*_t-\Lambda^*_s = \big((\phi - f_{\alpha,\lambda} * \phi)(t)-(\phi - f_{\alpha,\lambda} * \phi)(s)\big) \Lambda^*_0 + \frac{1}{\lambda}\Big(J(t)-J(s)\Big) + I(t)-I(s)\]
   where we set: $J(t):=\int_0^tf_{\alpha,\lambda}(t-u)\varsigma(u)\sqrt{\Lambda^*_{u}}dW_u$ and $ I(t) =  \int_0^t f_{\alpha,\lambda}(t-u) \theta(u)  \, du$
   
   \noindent On the first hand,
   \begin{align*}
   	&\, \Big|(f_{\alpha,\lambda} * \phi)(t) - (f_{\alpha,\lambda} * \phi)(s) \Big|
   	= \Big|\int_0^s \left[ f_{\alpha,\lambda}(t - u) - f_{\alpha,\lambda}(s - u) \right] \phi(u) \, du 
   	+ \int_s^t f_{\alpha,\lambda}(t - u) \phi(u)\, du \Big| \\
   	&\hspace{3.5cm}\leq \|\phi\|_{t_0} \times \Bigg( \int_{0}^{s} |\left( f_{\alpha,\lambda}(t-u) - f_{\alpha,\lambda}(s-u) \right)|du +  \int_{s}^{t} |f_{\alpha,\lambda}(t-u)| du   \Bigg)\quad \text{	so that:}\\
   	%		\end{align*}
   %		\begin{align*}
   	&\Big\| |\big((\phi - f_{\alpha,\lambda} * \phi)(t)-(\phi - f_{\alpha,\lambda} * \phi)(s)\big) \Lambda^*_0| \Big\|_p \le 	\Big\| \Lambda^*_0| \Big\|_p  \Big(|(f_{\alpha,\lambda} * \phi)(s)-(f_{\alpha,\lambda} * \phi)(t) | + |\phi(t)-\phi(s) | \Big) \\
   	&\leq \Big\| |\Lambda^*_0| \Big\|_p \|\phi\|_{t_0} \times \Bigg( C\;|t-s|^{\vartheta} +  \left( \int_0^{+\infty}f_{\alpha,\lambda}^{2\beta}(u)du\right)^{\frac{1}{2\beta}}|t-s|^{1-\frac{1}{2\beta}}  \Bigg)+ C'_{t_0,p} \left( 1 + \|\phi\|_{t_0} \left\| |\Lambda^*_0| \right\|_p \right) |t - s|^{\delta} \\
   	&\leq C_{p,t_0, \beta, f_{\alpha,\lambda}} \left( 1 + \|\phi\|_{t_0} \left\| |\Lambda^*_0| \right\|_p \right) |t-s|^{\vartheta \wedge (1-\frac{1}{2\beta}) \wedge \delta}.
   \end{align*}
   Next we write, \(I(t)-I(s) =\int_0^s \left( f_{\alpha,\lambda}(t-u) - f_{\alpha,\lambda}(s-u) \right)\theta(u)  \, du +\int_s^{t}f_{\alpha,\lambda}(t-u)\theta(u)  \, du\), which is deterministic and by using generalized Minkowski inequalities, one gets from its non-randomness:
   \begin{align*}
   	\Big\| |I(t)-I(s)| \Big\|_p &\le \|\theta\|_{\infty} \times \Bigg( \int_{s}^{t} |f_{\alpha,\lambda}(t-u)| du + \int_{0}^{s} |\left( f_{\alpha,\lambda}(t-u) - f_{\alpha,\lambda}(s-u) \right)|du  \Bigg) \\
   	&\le \|\theta\|_{\infty} \times \Bigg( \Big(\int_0^{+\infty} |f_{\alpha,\lambda}|^{2\beta}(u)du \Big)^{\frac{1}{2\beta}}  (t-s)^{1-\frac{1}{2\beta}} + \int_0^s | f_{\alpha,\lambda}(t-u) - f_{\alpha,\lambda}(s-u) |du \Bigg)\\
   	&\le C_{p, \theta, f_{\alpha,\lambda}}|t-s|^{ (\vartheta\wedge(1-\frac{1}{2\beta}))} < +\infty ; \; \text{so that} \; \Big\| |I(t)-I(s)| \Big\|_p < +\infty \quad \text{uniformly in} \; t_0 \geq 0.
   \end{align*}
   Likewise, as $J(t)-J(s) =\int_0^s \left( f_{\alpha,\lambda}(t-u) - f_{\alpha,\lambda}(s-u) \right)\varsigma(u)\sqrt{\Lambda^*_{u}}dW_u +\int_s^{t}f_{\alpha,\lambda}(t-u)\varsigma(u)\sqrt{\Lambda^*_{u}}\mathrm{d}W_u $, 	combining the $L^p$-BDG, the generalized Minkowski inequality and the growth of $L^p$-norm wrt p \footnote{For every $p, r \in (0, \infty)$, with $r \leq p$, we have:
   	\(
   	\left( \int |X_t|^r \, d\mu(t) \right)^{1/r} 
   	\leq 
   	\left( \int |X_t|^p \, d\mu(t) \right)^{1/p}.
   	\)} yields:
   \begin{align*}
   	&\ \Big\| |J(t)-J(s)| \Big\|_p \le \left\| |\int_s^{t}f_{\alpha,\lambda}(t-u)\varsigma(u)\sqrt{\Lambda^*_{u}}\mathrm{d}W_u |\right\|_p + \left\| |\int_0^s \left( f_{\alpha,\lambda}(t-u) - f_{\alpha,\lambda}(s-u) \right)\varsigma(u)\sqrt{\Lambda^*_{u}}dW_u | \right\|_p\\
   	&\leq C_p \|\varsigma^2\|_{\infty} \left[\Big(\int_s^tf^2_{\alpha,\lambda}(t-u)\big\||\sqrt{\Lambda^*_{u}}|^2\big\|_\frac{p}{2} du\Big)^{\frac12} + \Big(\int_0^s \big(f_{\alpha,\lambda}(t-u)-f_{\alpha,\lambda}(s-u)\big)^2 \big\||\sqrt{\Lambda^*_{u}}|^2\big\|_\frac{p}{2} du\Big)^{\frac12}\right]\\
   	&\leq C_p \|\varsigma^2\|_{\infty} \sup_{u\ge 0}\Big\||\Lambda^*_{u}|\Big\|^{\frac12}_{p} \left[\Big(\int_0^{+\infty}f^{2\beta}_{\alpha,\lambda}(u) du\Big)^{\frac{1}{2\beta}} |t-s|^{\frac 12 (1-\frac{1}{\beta})} + \left(\int_0^{+\infty} \big(f_{\alpha,\lambda}(t-s+u)-f_{\alpha,\lambda}(u)\big)^2du\right)^{\frac12}\right] \\
   	&\leq C_{p,t_0, \varsigma, f_{\alpha,\lambda}} \cdot \left( 1 + \|\phi\|_{t_0}\Big\| |\Lambda^*_0 | \Big\|_p \right)|t-s|^{ \vartheta\wedge\frac{\beta -1}{2\beta}} \quad \text{where} \quad C_p  \equiv C^{BDG}_p .
   \end{align*}
   Finally, putting all these estimates together, for any $p\geqslant p_{eu}:=\frac{1}{\delta} \vee\frac{1}{\vartheta} \vee \frac{2\beta}{\beta-1}, t_0> 0$, the solution \(\Lambda^*\) satisfies (up to a $\mathbb P$-indistinguishability or a path-continuous version $\tilde{\Lambda}^*$ ):	
   \begin{equation}\label{eq:cont}
   	\E\, \left(|\Lambda^*_t-\Lambda^*_s| \right)^p \le C_{p,t_0, \varsigma, \beta, f_{\alpha,\lambda}}\cdot \left( 1 + \|\phi\|_{t_0}^{p}\mathbb{E}[|\Lambda^*_0|^{p}] \right)|t-s|^{p(\delta \wedge\vartheta\wedge \frac{\beta-1}{2\beta})}
   \end{equation}
   And thus, \( t \mapsto \Lambda^*_t \) admits a H\"older continuous modification (still denoted \( \Lambda^* \) in lieu of \( \tilde{\Lambda}^* \) up to a \( \mathbb{P} \)-indistinguishability), so that the process has the announced H\"older pathwise regularity, i.e. \( t \mapsto \Lambda^*_t \)
   has a \( \left( \delta \wedge \vartheta \wedge \frac{\beta - 1}{2\beta} - \eta \right) \)-H\"older pathwise continuous \( P \)-modification for sufficiently small \( \eta > 0 \). 
   
   \medskip
   \noindent The claim~(2) of the Theorem follows from [\cite{ZhangXi2010}, Theorem 2.10] or owing to Kolmogorov's continuity criterion (see \cite[Theorem 2.1, p. 26, 3rd edition]{RevuzYor} \footnote{Let $ X$ be a stochastic process with values in the Polish metric space $(S, \rho)$. If there exist positive constants $\alpha,\beta, c > 0 $ such that \(\mathbb E \left[ \rho( X_s, X_t)\right]^\alpha \le  c | s - t| ^{\beta+d}, \quad s,t \in \mathbb R^d\)
   	then $X$ admits a continuous modification and there exists a modification whose paths are H\"{o}lder continuous of order $\gamma$, for every $\gamma \in (0, \frac{\beta}{\alpha})$.} or \cite[Lemma 44.4, Section IV.44, p.100]{RogersWilliamsII}).
   
   \noindent That is, for any \( p > p_{eu} \), \( t_0 > 0 \), and some \( a \in \big(0,(\delta \wedge\vartheta\wedge \frac{\beta-1}{2\beta}) -\frac 1 p\big) \), we have:
   
   \begin{equation}\label{eq:Regul}
   	\mathbb{E}\left[\sup_{s\neq t\in [0,t_0]}\frac{|\Lambda^*_t-\Lambda^*_s|^p}{|t-s|^{ap}} \right]   <C_{a,p,t_0}  \left( 1 + \|\phi\|_{t_0}^{p}\mathbb{E}[|\Lambda^*_0|^{p}] \right)
   \end{equation}
   \noindent An alternative approach to proving the above estimate would involve the use of the Garsia-Rodemich-Rumsey inequality; see Lemma 1.1 in \cite{GRR1070} with \( \psi(u) = |u|^p \) and \( p(u) = |u|^{q + \frac{1}{p}} \) for \( q > \frac{1}{p} \). It states that for a continuous function \( f \) on \( \mathbb{R}_+ \), there exists a constant \( C_{p,q} > 0 \) such that for any \( x_2 > x_1 \geq 0 \),
   \[
   |f(x_2) - f(x_1)| \leq C_{p,q} \cdot |x_2 - x_1|^{q - \frac1p}
   \left( \int_{x_1}^{x_2} \int_{x_1}^{x_2} \frac{|f(s) - f(r)|}{|s - r|^{1 + pq }} \, dr \, ds \right)^{\frac1p}.
   \]
   Before stating the result, we recall the following definition. For \( a \in (0, 1] \), the \(a\)-H\"older coefficient of a real-valued function \( f \) or its \(a\)-H\"older seminorm on \([0, t_0]\) is defined by
   \[
   \|f\|_{C^{a}_{t_0}} := \sup_{0 \leq x_1 < x_2 \leq t_0} \frac{|f(x_1) - f(x_2)|}{|x_1 - x_2|^{a}}.
   \]
   Let \(a \in (0,1]\), we aim to apply the above GRR inequality to the sequence (see for example \cite[Theorem F.1 (GRR Lemma).]{JouPag22})  to the process \( \Lambda^* \) for large value of p, namely \(p>p_a:=\frac{1}{\delta \wedge \vartheta \wedge \frac{\beta - 1}{2\beta} - a}\) and \( q = \tfrac{1}{p} + a \). We now have:
   \[
   \| \Lambda^* \|_{C^a_{t_0}}^p
   = \sup_{0 \leq s< t \leq t_0} \frac{|\Lambda^*_t - \Lambda^*_s|^p}{|t - s|^{pa}}
   \leq C \cdot \int_0^{t_0} du \int_0^{t_0} \frac{|\Lambda^*_u - \Lambda^*_r|^p}{|u - r|^{pa + 2}} \, dr.
   \]
   Taking expectations on both sides of the preceding inequality, and then applying Fubini's theorem along with equation~\eqref{eq:cont}, we obtain:
   \begin{align*}
   	&\	\mathbb{E} \left[ \| \Lambda^* \|_{C^a_{t_0}}^p \right] = \mathbb{E} \left[  \sup_{0 \leq s< t \leq t_0} \frac{|\Lambda^*_t - \Lambda^*_s|^p}{|t - s|^{pa}} \right]
   	\\ &\hspace{1cm} \leq C \cdot C_{p,t_0, \varsigma, \beta, f_{\lambda}}  \cdot \left( 1 + \|\phi\|_{t_0}^{p}\mathbb{E}[|\Lambda^*_0|^{p}] \right) \cdot
   	\int_0^{t_0} du \int_0^{t_0} |u - r|^{p(\delta \wedge \vartheta \wedge \tfrac{\beta - 1}{2\beta}) - pa - 2} \, dr \nonumber \\
   	&\hspace{1cm} \leq C \cdot C_{p,t_0, \varsigma, \beta, f_{\lambda}} \cdot (1 + t_0)^{p(\delta \wedge \vartheta \wedge \tfrac{\beta - 1}{2\beta} - a)}\cdot \left( 1 + \|\phi\|_{t_0}^{p}\mathbb{E}[|\Lambda^*_0|^{p}] \right) \leq C_{p,t_0, \varsigma, \beta,\delta, \vartheta,a, f_{\lambda}} \cdot \left( 1 + \|\phi\|_{t_0}^{p}\mathbb{E}[|\Lambda^*_0|^{p}] \right).
   \end{align*}
   We then conclude that the bound holds uniformly for all \( 
   t_0 \geq 0 \).
   To extend the $L^p$-pathwise regularity  to every \( p \in (0, +\infty) \) such that \( \| |\Lambda^*_0| \|_p < +\infty \),
   one can take advantage of the form of the control to apply the splitting Lemma \cite[Lemma D.1]{JouPag22} with \( p < \bar{p} \), where \(\bar{p} = \frac{1}{\delta \wedge \vartheta \wedge \frac{\beta - 1}{2\beta} - a} \vee p_{\text{eu}}\),
   and considering the functional \( \Phi: \mathcal C ([0, t_0],\R)^2 \to \mathbb{R} \) defined by:
   
   \centerline{$\Phi(x,y) = \sup_{s \neq t \in [0, t_0]} \frac{|x(t) - x(s)|}{|t - s|^a}.$}
   \medskip
   \noindent The marginal bound~\eqref{eq:cont} can be extended likewise by considering 
   \(\Phi(x, y) = |x(t) - x(s)|.\)
   
   \smallskip
   \noindent  {\sc Step~2} {\em (Maximal inequalities).} 
   Now, we can establish the maximal inequality. Let \( p \) be given.
   \[ \text{For each }\; a\!\in \big(0,\delta \wedge\theta\wedge \frac{\beta-1}{2\beta}\big), \; \sup_{0\leq t \leq t_0} |\Lambda^*_t|^p \leq C\left( \sup_{0\leq t \leq t_0} |\Lambda^*_t-\Lambda^*_0|^p + |\Lambda^*_0|^p \right)  \leq C\left( \| \Lambda^* \|_{C^a_{t_0}}^p t_0^{pa} + |\Lambda^*_0|^p	\right) 
   \]
   Taking expectations on both sides of these inequality and then using the claim \ref{eq:Regul} yields:
   \begin{align*}
   	\mathbb{E} \left[ \sup_{0\leq t \leq t_0} |\Lambda^*_t|^p \right] &\leq C\left( t_0^{pa} \mathbb{E} \left[  \sup_{0 \leq s< t \leq t_0} \frac{|\Lambda^*_t - \Lambda^*_s|^p}{|t - s|^{pa}}  \right] + \mathbb{E} \left[  |\Lambda^*_0|^p \right]	\right) \\
   	&\leq  C_{p,t_0, \varsigma, \beta,\delta, \vartheta,a, f_{\lambda}} \left( 1 + \|\phi\|_{t_0}^{p}\mathbb{E}[|\Lambda^*_0|^{p}] \right)
   \end{align*}
   The marginal bound above, valid for all \(p > \bar{p}\), can be extended simirlarly to every \(p \in (0,+\infty) \) by applying the splitting argument from \cite[Lemma D.1]{JouPag22}, using the same \(\bar{p}\) as above and taking the functional \(\Phi(x,y) = \sup_{t \in [0,t_0]} |x(t)|.\) \hfill $\Box$

	\bibliographystyle{plainnat}
	\bibliography{Bibliography}

\begin{thebibliography}{43}
\providecommand{\natexlab}[1]{#1}
\providecommand{\url}[1]{\texttt{#1}}
\expandafter\ifx\csname urlstyle\endcsname\relax
  \providecommand{\doi}[1]{doi: #1}\else
  \providecommand{\doi}{doi: \begingroup \urlstyle{rm}\Url}\fi

\bibitem[A.~A.~Kilbas and Samko(1993)]{Pollard1948}
O.~I.~Marichev A.~A.~Kilbas and S.~G. Samko.
\newblock Fractional integrals and derivatives (theory and applications).
\newblock 1993.

\bibitem[Abi~Jaber et~al.(2019)Abi~Jaber, Larsson, and Pulido]{abi2019affine}
E.~Abi~Jaber, M.~Larsson, and S.~Pulido.
\newblock Affine volterra processes.
\newblock \emph{The Annals of Applied Probability}, 29\penalty0 (5):\penalty0
  3155--3200, 2019.

\bibitem[Bacry et~al.(2013)Bacry, Delattre, Hoffmann, and
  Muzy]{BacryDelattreHoffmannMuzy2013}
E.~Bacry, S.~Delattre, M.~Hoffmann, and J.~F. Muzy.
\newblock Some limit theorems for hawkes processes and application to financial
  statistics.
\newblock \emph{Stochastic Processes and their Applications}, 123\penalty0
  (7):\penalty0 2475--2499, 2013.

\bibitem[Bernstein(1929)]{Bernstein1929}
S.~Bernstein.
\newblock Sur les fonctions absolument monotones.
\newblock \emph{Acta Mathematica}, 52:\penalty0 1--66, 1929.

\bibitem[Billingsley(1999)]{Billingsley1999}
P.~Billingsley.
\newblock \emph{Convergence of Probability Measures}.
\newblock Wiley-Interscience, New York, 2nd edition, 1999.

\bibitem[Bingham et~al.(1989)Bingham, Goldie, and Teugels]{BiGoTe1989}
N.~H. Bingham, C.~M. Goldie, and J.~L. Teugels.
\newblock \emph{Regular {V}ariation}, volume~27 of \emph{Encyclopedia of
  Mathematics and its Applications}.
\newblock Cambridge University Press, Cambridge, 1989.

\bibitem[Callegaro et~al.(2020)Callegaro, Grasselli, and
  Pag\`es]{CallegaroGrasselliPages2020}
G.~Callegaro, M.~Grasselli, and G.~Pag\`es.
\newblock Fast hybrid schemes for fractional riccati equations (rough is not so
  tough).
\newblock \emph{Mathematics of Operations Research}, 46\penalty0 (1):\penalty0
  221--254, 2020.

\bibitem[Daley and Vere-Jones(2002)]{DaleyVereJones2002}
D.~J. Daley and D.~Vere-Jones.
\newblock \emph{An Introduction to the Theory of Point Processes: Elementary
  Theory and Methods}.
\newblock Probability and its Applications. Springer, 2nd edition, 2002.

\bibitem[Dawson and Fleischmann(1994)]{dawson1994super}
D.~A. Dawson and K.~Fleischmann.
\newblock A super-{B}rownian motion with a single point catalyst.
\newblock \emph{Stochastic Processes and their Applications}, 49\penalty0
  (1):\penalty0 3--40, 1994.

\bibitem[El~Euch and Rosenbaum()]{ElEuchR2018}
O.~El~Euch and M.~Rosenbaum.
\newblock Perfect hedging in rough {Heston} models.
\newblock \emph{Ann. Appl. Probab.}

\bibitem[El~Euch and Rosenbaum(2019)]{el2019characteristic}
O.~El~Euch and M.~Rosenbaum.
\newblock The characteristic function of rough {H}eston models.
\newblock \emph{Mathematical Finance}, 29\penalty0 (1):\penalty0 3--38, 2019.

\bibitem[El~Euch et~al.(2018)El~Euch, Fukasawa, and
  Rosenbaum]{ElEuchFukasawaRosenbaum2018}
O.~El~Euch, M.~Fukasawa, and M.~Rosenbaum.
\newblock The microstructural foundations of leverage effect and rough
  volatility.
\newblock \emph{Finance Stoch.}, 22\penalty0 (2):\penalty0 241--280, 2018.

\bibitem[El~Euch et~al.(2019)El~Euch, Gatheral, and
  Rosenbaum]{ElEuchGatheralRosenbaum2019}
O.~El~Euch, J.~Gatheral, and M.~Rosenbaum.
\newblock Roughening heston.
\newblock \emph{Risk}, pages 84--89, 2019.

\bibitem[Garsia et~al.(1970/71)Garsia, Rodemich, and Rumsey]{GRR1070}
A.~M. Garsia, E.~Rodemich, and H.~Rumsey, Jr.
\newblock A real variable lemma and the continuity of paths of some {G}aussian
  processes.
\newblock \emph{Indiana Univ. Math. J.}, 20:\penalty0 565--578, 1970/71.

\bibitem[Gatheral and Jacquier(2014)]{Gatheral2012arbitrage}
J.~Gatheral and A.~Jacquier.
\newblock Arbitrage-free svi volatility surfaces.
\newblock \emph{Quantitative Finance}, 14\penalty0 (1):\penalty0 59--71, 2014.

\bibitem[Gnabeyeu and Pag{\`e}s(2025)]{EGnabeyeu2025}
E.~Gnabeyeu and G.~Pag{\`e}s.
\newblock On a stationarity theory for stochastic volterra integral equations.
\newblock \emph{arXiv preprint arXiv.2511.03474}, 2025.

\bibitem[Gnabeyeu et~al.(2024)Gnabeyeu, Karkar, and
  Idboufous]{GnabeyeuKarkarIdboufous2024}
E.~Gnabeyeu, O.~Karkar, and I.~Idboufous.
\newblock Solving the dynamic volatility fitting problem: A deep reinforcement
  learning approach, 2024.

\bibitem[Gnabeyeu et~al.(2025)Gnabeyeu, Pag\`es, and
  Rosenbaum]{EGnabeyeuPR2025}
E.~Gnabeyeu, G.~Pag\`es, and M.~Rosenbaum.
\newblock \textnormal{On inhomogeneous affine Volterra processes: stationarity
  and applications to the Volterra Heston model.}
\newblock \emph{arXiv preprint arXiv.2512.09590}, 2025.

\bibitem[Gorenflo and Mainardi(1997)]{GorenfloMainardi1997}
R.~Gorenflo and F.~Mainardi.
\newblock Fractional calculus: Integral and differential equations of
  fractional order.
\newblock In \emph{Fractals and Fractional Calculus in Continuum Mechanics},
  pages 223--276. Springer Verlag, 1997.

\bibitem[Gripenberg et~al.(1990)Gripenberg, Norlund, and
  Saavalainen]{gripenberg1990}
B.~Gripenberg, S.~Norlund, and O.~Saavalainen.
\newblock \emph{Volterra Integral and Functional Equations}.
\newblock \emph{Encyclopedia of Mathematics and its Applications}. Cambridge
  University Press, Cambridge, UK, 1990.

\bibitem[Haan and Ferreira(2006)]{DeHaanFerreira2006}
L.~De Haan and A.~Ferreira.
\newblock \emph{\emph{Extreme Value Theory: An Introduction}}.
\newblock Springer Science \& Business Media, 2006.

\bibitem[Heston(1993)]{Heston1993}
S.~L. Heston.
\newblock A closed-form solution for options with stochastic volatility with
  applications to bond and currency options.
\newblock \emph{The Review of Financial Studies}, 6\penalty0 (2):\penalty0
  327--343, 1993.

\bibitem[Horst et~al.(2024)Horst, Xu, and Zhang]{HorstXuZhang2024}
U.~Horst, W.~Xu, and R.~Zhang.
\newblock Convergence of heavy-tailed hawkes processes and the microstructure
  of rough volatility.
\newblock \emph{arXiv preprint arXiv:2312.08784}, 2024.

\bibitem[Ikeda and Watanabe(1989)]{IkedaWatanabe1989}
N.~Ikeda and S.~Watanabe.
\newblock \emph{Stochastic Differential Equations and Diffusion Processes}.
\newblock North-Holland/Kodansha, Amsterdam/Tokyo, 1989.

\bibitem[Jacod and Shiryaev(2013)]{jacod2013limit}
J.~Jacod and A.~Shiryaev.
\newblock Limit theorems for stochastic processes.
\newblock \emph{Springer Science \& Business Media}, 288, 2013.

\bibitem[Jaisson and Rosenbaum(2015)]{JaissonRosenbaum2015}
T.~Jaisson and M.~Rosenbaum.
\newblock Limit theorems for nearly unstable hawkes processes.
\newblock \emph{Ann. Appl. Probab.}, 25\penalty0 (2):\penalty0 600--631, 2015.

\bibitem[Jaisson and Rosenbaum(2016)]{JaissonRosenbaum2016}
T.~Jaisson and M.~Rosenbaum.
\newblock Rough fractional diffusions as scaling limits of nearly unstable
  heavy tailed hawkes processes.
\newblock \emph{Ann. Appl. Probab.}, 26\penalty0 (5):\penalty0 2860--2882,
  2016.

\bibitem[{Jourdain} and {Pag{\`e}s}(2022)]{JouPag22}
B.~{Jourdain} and G.~{Pag{\`e}s}.
\newblock {Convex ordering for stochastic Volterra equations and their Euler
  schemes}.
\newblock \emph{\em Fin. \& Stoch.}, art. arXiv:2211.10186, November 2022.

\bibitem[Karatzas and Shreve(1991)]{karatzas1991}
I.~Karatzas and S.~E. Shreve.
\newblock \emph{\emph{Brownian motion and stochastic calculus}}, volume 113 of
  \emph{\emph{Graduate Texts in Mathematics}}.
\newblock Springer-Verlag, New York, second edition, 1991.

\bibitem[Kurtz and Protter(1991)]{Kurtz1991}
T.G. Kurtz and P.~Protter.
\newblock Weak limit theorems for stochastic integrals and stochastic
  differential equations.
\newblock \emph{The Annals of Probability}, 19\penalty0 (3):\penalty0
  1035--1070, jul 1991.

\bibitem[Mainardi(2014)]{Mainardi2014}
F.~Mainardi.
\newblock On some properties of the {M}ittag-{L}effler function
  {$E_\alpha(-t^\alpha)$}, completely monotone for {$t>0$} with {$0<\alpha<1$}.
\newblock \emph{Discrete Contin. Dyn. Syst. Ser. B}, 19\penalty0 (7):\penalty0
  2267--2278, 2014.

\bibitem[Mainardi and Gorenflo(2000)]{GorMain2000}
F.~Mainardi and R.~Gorenflo.
\newblock Fractional calculus: special functions and applications.
\newblock In \emph{Advanced special functions and applications ({M}elfi,
  1999)}, volume~1 of \emph{Proc. Melfi Sch. Adv. Top. Math. Phys.}, pages
  165--188. Aracne, Rome, 2000.

\bibitem[Mytnik and Salisbury(2015)]{mytnik2015uniqueness}
L.~Mytnik and T.S. Salisbury.
\newblock Uniqueness for {V}olterra-type stochastic integral equations.
\newblock \emph{arXiv preprint arXiv:1502.05513}, 2015.

\bibitem[Pag\`es(2024)]{Pages2024}
G.~Pag\`es.
\newblock Volterra equations with affine drift: looking for stationarity.
  {A}pplication to quadratic rough heston model.
\newblock 2024.

\bibitem[Protter(2005)]{Protter}
P.~E. Protter.
\newblock Stochastic integration and differential equations.
\newblock 2005.

\bibitem[Revuz and Yor(1999)]{RevuzYor}
D.~Revuz and M.~Yor.
\newblock \emph{Continuous martingales and {B}rownian motion}, volume 293 of
  \emph{Grundlehren der mathematischen Wissenschaften [Fundamental Principles
  of Mathematical Sciences]}.
\newblock Springer-Verlag, Berlin, third edition, 1999.

\bibitem[Richard et~al.(2021)Richard, Tan, and Yang]{RiTaYa2020}
A.~Richard, X.~Tan, and F.~Yang.
\newblock Discrete-time simulation of stochastic {V}olterra equations.
\newblock \emph{Stochastic Process. Appl.}, 141:\penalty0 109--138, 2021.

\bibitem[Rogers and Williams()]{RogersWilliamsII}
L.~C.~G. Rogers and D.~Williams.
\newblock \emph{Diffusions, {Markov} processes, and martingales. {Vol}. 2:
  {It{\^o}} calculus.}
\newblock 2nd ed. edition.

\bibitem[T.~Kailath and Zakai(1978)]{Kailath_Segall}
A.~Segall T.~Kailath and M.~Zakai.
\newblock Fubini-type theorems for stochastic integrals.
\newblock \emph{Sankhy{\=a}: The Indian Journal of Statistics}, pages
  138--143., 1978.

\bibitem[Veraar(2012)]{Veraar2012}
M.~Veraar.
\newblock The stochastic {F}ubini theorem revisited.
\newblock \emph{Stochastics}, 84\penalty0 (4):\penalty0 543--551, 2012.
\newblock ISSN 1744-2508, 1744-2516.

\bibitem[Walsh(1986)]{Walsh1986}
J.~B. Walsh.
\newblock An introduction to stochastic partial differential equations.
\newblock In P.~L. Hennequin, editor, \emph{{\'E}cole d'{\'E}t{\'e} de
  Probabilit{\'e}s de Saint Flour XIV--1984}, volume 1180 of \emph{Lecture
  Notes in Mathematics}, pages 265--439. Springer, Berlin, 1986.

\bibitem[Whitt(2007)]{Whitt2007}
W.~Whitt.
\newblock Proofs of the martingale {FCLT}.
\newblock \emph{Probability Surveys}, 4:\penalty0 268--302, 2007.

\bibitem[Zhang(2010)]{ZhangXi2010}
X.~Zhang.
\newblock Stochastic {V}olterra equations in {B}anach spaces and stochastic
  partial differential equation.
\newblock \emph{J. Funct. Anal.}, 258\penalty0 (4):\penalty0 1361--1425, 2010.

\end{thebibliography}
	
	\normalsize  
	\appendix
%	\section{Preliminaries: Analytical Tools and Properties of Hawkes Processes with Initial States}
	\section{Analytical tools and properties of Hawkes processes with initial states}\label{sect-PrelimStats}
	In this section, we introduce key tools including (functional) Fourier-Laplace transforms and results on resolvents of a borel function that will be important to our analysis. We next state key properties of Hawkes processes with general kernels and initial state condition.
	\subsection{Preliminaries: solvent core of a Borel function and Wiener-Hopf equations.}\label{subsect-tools} 
	Let us first introduce the Laplace transform of a Borel function $f:\R_+\to \R_+$ by
	\[\forall\, t\ge  0, \quad L_f(t)= \int_0^{+\infty} e^{-tu}f(u)du \!\in [0,\infty].\]
	This Laplace transform is non-increasing and if $L_f(t_0)<+\infty$ for some $t_0\ge 0$, then $L_f(t)\to 0$ as $t\to +\infty$.
	One can define the  Laplace transform of a Borel function $f:\R_+\to \R$ on $(0, +\infty)$ as soon as $L_{|f|}(t) <+\infty$ for every $t>0$  by the above formula. The Laplace transform can be extended to $\R_+$ as an $\R$-valued function  if $f \!\in {\cal L}^1_{\R_+}({\rm Leb}_1)$. 
	
	\medskip
	\noindent We consider the following definition of the resolvent of a Borel function, as introduced in \cite{Pages2024}. For every \(\lambda \in \mathbb{R}\), the {\em resolvent} \(R_{\lambda}\) associated with the function \(\varphi\), also known as the \(\lambda\)-resolvent of \(\varphi\), is defined as the unique solution (if it exists) to the deterministic Volterra equation:
	\begin{equation}\label{eq:Resolvent}
		\forall\,  t\ge 0,\quad R_{\lambda}(t) + \lambda \int_0^t \varphi(t-s)R_{\lambda}(s)ds = 1
	\end{equation}
	which can be written in terms of convolution: \(\; R_{\lambda}+\lambda \varphi*R_{\lambda} = 1.\;\)
	The solution  always satisfies $R_{\lambda}(0)=1$ and formally admits the following \textit{Neumann series expansion}:  
	\begin{equation}\label{eq:Resolvent3}
		R_{\lambda} = \sum_{k\ge 0} (-1)^k \lambda^k (\mbox{\bf 1}*\varphi^{k*}). 
	\end{equation}
	where, \(\varphi^{k*}\) denotes the \(k\)-th convolution of \(\varphi\) or the k-fold $*$
	product of \(\varphi\) with itself, with the convention in this formula, $\varphi^{0*}= \delta_0$ (Dirac mass at $0$). Recall that $ \varphi^{1*} = \varphi $ and $ \varphi^{k*}(t) = \int_0^t \varphi(t - s) \cdot \varphi^{(k-1)*}(s) \, ds.$
	
	\medskip
	\noindent {\bf Remark:} {\em Connexion with the canonical resolvent and the resolvent of the second kind.}
	We recall from  \cite{gripenberg1990} (see also \cite{abi2019affine}) that the one-dimensional functional resolvent of the second kind, of a function \( \varphi \in L^1_{\text{loc}}(\mathbb{R}^+) \), is the unique function \( \Psi_\varphi  \in L^1_{\text{loc}}(\mathbb{R}^+) \), solution to the linear Volterra
	equation \(\Psi_\varphi - \varphi = \varphi  \ast \Psi_\varphi\) while for some $\lambda \in \mathbb{R}$, the canonical resolvent of $\varphi$ with parameter $\lambda$ is the unique solution $E_\lambda \in L^1_{\text{loc}}(\mathbb{R}_+, \mathbb{R})$ of
	\(E_\lambda + \lambda \varphi \ast E_\lambda = \varphi\). This means that  \(E_0 = \varphi \quad \text{and} \quad E_{-1} = \Psi_\varphi.\)\\

	\noindent If $\varphi$ is regular enough (say continuous) the resolvent $R_{\lambda}$ is differentiable and one checks that $f_{\lambda}=-R'_{\lambda}$ satisfies, for every  $t>0$, \(-f_{\lambda}(t) +\lambda \big( R_{\lambda}(0)\varphi(t) - \varphi *f_{\lambda}(t)\big)=0\)
	that is $f_{\lambda}$ is solution to the equation
	\begin{equation}\label{eq:flambda-eq}
		f_{\lambda} +\lambda \varphi *f_{\lambda}=\lambda   \varphi.
	\end{equation}
	\noindent $\bullet$  We thus have: \(f_{\lambda} = \lambda E_{\lambda}\) so that $\Psi_\varphi = -f_{-1} =R^\prime_{-1}$.
	In particular, if $\lambda > 0$ and $R_{\lambda}$ turns out to be non-increasing, then $f_{\lambda}$ is non-negative and satisfies $0\le f_{\lambda} \le \lambda \varphi$. In that case one also has that $\int_0^{+\infty} f_{\lambda}(t)dt = 1 -R_{\lambda}(+\infty)$, so that $f_{\lambda} $ {\em is a probability density} if and only if $\displaystyle \lim_{t\to +\infty} R_{\lambda}(t) =0$.\\
	\noindent $\bullet$  Moreover, taking Laplace transforms of both sides of equation ~\eqref{eq:Resolvent} and then using that $L_{1}(t) =\frac1t$, yields:
	\begin{equation}\label{eq:Lapl_Resolvent}
		L_{R_\lambda}(t) = \frac{1}{t(1+\lambda L_\varphi(t))}.
	\end{equation}
	% \bigskip
	\begin{example}[\textit{$\lambda$-resolvent of a kernel and its Laplace Transform:} The case of the {\em Fractional integration kernel}]\label{Ex:frackernel}
		\begin{equation}\label{eq:frackernel}
			K(t) = K_{\alpha}(t) = \frac{u^{\alpha-1}}{\Gamma(\alpha)} \mbox{\bf 1}_{\R_+}(t),  \quad \alpha>0.
		\end{equation}
		This  family of  kernels corresponds to the fractional integrations of order $\alpha >0$. 
		It follows from the  easy identity  $K_{\alpha}*K_{\alpha'}= K_{\alpha+\alpha'}$,  that \(	R_{\alpha,\lambda}(t) = \sum_{k\ge 0} (-1)^k \frac{\lambda ^k t^{\alpha k}}{\Gamma(\alpha k+1)}= E_{\alpha}(-\lambda t^{\alpha} )= e_{\alpha}(\lambda^{1/\alpha}t) \; t\ge 0,\)
		where $E_{\alpha}$ denotes the standard   Mittag-Leffler function and  $e_{\alpha}$, the alternate Mittag-Leffler function.
		\begin{equation}\label{eq:MittagLeffler}
			E_{\alpha}(t) = \sum_{k\ge 0} \frac{t^k}{\Gamma(\alpha k+1)},\ t\!\in \R \quad \textit{and} \quad e_{\alpha}(t) := E_{\alpha}(-t^{\alpha}) = \sum_{k\ge 0} (-1)^k \frac {t^{\alpha k}}{\Gamma(\alpha k +1)}, \quad t\ge 0.
		\end{equation}
		Integral representations of the Mittag-Leffler function \( E_{\alpha} \) were first established in~\cite{Pollard1948}, followed by further results in~\cite{GorMain2000} and ~\cite{GorenfloMainardi1997}, where they were connected to the Laplace transform. For instance, in equation (F.12) on page 29 of~\cite{GorMain2000}, the Laplace transform of \( E_{\alpha}(-at^{\alpha}) \), with \( a \in \mathbb{C} \), is given by:
		\[
		L_{E_{\alpha}(-at^{\alpha})}(z) = \frac{z^{\alpha - 1}}{z^{\alpha} + a}, \quad z \in \mathbb{C}, \; \Re(z) > |a|^{1/\alpha}, \quad \alpha > 0.
		\]
		From this, we deduce the Laplace transform of \( e_{\alpha} \), which is given by \(L_{e_{\alpha}}(z) = \frac{z^{\alpha - 1}}{z^{\alpha} + 1}, \; z \in \mathbb{C}, \; \Re(z) > 1\) and \(\alpha > 0.\)
		Here, we define \( z^{\alpha} := |z|^{\alpha} e^{i\alpha \arg(z)} \), where \( -\pi < \arg(z) < \pi \), that is in the complex
		z-plane cut along the negative real axis.
		
		\medskip
		\noindent One shows (see~\cite{GorenfloMainardi1997} and Section 5 of \cite{Pages2024} further on) that $E_{\alpha}$ is increasing and differentiable on the real line with $\displaystyle \lim_{t\to +\infty}E_{\alpha} (t) =+\infty$ and $E_{\alpha}(0)=1$. In particular, $E_\alpha$ is an homeomorphism from $(-\infty, 0]$ to $(0,1]$. Hence, if $\lambda >0$, the function $f_{\alpha, \lambda}$ defined on $(0,+\infty)$ by
		\begin{equation}
			\label{eq:flambda} f_{\alpha, \lambda}(t)= - R'_{\alpha, \lambda}(t) = \alpha\lambda t^{\alpha-1} E'_{\alpha}(-\lambda t^{\alpha})  = \lambda t^{\alpha-1}\sum_{k\ge 0}(-1)^k\lambda^k \frac{t^{\alpha k}}{\Gamma(\alpha (k+1))}
		\end{equation}
		is a probability density -- called Mittag-Leffler density -- since $f_{\alpha, \lambda}>0$ and $\displaystyle \int_0^{+\infty} f_{\alpha, \lambda}(t)dt =  R_{\alpha,\lambda}(0) - R_{\alpha,\lambda}(+\infty) = 1$. Its Fourier-Laplace Transform is given by:
		\begin{align}
			L_{f_{\alpha, \lambda}}(z)=-L_{R'_{\alpha, \lambda}}(z) &= -L_{e'_{\alpha}(\lambda^{1/\alpha}\cdot)}(z) =-L_{e'_{\alpha}}(\frac{z}{\lambda^{1/\alpha}}) = -\left[\frac{z}{\lambda^{1/\alpha}}L_{e_{\alpha}}(\frac{z}{\lambda^{1/\alpha}}) - e_{\alpha}(0)\right] \notag \\
			&=\left[\frac{z}{\lambda^{1/\alpha}}\cdot \frac{(\frac{z}{\lambda^{1/\alpha}})^{\alpha - 1}}{(\frac{z}{\lambda^{1/\alpha}})^{\alpha} + 1} - 1 \right] = \frac{\lambda}{z^\alpha +\lambda}\label{eq:lapl_resolvent}
		\end{align}
		
	\end{example}
	\noindent Now, we outline the main results of these preliminaries, the first from \cite{EGnabeyeu2025}.
	
	\begin{proposition}[Wiener-Hopf and resolvent equations] \label{prop:W-H} Let $g, h: \R_+ \to \R$ be two  locally bounded Borel function, let $\varphi \! \in L^1_{loc}(Leb_{\R_+})$ and let $\lambda \!\in \R$.  Assume that the $\lambda$-resolvent $R_{\lambda}$ of $\varphi$ is differentiable on $(0, +\infty)$ with a derivative $R'_{\lambda}\!\in     L^1_{loc}(Leb_{\R_+})$, that both $R_{\lambda}$ and $R'_{\lambda}$ admit a finite Laplace transform on $\R_+$ and $\displaystyle \lim_{u\to +\infty} e^{-tu}R_{\lambda}(u) = 0$ for every $t>0$. Then,
		\begin{enumerate}
			\item[$(a)$]
			The Wiener-Hopf equation
			$\forall\, t\ge 0, \quad x(t) = g(t) -\lambda \int_0^t \varphi(t-s) x(s) ds$
			(also reading $x= g-\lambda \varphi*x$)  has   a solution given by:
			\begin{equation}\label{eq:Wiener-Hopf-solu}
				\forall\, t\ge 0, \quad x(t) = g(t) +\int_0^t R'_{\lambda}(t-s)g(s)ds\; \quad \text{that is,} \quad x= g- f_{\lambda}*g.
			\end{equation}	
			where $f_{\lambda}:= -R'_{\lambda}$. Or equivalently, \(x= g- \lambda E_{\lambda}*g,\) where $E_{\lambda}$ is the canonical resolvent of $\varphi$ with parameter $\lambda$. This solution is uniquely defined on $\R_+$ up to  $dt$-$a.e.$ equality.
			
			\item[$(b)$] The Wiener-Hopf or integral equation defined by
			\begin{equation}\label{eq:Wiener-Hopf22}
				\forall\, t\ge 0, \quad x(t) = h(t) - \int_0^t R'_{\lambda}(t-s) x(s) ds \quad \text{where} \quad f_{\lambda}= - R'_{\lambda}
			\end{equation}
			(also reading $x= h- R'_{\lambda}*x$)  has   a solution given by:
			\begin{equation}\label{eq:Wiener-Hopf-solu2}
				\forall\, t\ge 0, \quad x(t) = h(t) + \lambda \int_0^t \varphi(t-s)h(s)ds
			\end{equation}
			or, equivalently,  \(	x= h+ \lambda \varphi*h, \)
			This solution is uniquely defined on $\R_+$ up to  $dt$-$a.e.$ equality.
			
		\end{enumerate}
	\end{proposition}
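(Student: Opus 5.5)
The plan is to treat the two parts separately and reduce both to Laplace transforms, using the standing hypotheses on $R_\lambda$. Those hypotheses are: $R_\lambda$ is differentiable, $R'_\lambda\in L^1_{loc}$, both have finite Laplace transforms, and $e^{-tu}R_\lambda(u)\to0$. First I record the transform identities. Integrating by parts with $R_\lambda(0)=1$ and the decay condition gives
\[
L_{R'_\lambda}(t)=tL_{R_\lambda}(t)-1 .
\]
Combining this with \eqref{eq:Lapl_Resolvent} yields
\[
L_{R'_\lambda}(t)=\frac{1}{1+\lambda L_\varphi(t)}-1=-\frac{\lambda L_\varphi(t)}{1+\lambda L_\varphi(t)},
\]
which is equivalent to the convolution identity $f_\lambda+\lambda\varphi*f_\lambda=\lambda\varphi$ of \eqref{eq:flambda-eq}. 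To avoid any growth assumption on $g,h$, I would prove this identity directly at the level of functions rather than only on transforms. Differentiating the resolvent equation \eqref{eq:Resolvent} in $t$ is legitimate because $R_\lambda$ is absolutely continuous, so $\varphi*R_\lambda$ is absolutely continuous with derivative $\varphi R_\lambda(0)+\varphi*R'_\lambda$. Laplace uniqueness then serves only as a cross-check.

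For (a), I verify directly that $x=g-f_\lambda*g$ solves $x=g-\lambda\varphi*x$. Using associativity of convolution on $L^1_{loc}$ (the functions are locally bounded or integrable, so Fubini applies), compute
\[
g-\lambda\varphi*x=g-\lambda\varphi*g+\lambda(\varphi*f_\lambda)*g .
\]
By \eqref{eq:flambda-eq}, $\lambda\varphi*f_\lambda=\lambda\varphi-f_\lambda$. Substituting, the right-hand side becomes $g-\lambda\varphi*g+\lambda\varphi*g-f_\lambda*g=x$. The equivalent form $x=g-\lambda E_\lambda*g$ is immediate from $f_\lambda=\lambda E_\lambda$.

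Uniqueness is the main delicate point. If $y$ is the difference of two locally integrable solutions, then $y=-\lambda\varphi*y$. Convolving with $f_\lambda$ and using \eqref{eq:flambda-eq} once more gives
\[
f_\lambda*y=-\lambda (f_\lambda*\varphi)*y=-(\lambda\varphi-f_\lambda)*y=y+f_\lambda*y,
\]
so $y=0$ a.e. This is purely algebraic and needs no Laplace argument. That is precisely why I prefer it to the transform route, which would require exponential bounds on $y$.

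For (b), I proceed symmetrically: set $x=h+\lambda\varphi*h$ and check that $x+R'_\lambda*x=h$, that is, $x-f_\lambda*x=h$. Expanding,
\[
x-f_\lambda*x=h+\lambda\varphi*h-f_\lambda*h-\lambda f_\lambda*\varphi*h .
\]
The last three terms cancel because $\lambda\varphi-f_\lambda-\lambda f_\lambda*\varphi=0$ by \eqref{eq:flambda-eq}. Uniqueness works the same way: a null solution satisfies $y=f_\lambda*y$. Convolving with $\lambda\varphi$ and using $\lambda\varphi*f_\lambda=\lambda\varphi-f_\lambda$ gives $\lambda\varphi*y=(\lambda\varphi-f_\lambda)*y=\lambda\varphi*y-y$, hence $y=0$ a.e.

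The only step I expect to need care is justifying \eqref{eq:flambda-eq} under the minimal regularity assumed ($\varphi\in L^1_{loc}$, not continuous). I would first derive it via Laplace transforms, which is valid because every function involved has a finite transform, and then invoke injectivity of the Laplace transform on $L^1_{loc}$ functions with finite transforms.
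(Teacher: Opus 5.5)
Your proof is correct. For part (a) it takes a genuinely different route from the paper; for part (b) it is close to the paper's argument.

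\textbf{Part (a).} The paper takes Laplace transforms of $x=g-\lambda\varphi*x$. Using $tL_{R_\lambda}(t)=1+L_{R'_\lambda}(t)$ and \eqref{eq:Lapl_Resolvent}, it writes $L_x=L_g/(1+\lambda L_\varphi)=L_g(1+L_{R'_\lambda})$ and concludes by injectivity. You instead verify existence by direct substitution and obtain uniqueness from the resolvent trick $f_\lambda*y=y+f_\lambda*y$.

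\textbf{Part (b).} The paper starts from a solution $x$, convolves $x+R'_\lambda*x=h$ with $\varphi$, and uses $R'_\lambda*\varphi=-\varphi-R'_\lambda/\lambda$ to get $\lambda\varphi*h=-R'_\lambda*x=x-h$. Your uniqueness step for (b) is essentially this manipulation applied to a null solution.

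\textbf{What your version gains.} The paper's transform argument tacitly requires $g$ and $x$ to have finite Laplace transforms, which local boundedness does not provide, so it gives uniqueness only within that class. Its (b) argument divides by $\lambda$ and never checks that $h+\lambda\varphi*h$ actually solves the equation. Your argument uses only \eqref{eq:flambda-eq} and associativity of convolution on $L^1_{loc}$ (justified by Fubini--Tonelli). It yields existence by verification, uniqueness among all locally integrable solutions, and the case $\lambda=0$ trivially. The Laplace-transform hypotheses on $R_\lambda$ thus become almost superfluous. You also justify \eqref{eq:flambda-eq} for merely locally integrable $\varphi$, whereas the paper derives it only for continuous $\varphi$. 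The paper's route is shorter but less general.

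\textbf{One inconsistency to fix.} In your last paragraph you propose obtaining \eqref{eq:flambda-eq} via Laplace transforms, on the grounds that every function involved has a finite transform. Finiteness of $L_\varphi$ is not among the hypotheses; it is only implicit in \eqref{eq:Lapl_Resolvent}. Your function-level derivation is the better one and needs no transforms. Since $R_\lambda$ is differentiable on $(0,+\infty)$ with $R'_\lambda\in L^1_{loc}$ and $R_\lambda(0^+)=R_\lambda(0)=1$, you have $R_\lambda=1+\mathbf{1}*R'_\lambda$. Inserting this into $R_\lambda+\lambda\varphi*R_\lambda=1$ gives $\mathbf{1}*(R'_\lambda+\lambda\varphi+\lambda\varphi*R'_\lambda)=0$, hence \eqref{eq:flambda-eq} almost everywhere. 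Drop the transform route and present this derivation as the one you use.
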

	\noindent In Appendix~\ref{app:lemmata}, we provide a proof of this classical result for the reader's convenience.
	
		\begin{corollary}[Fredhom equations]\label{corl:fredhom}
		Let \( g \) be a measurable locally bounded function from \( \mathbb{R} \) to \( \mathbb{R} \) and \( \varphi : \mathbb{R}^+ \to \mathbb{R} \) be a real-valued function locally integrable. Then, there exists a unique locally bounded function \( f \) from \( \mathbb{R} \) to \( \mathbb{R} \) solution of \(	f(t) = g(t) + \int_0^t \varphi(t - s) \cdot f(s) \, ds, \quad t \geq 0,\)
		given by
		\[
		f(t) = g(t) + \int_0^t \psi_\varphi(t - s) \cdot g(s) \, ds, \quad t \geq 0, \quad \text{where} \quad \psi_\varphi = \sum_{k \geq 1} \varphi^{*k}.
		\]
	\end{corollary}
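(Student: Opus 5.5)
The plan is to reduce the Fredholm-type equation to the resolvent of the second kind and verify existence and uniqueness by a Neumann series argument on compact intervals.

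\textbf{Step 1: the kernel $\psi_\varphi$.} Define $\psi_\varphi := \sum_{k\ge1}\varphi^{*k}$. The first task is to show this series converges in $L^1([0,t_0])$ for every $t_0>0$. For this I introduce the exponentially weighted norm $\|h\|_{\rho,t_0} := \int_0^{t_0} e^{-\rho u}|h(u)|\,du$. Convolution is submultiplicative for this norm, that is $\|h*k\|_{\rho,t_0}\le \|h\|_{\rho,t_0}\|k\|_{\rho,t_0}$, since $e^{-\rho t}=e^{-\rho(t-s)}e^{-\rho s}$. Because $\varphi\in L^1_{loc}$, dominated convergence gives $\|\varphi\|_{\rho,t_0}\to0$ as $\rho\to\infty$. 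I then choose $\rho$ with $\|\varphi\|_{\rho,t_0}\le \tfrac12$. The series is then geometrically convergent in $\|\cdot\|_{\rho,t_0}$, which is equivalent on $[0,t_0]$ to the plain $L^1$ norm. So $\psi_\varphi\in L^1_{loc}$, and it satisfies $\psi_\varphi=\varphi+\varphi*\psi_\varphi=\varphi+\psi_\varphi*\varphi$. This is exactly the resolvent of the second kind from the Remark, i.e. $\psi_\varphi=E_{-1}=R'_{-1}$. (Equivalently, this is Proposition~\ref{prop:W-H}(a) with $\lambda=-1$ and $f_{-1}=-\psi_\varphi$, but the direct route avoids the Laplace-transform hypotheses there.)

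\textbf{Step 2: existence.} I set $f:=g+\psi_\varphi*g$. Since $g$ is locally bounded and $\psi_\varphi\in L^1_{loc}$, the bound $|\psi_\varphi*g|(t)\le \|g\|_{t_0}\int_0^{t_0}|\psi_\varphi|$ shows $f$ is locally bounded and measurable. Associativity of convolution (Fubini) then gives
\[
\varphi*f=\varphi*g+(\varphi*\psi_\varphi)*g=(\psi_\varphi-\varphi)*g+\varphi*g=\psi_\varphi*g=f-g,
\]
which is the equation.

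\textbf{Step 3: uniqueness.} Let $f_1,f_2$ be two locally bounded solutions and set $d=f_1-f_2$. Then $d=\varphi*d$, and $d$ is bounded on $[0,t_0]$. I use the weighted sup norm $\sup_{t\le t_0}e^{-\rho t}|d(t)|$. Estimating $e^{-\rho t}|\varphi*d|(t)\le \big(\int_0^{t_0}e^{-\rho u}|\varphi(u)|du\big)\sup_{s\le t_0} e^{-\rho s}|d(s)|$ yields $\sup e^{-\rho t}|d|\le\tfrac12\sup e^{-\rho t}|d|$, hence $d\equiv0$ on $[0,t_0]$. Since $t_0$ is arbitrary, $d\equiv 0$ everywhere.

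The only step needing care is Step 1: choosing $\rho$ so that $\|\varphi\|_{\rho,t_0}<1$ when $\varphi$ is only locally integrable and possibly signed. Dominated convergence handles it. Everything else is routine Fubini manipulation, and for $t<0$ the statement is vacuous, with $f=g$ there.
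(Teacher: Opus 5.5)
Your proof is correct, and it takes a different route from the paper. The paper disposes of the corollary in one line by invoking Proposition~\ref{prop:W-H}(a) with $\lambda=-1$, or Lemma 3 of \cite{BacryDelattreHoffmannMuzy2013}. There one takes Laplace transforms and writes $L_x=L_g/(1-L_\varphi)=L_g\,(1+L_{R'_{-1}})$ using $tL_{R_{-1}}(t)=1+L_{R'_{-1}}(t)$. One then concludes by injectivity of the Laplace transform. The identification $R'_{-1}=\sum_{k\ge1}\varphi^{*k}$ comes from the Neumann series, which the paper states only formally.

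You instead stay in the time domain. The weighted norm $\int_0^{t_0}e^{-\rho u}|h(u)|\,du$ makes convolution by $\varphi$ a contraction on $[0,t_0]$. This single device proves that $\sum_{k\ge1}\varphi^{*k}$ converges in $L^1_{\rm loc}$, gives the resolvent identity $\varphi*\psi_\varphi=\psi_\varphi-\varphi$, and, via the weighted sup norm, gives uniqueness.

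The paper's route is shorter and matches the Laplace machinery used elsewhere, for instance to identify the Mittag-Leffler limit. However, it rests on the hypotheses of Proposition~\ref{prop:W-H}: finite Laplace transforms of $R_{-1}$, $R'_{-1}$, and implicitly of $g$ and of the solution. The corollary's assumptions do not supply these.
\begin{itemize}
\item A merely locally integrable $\varphi$ or locally bounded $g$ may grow super-exponentially.
\item For nonnegative $\varphi$ with $\int\varphi>1$, $R_{-1}$ grows exponentially.
\end{itemize}
So that route tacitly needs a localisation to $[0,t_0]$ and transforms taken on a half-line $(\rho_0,+\infty)$. Its uniqueness also holds only within the class of solutions that admit a Laplace transform.

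Your argument needs none of this. It covers signed $L^1_{\rm loc}$ kernels and arbitrary locally bounded $g$ exactly as stated. It proves convergence of the Neumann series rather than assuming it, and it gives uniqueness among all locally bounded solutions. Your weighted norm is a truncated Laplace transform of $|\varphi|$ evaluated at large $\rho$, so it is the time-domain, localised version of the same idea.

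Your remark about $t<0$ is fine as a convention. The equation imposes nothing there, so uniqueness is only meaningful on $\mathbb{R}_+$.
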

	\noindent {\bf Proof:} This is a Corollary of the proposition~\ref{prop:W-H}. It follows by appling the Wiener-Hopf equation of the first claim (a) in proposition~\ref{prop:W-H} with $\lambda=-1$. Alternatively see e.g. Lemma 3.  of \cite{BacryDelattreHoffmannMuzy2013}. \hfill $\Box$
	
	\medskip
	\noindent
	We now state the following proposition regarding the equivalence between a general representation of Volterra equations with affine drift, which is uniquely defined, and a form involving the $\lambda$-resolvent of the $\alpha$-fractional integration kernel. This result is an extension of [Proposition 4.9 \cite{ElEuchFukasawaRosenbaum2018}] which turns out to be the particular case of constant initial function \(\phi \equiv 1\) and \(\alpha \in (\frac12,1)\).
	
	\begin{proposition}[Equivalence Wiener-Hopf Transform for fractional Volterra integral equation]\label{prop:wiener_hopf}
		Let $\lambda > 0$ and let $\theta : \mathbb{R}_+ \to \mathbb{R}$ be a bounded Borel function (hence having   a well-defined finite Laplace transform on $(0,+\infty)$), $\sigma: \R_+\times \R \to  \R$  H\"older continuous in $x$, locally uniformly in $t\!\in [0,T]$, for every $T>0$, $ \phi \!\in {\cal L}^1_{\R_+}({\rm Leb}_1)$ a continuous function and \( (W_t)_{t \geq 0} \) a standard Brownian motion independent from the \( \mathbb{R} \)-valued random variable \( X_0 \), both defined on a probability space \( (\Omega, \mathcal{A}, \mathbb{P}) \). Assume
		$L^2_{\text{loc}, \mathbb{R}^+} (\text{Leb}_1) \ni K_{\alpha}:\R_+\to \R_+$ is the convolutive \(\alpha-\) fractional integration kernel for \(\alpha \in (\frac12,\frac32)\)  and its $\lambda$-resolvent $R_{\alpha,\lambda}$ is well-defined and differentiable on $(0, +\infty)$ with a derivative $R'_{\lambda}:=-f_{\alpha,\lambda}\!\in     L^1_{loc}(Leb_{\R_+})$, that both $R_{\alpha,\lambda}$ and $R'_{\alpha,\lambda}$ admit a finite Laplace transform on $\R_+$  Then, the process $(X_t)_{t \geq 0}$ is the solution of the following  stochastic volterra integral equation:
		\begin{equation}\label{eq:Volterrameanrevert2}
			X_t= X_0\big(\phi(t)- \int_0^t f_{\alpha, \lambda}(t-s)\phi(s)ds\big) +\frac{1}{\lambda}\int_0^t f_{\alpha,\lambda}(t-s)\theta(s)ds + \frac{1}{\lambda}\int_0^t f_{\alpha,\lambda}(t-s)\sigma(s,X_s)dW_s.
		\end{equation}
		if and only if it is the solution of
		\begin{equation}\label{eq:Volterrameanrevert}
			X_t = X_0\phi(t) +\int_0^t K_\alpha(t-s)(\theta(s)-\lambda X_s)ds + \int_0^t K_\alpha(t-s)\sigma(s,X_s)dW_s, \quad X_0\perp\!\!\!\perp W,
		\end{equation}
	\end{proposition}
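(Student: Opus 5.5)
The plan is to pass from \eqref{eq:Volterrameanrevert} to \eqref{eq:Volterrameanrevert2} and back using only Proposition~\ref{prop:W-H}, applied \emph{pathwise} with $\varphi=K_\alpha$. For a fixed solution $X$, write \eqref{eq:Volterrameanrevert} as the Wiener--Hopf equation
\[
X=g-\lambda K_\alpha*X,\qquad g(t):=X_0\phi(t)+(K_\alpha*\theta)(t)+\int_0^tK_\alpha(t-s)\sigma(s,X_s)\,dW_s .
\]
Here $g$ is a process with locally bounded paths, at least after taking a continuous version, since the stochastic convolution with $K_\alpha$ is Hölder when $\alpha>\frac12$ and $\sigma(s,X_s)$ has locally bounded moments.

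Proposition~\ref{prop:W-H}(a) then gives $X=g-f_{\alpha,\lambda}*g$ $dt$-a.e., and this identity upgrades to every $t$ by continuity. Expanding $g-f_{\alpha,\lambda}*g$ term by term produces three pieces.
\begin{itemize}
\item The initial condition contributes $X_0(\phi-f_{\alpha,\lambda}*\phi)$.
\item The drift contributes $K_\alpha*\theta-f_{\alpha,\lambda}*K_\alpha*\theta=\frac1\lambda f_{\alpha,\lambda}*\theta$. This follows from the resolvent identity \eqref{eq:flambda-eq}, $f_{\alpha,\lambda}+\lambda K_\alpha*f_{\alpha,\lambda}=\lambda K_\alpha$, which gives $K_\alpha-f_{\alpha,\lambda}*K_\alpha=\frac1\lambda f_{\alpha,\lambda}$. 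Equivalently, in Laplace terms, $L_{K_\alpha}(z)=z^{-\alpha}$ and $L_{f_{\alpha,\lambda}}(z)=\lambda/(z^\alpha+\lambda)$ by \eqref{eq:lapl_resolvent}.
\item The noise term is $\int_0^tK_\alpha(t-s)\sigma\,dW_s-\int_0^t f_{\alpha,\lambda}(t-u)\int_0^uK_\alpha(u-s)\sigma\,dW_s\,du$. I will apply the stochastic Fubini theorem (Walsh, Veraar) to rewrite the double integral as $\int_0^t (f_{\alpha,\lambda}*K_\alpha)(t-s)\sigma(s,X_s)\,dW_s$. The same kernel identity then collapses the whole noise term to $\frac1\lambda\int_0^t f_{\alpha,\lambda}(t-s)\sigma(s,X_s)\,dW_s$.
\end{itemize}
Together these three pieces give exactly \eqref{eq:Volterrameanrevert2}.

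For the converse, start from \eqref{eq:Volterrameanrevert2} and write it as $X=h-R'_{\alpha,\lambda}*X+\big(\text{stuff}\big)$. A cleaner route is the following. Set $h:=X_0\phi+K_\alpha*\theta+K_\alpha\stackrel{W}{*}\sigma(\cdot,X)$. The identity $\frac1\lambda f_{\alpha,\lambda}=K_\alpha-f_{\alpha,\lambda}*K_\alpha$ just used, together with stochastic Fubini, shows that \eqref{eq:Volterrameanrevert2} reads $X=h-f_{\alpha,\lambda}*h$. Then $Y:=h-\lambda K_\alpha*X$ solves $Y=h-\lambda K_\alpha*Y+\lambda K_\alpha*(Y-X)$. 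More directly, I will check that $Z:=X-(h-\lambda K_\alpha*X)$ satisfies the homogeneous equation $Z=-R'_{\alpha,\lambda}*Z\cdot(\pm)$. This uses Proposition~\ref{prop:W-H}(b): the map $x\mapsto x+\lambda K_\alpha*x$ inverts $x\mapsto x-f_{\alpha,\lambda}*x$. Concretely, apply $(\mathrm{Id}+\lambda K_\alpha*)$ to both sides of $X=h-f_{\alpha,\lambda}*h$. The composition $(\mathrm{Id}+\lambda K_\alpha*)(\mathrm{Id}-f_{\alpha,\lambda}*)=\mathrm{Id}$ (again from \eqref{eq:flambda-eq}) yields $X+\lambda K_\alpha*X=h$, which is \eqref{eq:Volterrameanrevert}. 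Uniqueness up to $dt$-a.e. equality in Proposition~\ref{prop:W-H} together with path continuity removes the null sets.

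The main obstacle is the rigorous justification of the stochastic Fubini interchanges. These require $\int_0^t\big(\int_0^t|f_{\alpha,\lambda}(t-u)|\,K_\alpha(u-s)^2\,du\big)^{1/2}\ldots$-type integrability, which holds because $K_\alpha\in L^2_{loc}$ (here $\alpha>\frac12$), $f_{\alpha,\lambda}\in L^1$, and $\E\int_0^t\sigma(s,X_s)^2ds<\infty$; if needed, the last condition follows from Hölder continuity (hence linear growth locally) of $\sigma$ and a localization by stopping times. A second point to watch is that the pathwise application of Proposition~\ref{prop:W-H} needs $g$ locally bounded. I will secure this by working with a continuous modification of the stochastic convolution, or alternatively by checking that the Wiener--Hopf argument only uses $g\in L^1_{loc}$ a.s.
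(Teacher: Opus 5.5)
Your proof is correct, but it is not the route the paper takes. The paper does not argue the statement itself; it invokes Proposition~3.2 of \cite{EGnabeyeu2025} specialized to $K_\alpha$. You instead give a self-contained derivation from tools already in the paper:
\begin{itemize}
\item the pathwise Wiener--Hopf inversion of Proposition~\ref{prop:W-H} with $\varphi=K_\alpha$, which is the same pathwise reading the paper uses in the proof of Theorem~\ref{MartRep};
\item the resolvent identity \eqref{eq:flambda-eq} in the form $K_\alpha-f_{\alpha,\lambda}*K_\alpha=\frac1\lambda f_{\alpha,\lambda}$;
\item a single stochastic Fubini interchange $f_{\alpha,\lambda}*\big(K_\alpha\stackrel{W}{*}\sigma(\cdot,X)\big)=(f_{\alpha,\lambda}*K_\alpha)\stackrel{W}{*}\sigma(\cdot,X)$.
\end{itemize}
The citation is shorter. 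Your version shows where the hypotheses enter: everything except the Fubini step is convolution algebra, and $\alpha>\frac12$ is what puts $K_\alpha$ and $f_{\alpha,\lambda}$ in $L^2_{loc}$, so that both stochastic convolutions and that step make sense.

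Some streamlining is worth doing.
\begin{itemize}
\item \emph{Forward direction without uniqueness.} Since convolution is commutative, \eqref{eq:flambda-eq} gives $(\mathrm{Id}-f_{\alpha,\lambda}*)(\mathrm{Id}+\lambda K_\alpha*)=(\mathrm{Id}+\lambda K_\alpha*)(\mathrm{Id}-f_{\alpha,\lambda}*)=\mathrm{Id}$ on pathwise locally integrable functions. So you can prove the forward direction exactly like your converse: apply $\mathrm{Id}-f_{\alpha,\lambda}*$ to $X+\lambda K_\alpha*X=g$ to get $X=g-f_{\alpha,\lambda}*g$ for every $t$. This bypasses the uniqueness part of Proposition~\ref{prop:W-H}. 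That part is proved via Laplace transforms, which need not be finite along a path $t\mapsto X_t(\omega)$ on all of $\R_+$ without a truncation argument.
\item \emph{Local boundedness of $g$.} This is immediate from $g=X+\lambda K_\alpha*X$ and continuity of $X$; no Kolmogorov-type argument is needed.
\item \emph{Stochastic Fubini.} The version in \cite{Veraar2012} only requires $\int_0^t|f_{\alpha,\lambda}(t-u)|\big(\int_0^uK_\alpha(u-s)^2\sigma(s,X_s)^2\,ds\big)^{1/2}du<\infty$ a.s. This holds as soon as $s\mapsto\sigma(s,X_s)$ is pathwise locally bounded, so you need neither moment bounds nor stopping times.
\item \emph{Converse.} Delete the abandoned sentences about $Y$ and $Z$; the argument beginning with ``Concretely'' is complete on its own.
\end{itemize}
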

	
	\noindent {\bf Proof:}	
	\smallskip This turns out to be a straightforward consequence of \cite[Proposition 3.2]{EGnabeyeu2025}, applied to the particular case of the \(\alpha-\) fractional integration kernel for \(\alpha \in (\frac12,\frac32)\).
	
	\subsection{A summary about Hawkes processes with initial state condition}
	\label{subsec:PrelimStationary}
	In this section, we state key properties of Hawkes processes with general kernels, time-varying baseline and initial state condition, including martingality and stationarity, using the tools introduced in Section~\ref{subsect-tools}.
	For clarity and conciseness, the proofs are omitted and postponed to Appendix \ref{app:lemmata}.\\
	The process $\Lambda:=\{\Lambda_t:t\geq 0\}$  is termed the \( \mathcal{F}_t \)-intensity of \( N \) if, for any interval \( (s, t] \) and $A\in \mathcal{F}_s$, the following holds almost surely: 
	\begin{equation}\label{eq:intensity}
		\E[N((s, t]) \mid \mathcal{F}_s] = \E\left[ \int_s^t \Lambda_u \, du \mid \mathcal{F}_s \right] \textit{or equivalently}\quad \E[(N_t-N_{s})\mathrm{1}_A]=\E[\int_s^t\Lambda_u\mathrm{1}_A du]. 
	\end{equation}
	The law of such a process is characterized by that predictable intensity, informally understood as the instantaneous probability of an event's arrival time. Specifically, this is expressed as:
	\[
	\P\left(N_t \text{ jumps in } [t, t+dt] \mid \mathcal{F}_t \right) = \Lambda_{t} \, dt,
	\]
	We denote by \( \mathcal{I}^\Lambda_\cdot = \int_0^\cdot \Lambda_s \, ds \) the integrated intensity.
	The Hawkes process $N$ has the compensator $\mathcal{I}^\Lambda$ which is its predictable quadratic variation and is given by \( \mathcal{I}^\Lambda_t = \int_0^t \Lambda_s \, ds \),  so that the compensated point process \footnote{ For a comprehensive introduction to point processes with stochastic intensities, the reader is referred to \cite{DaleyVereJones2002}.}
	\begin{equation}\label{eq:compensator} 
		M:=N-\mathcal{I}^\Lambda 
	\end{equation}
	is a $(\mathcal{F}_t)$-martingale by the very definition in equation \ref{HawkesDensityLinear} of the intensity \( \Lambda \), for any simple point process \( N \). The predictable process \(\mathcal{I}^\Lambda_t\) is also called the increasing process or the bracket of \(N\), and is denoted by \(\langle N \rangle_t\). For any predictable process \((H_t)_{t \geq 0}\) such that \(\mathbb{E} \left[ \int_0^t \left| H_s \right| N(ds) \right] < \infty\), it holds that \(\mathbb{E} \left[ \int_0^t H_s\, N(ds) \mid \mathcal{F}_t \right] = \mathbb{E} \left[ \int_0^t H_s\, \Lambda_s\, ds \mid \mathcal{F}_t \right]\). Taking \(H_s = \mathbf{1}_{[s, t]}\) for some interval \([s, t]\) yields the relation in equation~\eqref{eq:intensity}.
	
	\medskip
	\noindent
	We provide below a martingale representation theorem for the intensity process with initial state condition, in terms of the compensated point process with the resolvent defined in equation ~\eqref{eq:Resolvent}.
	\begin{theorem}[Martingale representation]\label{MartRep}
		The intensity process $\Lambda$ admits the representation
		\begin{equation}\label{SVR}
			\Lambda_t = Z_0 (t) + \mu(t) + (R^\prime_{-1}*(Z_0 + \mu ))_t + (R^\prime_{-1}\stackrel{M}{*}\mathbf{1})_t  ,\quad t\geq 0.
		\end{equation} 
		or equivalently,  \(\Lambda_t = Z_0(t) + \mu(t) - \int_0^t f_{-1}(t-s)(Z_0(s) + \mu(s) )ds - \int_0^t f_{-1}(t-s)dM_s ,\quad t\geq 0, \) where \( f_{\lambda} := - R'_{\lambda} \quad \forall \lambda \in \R\).
		Moreover if \(\; R'_{-1},\; \mathcal{I}^\varphi \in {\cal L}_{\text{loc}}^2(\mathbb{R}_+, \text{Leb}_1) \), the expected intensity (assume to be bounded) and the expected number of events are given respectively by
		\begin{equation} \label{eq:mart}
			\begin{split}
				\mathbb{E}[\Lambda_t] &= \mathbb{E}[Z_0 (t)] + \mu(t) + (R^\prime_{-1}*(\mathbb{E}[Z_0(\cdot)] + \mu(\cdot)))_t,  \implies \Lambda_t = \mathbb{E}[\Lambda_t] + \int_0^t R^{\prime }_{-1}(t-s)dM_s , %\quad \mathbb{E}[\Lambda_t^2] =\mathbb{E}[\Lambda_t]^2 +  \int_{0}^t  R_{-1}^{'2} (t-s) \mathbb{E}[\Lambda_s] ds
				\\
				\mathbb{E}[N_t] &= \mathbb{E}[\mathcal{I}^\Lambda_t] = \int_0^{t}(\mathbb{E}[Z_0(s)]+ \mu(s)) ds 
				+ ((\mathbb{E}[Z_0(\cdot)]+ \mu(\cdot)) *\mathcal{I}^{R^\prime_{-1}})_t
			\end{split}
		\end{equation}
		where \(	\mathcal{I}^{R^\prime_{-1}}_t := \int_0^t R^\prime_{-1}(s) ds = R_{-1}(t) - R_{-1}(0).\)
		In particular, if \(Z_0 \equiv 0\), the expected intensity, the expected squared intensity and the expected number of events are given respectively by, 
		\begin{equation}\label{eq:mart2}
			\mathbb{E}[\Lambda_t]= \mu(t) + (R'_{-1}*\mu)_t,\; \mathbb{E}[\Lambda_t^2] =\mathbb{E}[\Lambda_t]^2 +  (R^{\prime 2}_{-1} * \mathbb{E}[\Lambda_\cdot] )_t, \; 
			\mathbb{E}[N_t]= \mathbb{E}[\mathcal{I}^\Lambda_t]= \int_0^t \mu(s) ds + (\mu *\mathcal{I}^{R'_{-1}})_t
		\end{equation}	
	\end{theorem}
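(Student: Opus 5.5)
The representation \eqref{SVR} follows from one device: view the intensity as the solution of a deterministic Volterra (Fredholm) equation driven by a random forcing term, and invert it with the resolvent of the second kind. Write $N(ds) = \Lambda_s\,ds + dM_s$ in \eqref{HawkesDensityLinear2}, or in \eqref{eq:intensity2}, restricted to $(0,t)$. This gives, pathwise,
\[
\Lambda_t = \big(Z_0(t)+\mu(t) + (\varphi\stackrel{M}{*}\mathbf 1)_t\big) + \int_0^t \varphi(t-s)\Lambda_s\,ds ,
\]
that is, $\Lambda = g + \varphi*\Lambda$ with $g := Z_0+\mu+\varphi\stackrel{M}{*}\mathbf 1$. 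The function $g$ is locally integrable almost surely, since $Z_0,\mu\in L^1_{loc}$ and $\int_0^t\varphi(t-s)\,dN_s$ is a finite sum on bounded intervals.

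Next I would apply Corollary~\ref{corl:fredhom}, which is Proposition~\ref{prop:W-H}(a) with $\lambda=-1$, pathwise. It gives $\Lambda = g + \Psi_\varphi * g$ with $\Psi_\varphi=\sum_{k\ge1}\varphi^{*k}=R'_{-1}$. Local boundedness of $g$ may fail near jumps, so I would rather use the $L^1_{loc}$ uniqueness statement; the Neumann series converges in $L^1_{loc}$ for any $\varphi\in L^1_{loc}$. Expanding,
\[
\Lambda = Z_0+\mu + R'_{-1}*(Z_0+\mu) + \varphi\stackrel{M}{*}\mathbf 1 + R'_{-1}*(\varphi\stackrel{M}{*}\mathbf 1).
\]
The remaining task is to show that $\varphi\stackrel{M}{*}\mathbf 1 + R'_{-1}*(\varphi\stackrel{M}{*}\mathbf1) = R'_{-1}\stackrel{M}{*}\mathbf 1$. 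By stochastic Fubini (in the Walsh/Veraar form already used in the paper, or trivially pathwise since $M$ has finite variation on compacts), $R'_{-1}*(\varphi\stackrel{M}{*}\mathbf1) = (R'_{-1}*\varphi)\stackrel{M}{*}\mathbf1$. The resolvent identity $\varphi + R'_{-1}*\varphi = \Psi_\varphi = R'_{-1}$ then closes this step. The equivalent form with $f_{-1}=-R'_{-1}$ is just a relabeling.

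For \eqref{eq:mart}, I would take expectations in \eqref{SVR}. The stochastic convolution $\int_0^t R'_{-1}(t-s)\,dM_s$ has zero mean once $R'_{-1}\in L^2_{loc}$ and $\sup_{s\le t}\mathbb E\Lambda_s<\infty$: its bracket has expectation $\int_0^t R'^2_{-1}(t-s)\,\mathbb E\Lambda_s\,ds<\infty$, so it is a true $L^2$ martingale integral. Fubini handles the deterministic convolution with $\mathbb E Z_0$. Subtracting the expectation from \eqref{SVR} yields $\Lambda_t=\mathbb E\Lambda_t+\int_0^tR'_{-1}(t-s)\,dM_s$. For $\mathbb E N_t$, I would integrate the mean equation over $[0,t]$ and use $\int_0^\cdot (f*g) = (\int_0^\cdot f)*g$ to get the $\mathcal I^{R'_{-1}}$ term. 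I would also use $\mathbb E N_t=\mathbb E\mathcal I^\Lambda_t$, obtained by applying the martingale property of $M$ from \eqref{eq:compensator} with a localization argument and monotone convergence.

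For \eqref{eq:mart2}, I would set $Z_0\equiv0$ and square the centered representation. The Itô isometry for the compensated point process gives $\mathbb E\big(\int_0^tR'_{-1}(t-s)\,dM_s\big)^2=\int_0^tR'^2_{-1}(t-s)\,\mathbb E\Lambda_s\,ds$, since $\langle M\rangle=\mathcal I^\Lambda$. The cross term vanishes.

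The main obstacle is the integrability bookkeeping needed to justify the stochastic Fubini exchange and the vanishing of the martingale expectations. Before using these, I would first establish local boundedness of $t\mapsto\mathbb E\Lambda_t$. To do this, I would apply the deterministic Fredholm corollary to the mean equation $m = \mathbb E Z_0+\mu+\varphi*m$, obtained by localizing with stopping times, then use monotone convergence.
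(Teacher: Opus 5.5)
Your route to \eqref{SVR} is the paper's route. You write $\Lambda=g+\varphi*\Lambda$ with $g=Z_0+\mu+\varphi\stackrel{M}{*}\mathbf 1$, invert pathwise with Proposition~\ref{prop:W-H}(a) at $\lambda=-1$, and merge the two martingale terms by Fubini and $R'_{-1}*\varphi=R'_{-1}-\varphi$. The moment formulas then follow, as in the paper, from the $L^2$ isometry with $\langle M\rangle=\mathcal I^\Lambda$. Your two deviations are harmless. You get $\mathbb E N_t$ by integrating the mean formula rather than applying Corollary~\ref{corl:fredhom} to the integrated equation. You also prove local boundedness of $t\mapsto\mathbb E\Lambda_t$ instead of assuming it, which is an improvement.

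One step fails, though: the claim that subtracting the expectation from \eqref{SVR} yields $\Lambda_t=\mathbb E[\Lambda_t]+\int_0^tR'_{-1}(t-s)\,dM_s$. Subtracting actually gives
\[
\Lambda_t-\mathbb E[\Lambda_t]=\big(Z_0(t)-\mathbb E[Z_0(t)]\big)+\big(R'_{-1}*(Z_0-\mathbb E[Z_0])\big)_t+\int_0^tR'_{-1}(t-s)\,dM_s .
\]
The first two terms do not vanish, because $Z_0$ here is a genuinely random $\mathcal F_0$-measurable function. In the application, $Z_0^T(t)=\Lambda_0^{*T}\Phi^T(t)$ with random $\Lambda_0^{*T}$. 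So the unconditional centered identity in \eqref{eq:mart} holds only for deterministic $Z_0$, and the paper's proof in fact derives it only when $Z_0\equiv0$. In general your argument proves the conditional version $\Lambda_t=\mathbb E[\Lambda_t\mid\mathcal F_0]+\int_0^tR'_{-1}(t-s)\,dM_s$. This uses that $Z_0$ is $\mathcal F_0$-measurable, and that under your bounds ($R'_{-1}\in L^2_{\rm loc}$, locally bounded mean) the process $u\mapsto\int_0^uR'_{-1}(t-s)\,dM_s$ on $[0,t]$ is a square-integrable martingale null at $0$. State that version, or restrict the unconditional claim to deterministic $Z_0$. Your second-moment computation is unaffected, since you, like the statement, only claim it for $Z_0\equiv0$.
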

	\noindent
	We now investigate stationarity \footnote{Weak-stationarity in the sense of constant mean and stable autocovariance function in contrast to strong-stationarity where all finite-dimensional distributions are invariant under time shifts.} of Hawkes processes.
	For a simple point process \( N \), stationarity means that its distribution remains unchanged under time shifts. More formally, \( N \) is stationary if for any time shift \( t \), the process \( \theta_t N \) has the same distribution as the original process \( N \), where \( \theta_t \) is the shift operator defined by:
	\[\theta_t N(o) = N(t + o)  \quad \textit{for any} \quad o \in \mathcal{B}(\mathbb{R}).
	\]
	This condition implies that a stationary Hawkes process \( N \) exhibits stationary increments, and its intensity process $\Lambda:=\{\Lambda_t:t\geq 0\}$  is itself stationary, meaning that the distribution of \( \Lambda_t \) does not depend on \( t \), almost surely.
	A necessary condition for the process \( (\Lambda_t)_{t \in \mathbb{R}} \) to be in a stationary state is that its expected value (or verage intensity) \( \bar\Lambda := \mathbb{E}[\Lambda_t] \) is constant, positive, and finite. Therefore, before tackling the problem of the stationary regime of the Hawkes process whose intensity is given by equation~\eqref{HawkesDensityLinear2},
	a first step can be to determine when this equation has a constant mean (and even constant variance) i.e.
	$\mathbb{E}[\Lambda_t] = \mathbb{E}[\Lambda_0]$ for every $t \geq 0$. 
	\begin{proposition}[Stationarity of the first two moments]\label{prop:stat}
		Let \((N,\Lambda) \) be a Hawkes process.
		
		\begin{enumerate}
			\item If \(Z_0 \equiv 0\) (i.e., the process starts at time zero with no prior events), then the Hawkes process \((N,\Lambda) \) has constant first moment, if and only if
			
			\(\qquad \qquad \forall\, t\ge 0, \qquad \mu(t) = \mu(0)\left(1-\int_0^t \varphi(s)ds\right)\quad  dt-a.e. \quad \textit{and} \quad \mathbb{E}[\Lambda_t] = \mathbb{E}[\Lambda_0]=\mu(0).\)
			
			Moreover, if the second moment is assume to be constant, then the Hawkes process \((N,\Lambda) \) reduces to a homogeneous or stationary Poisson with intensity \(\mu (0)\)
			
			\item If \(Z_0 \not\equiv 0\) Then, the necessary and sufficient condition for the Hawkes process to have constant first moments is:
			\begin{itemize}
				\item \textit{Either} $\mu(t) = C^{\textit{ste}} = \mu > 0 $ \textit{and} $\int_0^{\infty} \varphi(s) \, ds < 0$, in which case $\mathbb{E}[\Lambda_t] = \bar\Lambda = \frac{\mu_0}{1 - \int_0^{\infty} \varphi(s) \, ds}$.\\
				And thus $$\Lambda_t \underset{t\to+\infty}{\to} \bar\Lambda + \int_0^t R^{\prime }_{-1}(t-s)dM_s $$
				\item \textit{Or} $\int_0^{\infty} \varphi(s) \, ds = 1$, and in this case $\bar\Lambda = +\infty$ (degenerate or nearly unstable case).
			\end{itemize}
		\end{enumerate}
		Therefore, when \( \mu > 0 \), the stability condition \( a = \mathit{\Phi}(0)= \int_0^{\infty} \varphi(s) \, ds < 1 \) is also necessary if we want Hawkes process \((N,\Lambda) \) to have a finite average intensity.
	\end{proposition}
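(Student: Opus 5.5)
The statement to prove is Proposition~\ref{prop:stat}. I would work from the mean equation in Theorem~\ref{MartRep}. Write $m(t):=\mathbb{E}[\Lambda_t]$ and $g(t):=\mathbb{E}[Z_0(t)]+\mu(t)$. Taking expectations in \eqref{HawkesDensityLinear2} gives the renewal equation
\[
m = g + \varphi * m ,
\]
whose solution by Corollary~\ref{corl:fredhom} is $m = g + R'_{-1}*g$. The equation is linear in $m$, so the two properties are equivalent: $m$ is constant equal to $m(0)=g(0)$ if and only if $g = m(0)\,(1-\varphi*\mathbf{1})$. Indeed, if $m\equiv m(0)$, substituting gives $g(t)=m(0)-m(0)\int_0^t\varphi(s)\,ds$. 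Conversely, if $g$ has this form, then $m\equiv m(0)$ solves the equation, and it is the solution by the uniqueness in Corollary~\ref{corl:fredhom}. Both directions hold up to $dt$-a.e.\ equality, which is why that qualifier appears in the statement.

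\emph{Case (1), $Z_0\equiv 0$.} Here $g=\mu$ and $m(0)=\mu(0)$, so the characterization reads directly $\mu(t)=\mu(0)\bigl(1-\int_0^t\varphi\bigr)$ with $\mathbb{E}[\Lambda_t]=\mu(0)$. For the second-moment claim I use \eqref{eq:mart2}:
\[
\mathbb{E}[\Lambda_t^2]=m(t)^2+(R_{-1}^{\prime 2}*m)(t)=\mu(0)^2+\mu(0)\int_0^t R_{-1}'(s)^2\,ds .
\]
This is constant in $t$ only if $R'_{-1}=0$ a.e. Since $R'_{-1}=\sum_{k\ge1}\varphi^{*k}\ge\varphi\ge 0$, this forces $\varphi\equiv 0$. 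Then $\Lambda_t=\mu(t)=\mu(0)$, so the process is a homogeneous Poisson process.

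\emph{Case (2), $Z_0\not\equiv 0$.} I take the stationary pre-history, under which \eqref{ExpectationZ} gives $\mathbb{E}[Z_0(t)]=m(0)\,\Phi(t)$. The characterization then becomes
\[
m(0)\Phi(t)+\mu(t)=m(0)\bigl(1-\Phi(0)+\Phi(t)\bigr),
\]
that is, $\mu(t)=m(0)\bigl(1-\Phi(0)\bigr)$, so $\mu$ is constant. Two sub-cases follow. If $\Phi(0)<1$ (I read the statement's ``$<0$'' as a typo for $<1$), then $\bar\Lambda=\mu/(1-\Phi(0))$, which is finite and positive. If instead $\Phi(0)=1$, then $\mu=0$, or, read the other way, a positive $\mu$ would force $\bar\Lambda=+\infty$; this is the degenerate nearly unstable case. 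The limit statement follows by inserting $m\equiv\bar\Lambda$ into the identity $\Lambda_t=\mathbb{E}[\Lambda_t]+\int_0^t R'_{-1}(t-s)\,dM_s$ from \eqref{eq:mart}. The final remark, that $\Phi(0)<1$ is necessary for a finite mean when $\mu>0$, follows from the two sub-cases.

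The main obstacle is rigor in case (2). Two points need care. First, $\mathbb{E}[Z_0(t)]=m(0)\Phi(t)$ presupposes that the mean on the negative half-line is the same constant; I would state this explicitly as the standing convention, as the paper does before \eqref{eq:intensity2}. Second, the infinite-mean case requires interpreting the renewal equation with $m=+\infty$. There I would argue by monotone convergence on truncated kernels rather than manipulating infinite quantities directly.
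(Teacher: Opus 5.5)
Your characterization of the constant-mean condition in both cases, and your reduction to a homogeneous Poisson process, follow the paper's proof. You use the same renewal/resolvent equation for $\mathbb{E}[\Lambda_t]$, the same variance term from \eqref{eq:mart2}, and the same identity $\mathbb{E}[Z_0(t)]=\mathbb{E}[\Lambda_0]\,\Phi(t)$ to force $\mu(t)\equiv\mathbb{E}[\Lambda_0](1-\Phi(0))$. The differences are cosmetic. You verify the characterization by substituting $m\equiv m(0)$ into $m=g+\varphi*m$ and invoking uniqueness, and you get $\varphi\equiv 0$ from $R'_{-1}=\sum_{k\ge 1}\varphi^{*k}\ge\varphi\ge 0$. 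The paper instead inverts $\mu+R'_{-1}*\mu=\mu(0)$ through Proposition~\ref{prop:W-H}(b) and appeals to injectivity of the Laplace transform.

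The step that does not work as written is the asymptotic claim in case~(2). You take the identity $\Lambda_t=\mathbb{E}[\Lambda_t]+\int_0^t R'_{-1}(t-s)\,dM_s$ from \eqref{eq:mart}, but it is exact only when $Z_0$ is deterministic; the proof of Theorem~\ref{MartRep} establishes it only for $Z_0\equiv 0$. Under the stationary pre-history, $Z_0(t)=\int_{-\infty}^0\varphi(t-s)\,dN_s$ is random. Combining \eqref{SVR} with $\mathbb{E}[\Lambda_t]\equiv\bar\Lambda$ gives instead $\Lambda_t-\bar\Lambda-\int_0^t R'_{-1}(t-s)\,dM_s=\rho_t$, where $\rho_t:=\bigl(Z_0(t)-\mathbb{E}[Z_0(t)]\bigr)+\int_0^t R'_{-1}(t-s)\bigl(Z_0(s)-\mathbb{E}[Z_0(s)]\bigr)\,ds$. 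If your identity held, the claim would be an equality for every $t$. It is only asymptotic precisely because of $\rho_t$, and since its right-hand side depends on $t$, it has to be read as $\rho_t\to 0$.

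The paper's proof does not argue this point either, but it closes in a few lines. Since $Z_0\ge 0$ and $\mathbb{E}[Z_0(s)]=\bar\Lambda\,\Phi(s)$, one has $\mathbb{E}|\rho_t|\le 2\bar\Lambda\bigl(\Phi(t)+(R'_{-1}*\Phi)(t)\bigr)$. Redo the computation in Step~2 of the proof of Corollary~\ref{Corol:LimitMeasure} with $a=\Phi(0)<1$ in place of $a_T$. It yields $\Phi(t)+(R'_{-1}*\Phi)(t)=a-(1-a)\int_0^t R'_{-1}(s)\,ds$. Because $\int_0^\infty R'_{-1}=\sum_{k\ge 1}a^k=\frac{a}{1-a}$, this tends to $a-a=0$. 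Hence $\Lambda_t-\bar\Lambda-\int_0^t R'_{-1}(t-s)\,dM_s\to 0$ in $L^1$.
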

	\noindent Note that this result is instrumental in establishing the convergence to an inhomogeneous fractional CIR process with a constant mean-reverting coefficient, as emphasized in the Remark devoted to~\eqref{eq:frac_CIR1}--~\eqref{eq:frac_CIR2}.
	
	\section{A note on the choice of the inhomogeneous intensity \(\mu^T(t)\)}
	\label{app:choose_mu}
	Following \cite{JaissonRosenbaum2015,JaissonRosenbaum2016,ElEuchFukasawaRosenbaum2018},  the suitably rescaled version of the intensity process asymptotically behaves as a diffusion process with constant mean-reversion parameter and with initial value equal to zero. As in \cite{ElEuchR2018, el2019characteristic}, we would like to obtain a particular diffusion with a time-dependent mean-reversion level and a non-zero starting value in the limit: That is more precesily, a diffusion of the form:
	\begin{equation}\label{eqDym}
		X_t = X_0\phi(t) +\int_0^t K(t-s)\lambda(\theta(s)- X_s)ds +  \int_0^t K(t-s)\varsigma(s) \sqrt{X_s}dW_s, \quad X_0\perp\!\!\!\perp W,
	\end{equation}
	where $K$ is the fractional integration kernel, $ \phi \!\in {\cal L}^1_{\R_+}({\rm Leb}_1)$ a continuous function and \( (W_t)_{t \geq 0} \) a standard Brownian motion independent from the \( \mathbb{R} \)-valued initial random variable \( X_0 \), both defined on a probability space \( (\Omega, \mathcal{A}, \mathbb{P}) \).
	Naturally, this expression should correspond to the weak limit of the intensity process given by equation~\eqref{SVR2}, which we recall below for clarity.
	\begin{equation}\label{SVR2_}
		\Lambda^T_t = \Lambda_0^{*T} \left( \mathit{\Phi}^T(t) + (R^{\prime T}_{-1} * \mathit{\Phi}^T)_t \right) + \mu^T(t) + \int_0^t R^{\prime T}_{-1}(t-s)\mu^T(s) \, ds + \int_0^t R^{\prime T}_{-1}(t-s) \, dM^T_s 
	\end{equation}
	From Proposition \ref{prop:wiener_hopf} in Section \ref{subsect-tools} , the dynamic in equation~\eqref{eqDym} is equivalent to:
	\begin{equation}
		X_t= X_0\big(\phi(t)- \int_0^t f_{\lambda}(t-s)\phi(s)ds\big) + \int_0^t f_{\alpha, \lambda}(t-s)\theta(s)ds + \frac{1}{\lambda}\int_0^t f_{\alpha, \lambda}(t-s)\varsigma(s) \sqrt{X_s}dW_s.
	\end{equation}
	Using the same heuristic arguments as in Section \ref{subsect:intuition}, and reasoning backward with \(\frac{\mu^T(tT)}{\mu^T} \equiv \frac{\theta(t)}{\theta_0}\), we observe that we should obtain this dynamic in the limit, provided that we work with a process \(\Lambda^{*T}_t\) with the following expression:
	{\small
	\begin{align*}
		\Lambda^{*T}_t &= \Lambda_0^{*T} \, (\mathit{\Phi}^T(tT) + (R^{\prime T}_{-1}*\mathit{\Phi}^T)(tT))\frac{1 - a_T}{\tilde\mu^T(\frac{t}{T})}  + \frac{\mu^T}{\tilde\mu^T(\frac{t}{T})}\int_0^{t} T(1 - a_T) R^{\prime T}_{-1}(T(t - s))\frac{\theta(s)}{\theta_0} \, ds \\
		&\quad  \hspace{1.5cm}+ (1 - a_T)\frac{\mu^T}{\tilde\mu^T(\frac{t}{T})} + \frac{1 }{\tilde\mu^T(\frac{t}{T})}\int_0^{t} \sqrt{\frac{\tilde\mu^T(\frac{s}{T})}{T(1 - a_T)}} T(1 - a_T) R^{\prime T}_{-1}(T(t - s)) \, \sqrt{\Lambda^{*T}_s} \, dW^T_{s}
	\end{align*}
	}
	This is equivalent to (inverse normalisation):
	\begin{equation}\label{SVR4} 
		\Lambda^T_t = \Lambda_0^{*T} \left( \mathit{\Phi}^T(t) + (R^{\prime T}_{-1} * \mathit{\Phi}^T)_t \right) + \mu^T + \frac{\mu^T }{\theta_0} \int_0^t R^{\prime T}_{-1}(t-s) \theta(\frac{s}{T})ds + \int_0^t R^{\prime T}_{-1}(t-s)dM^T_s ,\quad t\geq 0.
	\end{equation} 
	Therefore, identifying parameters in \eqref{SVR2_} and \eqref{SVR4} , this indicates that we should take $\mu^T(\cdot)$ such that
	\begin{equation}
		\label{eqaresoudre}  
		\mu^T(t) + \int_0^t R^{\prime T}_{-1}(t-s) \mu^T(s) ds  = \mu^T + \mu^T \int_0^t R^{\prime T}_{-1}(t-s)\bar\theta(\frac{s}{T})ds .
	\end{equation}
	where we set \(\bar\theta(\frac{\cdot}{T})= \frac{\theta(\frac{\cdot}{T}) }{\theta_0}.\)
	We may read the above equation~\eqref{eq:Volterrameanrevert}  ``pathwise'' as a Wiener-Hopf equation with $x(t)= X_t(\omega)$ and \(h(t) =  \mu^T + \mu^T \int_0^t R^{\prime T}_{-1}(t-s)\bar\theta(\frac{s}{T})ds.\)
	Then, applying Proposition~\ref{prop:W-H} (b), we get: \( \mu^T(t) = h(t) - (\varphi^T*h)_t \).
	From the second term in the right-hand side of the above equation, we get: 
	\begin{align*}
		\frac{1}{\mu^T}(\varphi^T*h)_t &=\big(\varphi^T* 1 \big)_t
		+ \int_0^t \varphi^T(t-s) \int_0^s  R^{\prime T}_{-1}(s-u)\bar\theta(\frac{u}{T}) du ds  \\
		&= \int_0^t \varphi^T(t-s) ds +  \int_0^t \int_0^{t-u}  R^{\prime T}_{-1}(s)\varphi^T(t-u-s)    ds \bar\theta(\frac{u}{T})du \\
		&= \int_0^t \varphi^T(t-s)  ds +  \int_0^t  \big(R^{\prime T}_{-1}(t-u) - \varphi^T(t-u)\big)\bar\theta(\frac{u}{T})  du \\
		&=  \big(R^{\prime T}_{-1}*\bar\theta(\frac{\cdot}{T})\big)_t + \int_0^t \varphi^T(t-u)\big(1-\bar\theta(\frac{u}{T})\big)  du .
	\end{align*}
	Consequently, after simplification, the following equality should hold for a well-chosen $\mu^T(t)$:
	
	\begin{equation}
		\label{eq:mu}
		\frac{\mu^T(t)}{\mu^T} = 1 -\int_0^t \varphi^T(t-u)\big(1-\bar\theta(\frac{u}{T})\big)  du = 1 -\int_0^t \varphi^T(t-u)\big(1-\frac{\theta(\frac{u}{T}) }{\theta_0}\big)  du .
	\end{equation}
	Note that, this is different from the equation provided in \cite{ElEuchR2018} since the initial condition or initial random variable does not comes from the method described above.
	
	\section{Supplementary materials and proofs. }\label{app:lemmata}
	
	\subsection{Proof of Proposition~\ref{prop:W-H}, Theorem~\ref{MartRep} and Proposition~\ref{prop:stat}}
		
	\noindent {\bf Proof of Proposition~\ref{prop:W-H}:}
	
	\smallskip
	\noindent  {\sc Step~1.}{\em Proof of the first claim (a):}
	First  note that, by an integration by parts, for every $t>0$, \(	tL_{R_{\lambda}} (t)= 1 +L_{R'_{\lambda}}(t).\)
	On the other hand it follows from~\eqref{eq:Resolvent}  that $tL_{R_\lambda}(t)(1+\lambda L_\varphi(t))=1 $, $t>0$. 
	Consequently, 
	\[L_x (t)= \frac{L_g(t)}{1+\lambda L_{\varphi}(t)}= tL_g L_{R_{\lambda}}(t)= L_g(t)  \big(1 +L_{R'_{\lambda}}(t)\big)= L_{g+ R'_{\lambda}*g}(t)\]
	which completes the proof since Laplace transform is injective. \hfill $\Box$
	
	\medskip
	\noindent  {\sc Step~2.}{\em Proof of the second claim (b):}
	Convoluting $x(t) + \int_0^t R'_{\lambda}(t-s) x(s) ds$ by $\varphi$ together with the fact that $R'_{\lambda}*\varphi = - \varphi - \frac{R'_{\lambda}}{\lambda}$(see equation ~\ref{eq:flambda-eq} ), we obtain:
	\begin{align*}
		\int_0^t h(s) \varphi(t-s) &\; ds = \int_0^t x(s) \varphi(t-s) ds + \int_0^t \int_0^s R'_{\lambda}(s-u) x(u) du \varphi(t-s) ds\\ 
		&= \int_0^t x(s) \varphi(t-s) ds + \int_0^t \int_0^{t-u} R'_{\lambda}(t-u-s) \varphi(s)  ds x(u) du \\
		&= \int_0^t x(s) \varphi(t-s) ds + \int_0^t \big(- \frac{R'_{\lambda}}{\lambda}(t-u)-\varphi(t-u)\big) x(u) du  
		= - \frac{1}{\lambda}\int_0^t R'_{\lambda}(t-s) x(s) ds.
	\end{align*}
	Replacing back in equation ~\eqref{eq:Wiener-Hopf22} gives the results.
	
	\medskip
	\noindent {\bf Remark:}
	We could have rather used Laplace transform in equation ~\eqref{eq:Wiener-Hopf22} and deduce that:
	
	\(\qquad \qquad \qquad \qquad L_x (t)= \frac{L_h(t)}{1+ L_{R'_{\lambda}}(t)}= L_h(t)  \big(1 + \lambda L_{\varphi}(t)\big)= L_{h+ \lambda\varphi*h}(t).\)
	
	\noindent where the penultimalte equality come from applying laplace transform to $R'_{\lambda}*\varphi = - \varphi - \frac{R'_{\lambda}}{\lambda}$(see equation ~\ref{eq:flambda-eq} ). We next conclude by the injectivity of Laplace transform.\hfill $\Box$
	
	\medskip
	\noindent {\bf Proof of Theorem~\ref{MartRep}:}
	Combining equations ~\eqref{HawkesDensityLinear2} and ~\eqref{eq:compensator} , we write:
	\begin{equation}\label{eq:int}
		\Lambda_t = Z_0(t) + \mu(t) + \int_0^t \varphi(t-s)\Lambda_sds + \int_0^t \varphi(t-s)dM_s ,\quad t\geq 0,
	\end{equation}
	Integrating both sides of (\ref{eq:int}) and then using that $
	\int_0^{\cdot} (f * g)(s) \, ds = \left( \int_0^{\cdot} f(s) \, ds \right) * g
	$, together with stochastic Fubini's theorem,\footnote{Sufficient conditions for interchanging ordinary integration (with respect to a finite measure) and stochastic integration (with respect to a square-integrable martingale) are given in \cite[Thm.~1]{Kailath_Segall}; see also \cite[Thm.~IV.65]{Protter} for further details.}
	the cumulated or integrated intensity can be represented as: 
	For all $t\geq 0$, we have
	\begin{equation}\label{eq:cum_int}
		\int_0^{t} \Lambda_sds= \int_0^{t}(Z_0(s)+ \mu(s)) ds+\int_0^t \varphi(t-s)(\int_0^s \Lambda_u du)ds+\int_0^t (\int_0^{t-s}\varphi(u)du) dM_s.
	\end{equation}
	The first two terms on the right side of equation (\ref{eq:cum_int}) specify the average direct and indirect impact of external events on the arrivals of future events. 
	The third term specifies the random variations in the number of child events on the arrivals of future events. 
	It follows by Fubini's theorem that,
	\begin{equation}\label{eq:ex_cum_int}
		\mathbb{E}[\mathcal{I}^\Lambda_t]= \mathbb{E}[\int_0^{t} \Lambda_sds ]= \int_0^{t}(\mathbb{E}[Z_0(s)]+ \mu(s)) ds+\int_0^t \varphi(t-s)\mathbb{E}[\int_0^s \Lambda_u du]ds.
	\end{equation}
	Back to equation ~\eqref{eq:int} above, it can be interpreted pathwise as a Wiener-Hopf equation with $x(t) = \Lambda_t(\omega)$, \(\lambda=-1\) and \(g(t) = Z_0(t) + \mu(t) + \int_0^t \varphi(t-s)dM_s = Z_0(t) + \mu(t) + (\varphi \stackrel{M}{*} \mathbf{1})_t.\)
	This leads by applying Proposition \ref{prop:W-H} (a) to the following expression for $\Lambda_t$:
	\begin{align*}
		\Lambda_t &= g(t) + \int_0^t R'_{-1}(t-s) g(s) \, ds \\&= Z_0(t) + \mu(t) + \int_0^t \varphi(t-s)dM_s + \int_0^t R'_{-1}(t-s) \left[Z_0(s) +\mu(s) +\int_0^s \varphi(s-r)dM_r \right] \, ds.
	\end{align*}
	Using commutativity and associativity (via stochastic Fubini's theorem) of convolution together with the fact that $R'_{-1} \ast \varphi =R'_{-1}-\varphi$ (equation ~\eqref{eq:flambda-eq} of the Remark in section~\ref{subsect-tools} ), we obtain:
	\begin{align*}
		&\ \int_0^t R'_{-1}(t-s) \int_0^s \varphi(s-r) dM_r ds= \int_0^t \int_0^t \mathrm{1}_{r\leq s}  R'_{-1}(t-s) \varphi(s-r)dsdM_r\\
		&= \int_{0}^t \int_0^{t-r} R'_{-1}(t-r-s)  \varphi(s)ds dM_r = \int_{0}^t  (R'_{-1} \ast \varphi)(t-r) dM_r= \int_{0}^t  R'_{-1} (t-r) dM_r-\int_{0}^t  \varphi (t-r) dM_r
	\end{align*}
	\[\text{We then have: }\;\qquad\qquad \Lambda_t = Z_0(t) + \mu(t) + \int_0^t R'_{-1}(t-s) (Z_0(s) + \mu(s)) \, ds + \int_{0}^t  R'_{-1} (t-s) dM_s.\qquad\qquad\]
	Note that, in the particular setting \(Z_0 \equiv 0\), the associated martingale representation for the intensity process has been established by \cite{BacryDelattreHoffmannMuzy2013} using the variation of constants formula for linear Volterra integral equations as given in \cite[p.36, Equation (1.2)]{gripenberg1990}
	
	\noindent As, for every $t \in [0, T ],$ $T > 0$,
	$\mathbb{E}\left[\left(\int_{0}^t  R'_{-1} (t-s) dM_s\right)^2\right] = \int_{0}^t  R_{-1}^{'2} (t-s) \mathbb{E}[\Lambda_s] ds < C_T \int_{0}^t  R_{-1}^{'2}(s)ds $
	we have, for every $t \geq 0$,
	$\mathbb{E}\left[\int_{0}^t  R'_{-1} (t-s) dM_s\right]=0$
	so that:
	$$\mathbb{E}[\Lambda_t] =  \mathbb{E}[Z_0(t)] + \mu(t) + \int_0^t R'_{-1}(t-s) (\mathbb{E}[Z_0(s)] + \mu(s))  \, ds. $$ 
	Thus, the martingale representation theorem provides an exact representation of the mean (and even the second moment) of the intensity of the Hawkes
	process \((N,\Lambda)\) in term of the resolvent as stated above.
	Alternatively, we could have noticed that, as $\Lambda_t = Z_0(t) + \mu(t) + \int_{(0,t)} \varphi(t-s)N(ds), t>0 $ from \ref{HawkesDensityLinear2}, taking expectatiuon on both side of that inequality lead to:
	$$\mathbb{E}[\Lambda_t] =  \mathbb{E}[Z_0(t)] + \mu(t) + \int_0^t \varphi(t-s)\mathbb{E}[\Lambda_s] ds $$
	This still can be interpreted as a Wiener-Hopf equation with $x(t) = \mathbb{E}[\Lambda_t]$, \(\lambda=-1\) and $
	g(t) = \mathbb{E}[Z_0(t)] + \mu(t)$.
	By applying Proposition~\ref{prop:W-H} (a) , this lead to the following expression for $\mathbb{E}[\Lambda_t]$:
	$$\mathbb{E}[\Lambda_t] =  \mathbb{E}[Z_0(t)] + \mu(t) + \int_0^t R'_{-1}(t-s) (\mathbb{E}[Z_0(s)] + \mu(s))  \, ds$$
	so that in the particular settings where \(Z_0 \equiv 0\), we have: \(\mathbb{E}[\Lambda_t] = \mu(t) + \int_0^t R'_{-1}(t-s) \mu(s)  \, ds\) i.e. \(\Lambda_t = \mathbb{E}[\Lambda_t] + \int_{0}^t  R'_{-1} (t-s) dM_s\)
	and for the second moment,
	\begin{align*}
		\mathbb{E}[\Lambda_t^2] &= \mathbb{E}[\Lambda_t]^2 + \mathbb{E}\left[\left(\int_{0}^t  R'_{-1} (t-s) dM_s\right)^2\right] +2 \mathbb{E}\left[\mathbb{E}[\Lambda_t]\int_{0}^t  R'_{-1} (t-s) dM_s\right]\\
		&=\mathbb{E}[\Lambda_t]^2 +  \int_{0}^t  R_{-1}^{'2} (t-s) \mathbb{E}[\Lambda_s] ds
	\end{align*}
	
	\noindent Let us now rewrite expectation of the cumulated intensity ~\eqref{eq:ex_cum_int} (Note that, we can also derive it directly from the equation~\eqref{SVR}). 
	For all $t\geq 0$, we have
	\begin{equation}\label{eq:cum_intensity}
		\mathbb{E}[\mathcal{I}^\Lambda_t]= \int_0^{t}(\mathbb{E}[Z_0(s)]+ \mu(s)) ds+\int_0^t \varphi(t-s)\mathbb{E}[\mathcal{I}^\Lambda_s]ds.
	\end{equation}
	This together with the Corollary~\ref{corl:fredhom} and regular Fubini's theorem in the second equality yields:
	\begin{align*}
		\mathbb{E}[\mathcal{I}^\Lambda_t] &=\int_0^{t}(\mathbb{E}[Z_0(s)]+ \mu(s)) ds + \int_0^t R'_{-1}(t-s)\int_0^{s}(\mathbb{E}[Z_0(u)]+ \mu(u)) duds.\\
		&=\int_0^{t}(\mathbb{E}[Z_0(s)]+ \mu(s)) ds + \int_0^t (\mathbb{E}[Z_0(t-s)]+ \mu(t-s)) (\int_0^{s}R'_{-1}(u)du)ds.
	\end{align*} 
	This complete the proof and we are done.\hfill $\Box$
	
	\medskip     
	\noindent {\bf Proof  of Proposition~\ref{prop:stat}.}
		
		\smallskip
		\noindent  {\sc Step~1} {\em (The case \(Z_0 \equiv 0\) ):}
		Using Equation~\eqref{eq:mart2} of Theorem \ref{MartRep}, we derive that, for every $t \geq 0$,
		$$\mathbb{E}[\Lambda_t]= \mu(t) + (R^\prime_{-1}*\mu)_t \quad \text{and} \quad {\rm Var}(\Lambda_t)=\mathbb{E}[\Lambda_t^2] -\mathbb{E}[\Lambda_t]^2 =  \int_{0}^t  R_{-1}^{'2} (t-s) \mathbb{E}[\Lambda_s] ds.$$
		Hence $\mathbb{E}[\Lambda_t]$ is constant if and only if: $\forall t\geq 0 \quad \mu(t) + (R^\prime_{-1}*\mu)_t = \mu(0)$.
		And then, applying Proposition~ \ref{prop:W-H} (b), we deduce that:
		
		\(\qquad \qquad \forall\, t\ge 0, \qquad \mu(t) = \mu(0)\left(1-\int_0^t \varphi(s)ds\right)\quad  dt-a.e. \quad \textit{and} \quad \mathbb{E}[\Lambda_t] = \mathbb{E}[\Lambda_0]=\mu(0).\)
		
		\noindent Conversely one checks that for this value of $\mu(t)$, the expectation of $\Lambda_t$ is constant.
		Assuming constant mean, the above variance becomes: \(\forall\, t\ge 0, \quad {\rm Var}(\Lambda_t) = \mathbb{E}[\Lambda_0] \int_{0}^t  R_{-1}^{'2} (t-s)  ds = \mathbb{E}[\Lambda_0] \int_{0}^t  R_{-1}^{'2} (s)  ds.\)
		Thus ${\rm Var}(\Lambda_t)$ constant implies $\varphi \equiv 0$, (Take the derivative and owe to the injectivity of Laplace transform).
		It boils down that \((N,\Lambda) \) is a homogenous poisson with intensity \(\mu (0)\)
		
		\medskip
		\noindent  {\sc Step~2} {\em (The case \(Z_0 \not\equiv 0\) ):}
		Set \(\mu(0) = \mu \in \R_+ \), thus $\Lambda_0 = Z_0(0) + \mu \in L^1(\R_+ ) $ so that:\\ \begin{equation}\label{Expectation0}
			\mathbb{E}[\Lambda_0] = \mathbb{E} [ Z_0(0)] + \mu =
			\mathbb{E} \left[ \int_{-\infty}^0 \varphi(-s) \, dN_s \right] + \mu = \int_{-\infty}^0 \varphi(-s) \, \mathbb{E} [ \Lambda_s]ds + \mu
		\end{equation} 
		Using equation~\eqref{eq:mart2} of Theorem \ref{MartRep}, we derive that, for every $t \geq 0$,
		$$\mathbb{E}[\Lambda_t]= \mathbb{E}[Z_0 (t)] + \mu(t) + (R^\prime_{-1}*(\mathbb{E}[Z_0(\cdot)] + \mu(\cdot)))_t$$
		Equation~\eqref{Expectation0} lead to $\mathbb{E}[\Lambda_0] = \frac{\mu}{1-\int_0^{+\infty} \varphi(s)ds}.$
		Hence $\mathbb{E}[\Lambda_t]$ is constant if and only if: $$\forall t\geq 0 \quad \mathbb{E}[Z_0 (t)] + \mu(t) + (R^\prime_{-1}*(\mathbb{E}[Z_0(\cdot)] + \mu(\cdot)))_t = \mathbb{E}[\Lambda_0] = \mathbb{E} [ Z_0(0)] + \mu$$
		And then, applying Proposition ~\ref{prop:W-H} (b), we deduce that:
		$$\forall\, t\ge 0, \qquad \mu(t) = \mathbb{E}[\Lambda_0] \left(1-\int_0^t \varphi(s)ds\right) - \mathbb{E}[Z_0 (t)] \quad  dt-a.e. \quad \textit{and} \quad \mathbb{E}[\Lambda_t] = \mathbb{E}[\Lambda_0]= \frac{\mu}{1-\int_0^{+\infty} \varphi(s)ds}$$
		Moreover for \(t\geq0\), equation~\eqref{ExpectationZ} gives
		$ \mathbb{E}[Z_0 (t)] = \mathbb{E}[\Lambda_0]\int_t^{+\infty} \varphi(s)ds $ so that \\$\mu(t) = \mathbb{E}[\Lambda_0] \left(1-\int_0^{+\infty} \varphi(s)ds\right) = \mu \quad dt-a.e.$
		Consequently, the Hawkes process has at least constant mean i.e. $\mathbb{E}[\Lambda_t] = \mathbb{E}[\Lambda_0]$ for every $t \geq 0$ if 
		$$\forall\, t\ge 0, \qquad \mu(t) \equiv C^{\textit{ste}} =\mu(0) = \mu \quad  dt-a.e. \quad \textit{and} \quad \mathbb{E}[\Lambda_t] = \mathbb{E}[\Lambda_0]= \frac{\mu}{1-\mathit{\Phi}(0) }.$$
		Conversely one checks that for this value of $\mu(t)$, the expectation of $\Lambda_t$ is constant.
		This leads to the results stated in proposition~\ref{prop:stat} (2), since the equation must be true for any t.

		\medskip
		\noindent Assume that $Z_0(t)$ is a square-integrable stochastic process and $R^\prime_{-1} \in L^2_{\text{loc}}(\mathbb{R}_+)$.
		Recall from Theorem \ref{MartRep}, the representation: \(\Lambda_t = Z_0(t) + \mu(t) + (R^\prime_{-1} * (Z_0 + \mu))_t + (R^\prime_{-1} \stackrel{M}{*} \mathbf{1})_t.\)
		Then using elementary computations and It\^o's type Isometry for stochastic integral with respect to a local martingale, the second moment of $\Lambda_t$ is given by: \(\mathbb{E}[\Lambda_t^2] = \mathbb{E}\left[ \left( Z_0(t) + \mu(t) + (R^\prime_{-1} * (Z_0 + \mu))_t \right)^2 \right]
		+ \int_0^t \left( R^\prime_{-1}(t - s) \right)^2 \mathbb{E}[\Lambda_s] \, ds.\)
		Hence, the variance is:
		\[
		\operatorname{Var}[\Lambda_t] = \mathbb{E}\left[(\Lambda_t-\mathbb{E}[\Lambda_t])^2\right] = \operatorname{Var}\left(Z_0(t)\right) +
		\mathbb{E}\left[(R^\prime_{-1} * (Z_0(\cdot)-\mathbb{E}[Z_0(\cdot)]))^2_t\right] 
		+ \int_0^t \left( R^\prime_{-1}(t - s) \right)^2 \mathbb{E}[\Lambda_s] \, ds.
		\]
		Obtaining a constant variance over time requires stringent conditions on the process \(Z_0 \). In the remark below, we analyse the particular case \(Z_0 \equiv 0\), which eliminates the dependence on the initial randomness and on its convolutional propagation. \hfill $\Box$
		
	\subsection{Proof of Proposition~\ref{Lem:Lhawkes}, Proposition~\ref{prop:stability} and Corollary \ref{Corol:LimitMeasure}}
	\noindent {\bf Proof of Proposition \ref{Lem:Lhawkes}.}
		\smallskip
		\noindent  {\sc Step~1} {\em (Convergence of Laplace transform).} 
		By the properties of Laplace transform, we have:
		{\small
		\begin{align}
			&\, \mathcal{L}\left(\int_0^{t} T(1 - a_T) R^{\prime T}_{-1}(Tu) \, du \right)(z)= \frac{1}{z} \mathcal{L}\left( T(1 - a_T) R^{\prime T}_{-1}(T\cdot)  \right)(z) = \frac{T(1 - a_T)}{z} \mathcal{L}\left(  R^{\prime T}_{-1}(T\cdot) \right)(z) \notag \\
			&\hspace{2cm}=\frac{(1 - a_T)}{z} \mathcal{L}\left(  R^{\prime T}_{-1} \right)(\frac{z}{T}) = \frac{(1 - a_T)}{z}\left[\frac{z}{T}L_{R^{T}_{-1}}(\frac{z}{T}) - R^{ T}_{-1}(0)\right] 
			= \frac{(1 - a_T)}{z} \frac{ L_{\varphi^{T}} (\frac{z}{T})}{1- L_{\varphi^{T}} (\frac{z}{T})}\notag
		\end{align}
		}
		where the penultimate equatily follows from equation ~\eqref{eq:Lapl_Resolvent}.
		We then deduce that:
		{\small
		\begin{equation}\label{eq:lapltrans}
			\mathcal{L}\left(\int_0^{t} T(1 - a_T) R^{\prime T}_{-1}(Tu) \, du \right)(z)  = \frac{(1 - a_T)}{z} \frac{ a_T L_{\varphi} (\frac{z}{T})}{1- a_T L_{\varphi} (\frac{z}{T})}  = \frac{1}{z} \frac{ a_T L_{\varphi} (\frac{z}{T})}{1
				+ \frac{a_T}{1 - a_T}(1- L_{\varphi} (\frac{z}{T})) }
		\end{equation}
		}
		\noindent The monotonicity of \( L_\varphi \) and the assumption that \( a_T \to 1 \) induce that \(\lim_{T \to \infty} a_T \cdot L_\varphi\left( \frac{z}{T} \right) = 1, \quad z > 0.\)
		To study the asymptotics of the last denominator in equation~\eqref{eq:lapltrans}, we rely on the fact that the tail distribution \( \Phi: t \to \int_t^\infty \varphi(s) \, ds \), is regularly varying with index  \(  -\alpha \).	
		Now, by an integration by parts and using that \( \|\varphi\|_1 = 1 \), we get \(L_{\varphi}(z) = \int_0^{\infty} \varphi(x) e^{-zx} \, dx = 1 - z \int_0^{\infty} \int_x^{\infty} \varphi(u) \, du e^{-zx} \, dx.\)
	  Therefore, by  change of variables,
	 \[L_{\varphi}(z) = 1 - z \int_0^{\infty} \mathit{\Phi}(x) e^{-zx} \, dx =1 -  \int_0^{\infty} \mathit{\Phi}(\frac{x}{z}) e^{-x} \, dx  = 1 - \frac{z^\alpha}{\alpha} \int_0^{\infty} \alpha\left( \frac{x}{z} \right)^{\alpha} \mathit{\Phi}(\frac{x}{z}) x^{-\alpha}e^{-x} \, dx.\]
		\noindent Hence, using Assumption \ref{assum:Lhawkes} (2) together with the dominated convergence theorem, we obtain :
		
		\[L_{\varphi}(z) = 1 - \frac C\alpha \left(\int_0^{\infty} x^{-\alpha}e^{-x} \, dx \right)  z^\alpha + \underset{z \to 0}{o}(z^\alpha) = 1 - \frac C\alpha \Gamma(1 - \alpha)  z^\alpha + \underset{z \to 0}{o}(z^\alpha).\]
		Indeed, one could alternatively apply \cite[Corollary 8.1.7, p.~334]{BiGoTe1989} to establish that \(1 - L_{\varphi}(z) \underset{z \to 0^+}{\rightarrow} \Gamma(1 - \alpha) \, \Phi\left(\tfrac{1}{z}\right).\)
		Furthermore, under Assumption~\ref{assum:Lhawkes}~(2), which equivalently asserts that $
		\Phi\left(\tfrac{1}{z}\right) \underset{ z \to 0^+}{\sim} \frac{C}{\alpha} z^{\alpha},$
		we deduce the asymptotic expansion:
		\(1 - L_{\varphi}(z) = \frac{C}{\alpha} \, \Gamma(1 - \alpha) \, z^{\alpha} + o(z^{\alpha}) \quad \text{as } z \to 0.\)
		From this, we easily deduce that for \( z > 0 \),
		\begin{equation}
			\mathcal{L}\left(\int_0^{t} T(1 - a_T) R^{\prime T}_{-1}(Tu) \, du \right)(z)  \underset{T \to +\infty}{=} \frac{1}{z} \frac{ a_T }{1
				+ a_T \frac{\frac{C \Gamma(1-\alpha)}{\alpha}}{T^\alpha (1 - a_T)}z^\alpha }
		\end{equation}
		Hence, Owing to Assumption \ref{assum:Lhawkes} (3), \( \mathcal{L}\left(\int_0^{t} T(1 - a_T) R^{\prime T}_{-1}(Tu) \, du \right)(z)  \underset{T \to +\infty}{\to} \frac{1}{z} \frac{\lambda}{\lambda + z^\alpha} =\frac{1}{z} L_{f_{\alpha, \lambda}}(z),\)
		where one-to-one correspondence between functions and their Laplace transforms, together with equation ~\eqref{eq:lapl_resolvent} of Example~\ref{Ex:frackernel} (which give the Laplace transform of the Mittag-Leffler density function \( f_{\alpha, \lambda} \)) yields the last equality.
		
		\medskip
		\noindent {\sc Step~2} {\em (The convergence of the finite measure with density \(T(1 - a_T) R^{\prime T}_{-1}(T\cdot)\)).} 
		By the properties of Laplace Transform, this can be rewritten as follows:
		\begin{equation}
			\mathcal{L}\left(\int_0^{t} T(1 - a_T) R^{\prime T}_{-1}(Tu) \, du \right)(z)  \underset{T \to +\infty}{\to} \frac{1}{z} L_{f_{\alpha, \lambda}}(z) = \mathcal{L}\left(\int_0^{t} f_{\alpha, \lambda} (s) \, ds \right)(z) ,
		\end{equation}
		Thus, still by the pointwise convergence of the Laplace transform yields the weak convergence of the finite measure with density function \(T(1 - a_T) R^{\prime T}_{-1}(T\cdot)\) to the finite measure
		with density function $f_{\alpha, \lambda}$.
		
		\noindent The last claim of the Proposition is deduced easily by the injectivity of the Laplace transform and the dominated convergence theorem. \hfill$\Box$
		%\qed
		
		\medskip	
		\noindent {\bf Proof of Proposition~\ref{prop:stability}:}
		For any \( T \geq 0 \), we can write
		\begin{align*}
			\int_0^{t_0} \left| G(t) - \int_0^t g(t - s) m^*(ds)  \right| dt 
			&\leq \int_0^{t_0} |G(t) - G^T(t)| \, dt  + \int_0^{t_0} \left| \int_0^{t} (g^T(t - s) - g(t - s)) \, m^{T}(ds) \right| dt \\
			&\quad + \int_0^{t_0} \left| \int_0^{t} g(t - s) \, m^{T}(ds) - \int_0^t g(t - s) \, m^{*}(ds) \right| dt.
		\end{align*}
		The first term vanishes as \( T \to \infty \) by assumption. For the second term, using regular Fubini theorem:
		\begin{align*}
			&\	\int_0^{t_0} \left| \int_0^{t} (g^T(t - s) - g(t - s)) \, m^{T}(ds) \right| dt 
			= \int_0^{t_0} \int_0^{t_0} \mathbf{1}_{\{s \leq t\}} |g^T(t - s) - g(t - s)| dt \,m^{T}(ds) \\
			&\hspace{.5cm}= \int_0^{t_0} m^{T}(ds) \int_s^{t_0} |g^T(t - s) - g(t - s)| dt = \int_0^{t_0} m^{T}(ds) \int_0^{t_0 - s} |g^T(u) - g(u)| du \\
			&\hspace{7cm}\leq \|g^T - g\|_{L^1([0, t_0])} \cdot \int_0^{t_0} m^{T}(ds),
		\end{align*}
		which goes to 0 as \( T \to \infty \), by the \( L^1 \)-convergence of \( g^T \to g \) and integrability of \( m^{T} \to m^{*} \). For the third and last term, by the first claim of Corollary \ref{Corol:LimitMeasure} below, we deduce its convergence to 0 as \( T \to \infty \).
		Finally, \(\int_0^{t_0} \left| G(t) - \int_0^t g(t - s) m^*(ds)  \right| dt = 0,\) and hence \(G(t) = \int_0^t g(t - s)  m^*(ds)  \quad \text{for } t \leq t_0, \text{ by the continuity of } G \text{ and } g.\)  \hfill$\Box$
		
		\medskip		
		\noindent {\bf Proof of Corollary \ref{Corol:LimitMeasure}:}
		\noindent  {\sc Step~1} {\em (The convergence of the equation with \(\theta\) ):} Owing to the commutative property of convolution, we have $\int_0^{t} T(1 - a_T) R^{\prime T}_{-1}(T(t-s))\theta(s) \, ds = \int_0^{t} T(1 - a_T) R^{\prime T}_{-1}(Ts)\theta(t-s) \, ds, $ and
		since \( \theta \) is continuous, we deduce that as \(T \to +\infty\):
		\[
		\int_0^t T(1 - a_T) R^{\prime T}_{-1}(Ts) \theta(t - s) \, ds = \int_0^t \theta(t - s) \, m^{T}(ds) \xrightarrow{\text{a.s.}} \int_0^t \theta(t - s) \, m^*(ds) = \int_0^t  f_{\alpha,\lambda}(s) \theta(t - s) \, ds.
		\]
		More precisely, the proof consists in bounding the difference between those integrals, which vanishes in the limit due to the continuity of \( \theta \) and integrability of \( f^{\alpha,\lambda} \).
		This convergence implies that  almost surely,
		\[\sup_{t\geq0}|\int_0^{t} T(1 - a_T) R^{\prime T}_{-1}(T(t-s))\theta(s) \, ds- \int_0^t f_{\alpha,\lambda}(t-s)\theta(s)ds|\rightarrow 0 \; \text{as}\;T \to +\infty. \]
		Consequently, and in particular, we deduce the weak convergence in the space \( \mathcal{C}([0,t_0]; \mathbb{R}) \).
		
		\medskip
		\noindent {\sc Step~2} {\em (The convergence of the equation with \(\mathit{\Phi}\) ).} 
		we have using Fubini's Lemma in the third equality:
		{\small
		\begin{align*}
			&\,(R^{\prime T}_{-1}*\mathit{\Phi}^T)(t) = \int_0^t \left(\int_{t-s}^{+\infty} a_T \varphi^T(r) dr\right) R^{\prime T}_{-1}(s) ds = \int_0^t a_T \left(1 - \int_0^{t-s} \varphi(r) dr\right) R^{\prime T}_{-1}(s) ds= a_T \int_0^t R^{\prime T}_{-1}(s) ds \\ 
			&- \int_0^t \int_0^{t-s} \varphi^T(t-s-r) dr R^{\prime T}_{-1}(s) ds = a_T \int_0^t R^{\prime T}_{-1}(s) ds - \int_0^t \left(\int_0^{t-r} \varphi^T(t-s-r) R^{\prime T}_{-1}(s) ds\right) dr\\
			&= a_T \int_0^t R^{\prime T}_{-1}(s) ds - \int_0^t \big(R^{\prime T}_{-1}(t-r)-\varphi^T(t-r)\big) dr  
			= (a_T - 1) \int_0^t R^{\prime T}_{-1}(s) ds - \int_0^t \varphi^T(s) ds.
		\end{align*}
	    }
		Replacing back in the original equation, it turns out that:
		\begin{align*}
			\mathit{\Phi}^T(t) + (R^{\prime T}_{-1}*\mathit{\Phi}^T)(t) &= \int_t^{+\infty} \varphi^T ds + (a_T - 1) \int_0^t R^{\prime T}_{-1}(s) ds - \int_0^t \varphi^T(s) ds\\ 
			&= a_T \int_0^{+\infty} \varphi ds + (a_T - 1) \int_0^t R^{\prime T}_{-1}(s) ds= a_T + (a_T - 1) \int_0^t R^{\prime T}_{-1}(s) ds.
		\end{align*}
		so that
		$\mathit{\Phi}^T(Tt) + (R^{\prime T}_{-1}*\mathit{\Phi}^T)(Tt) = a_T - (1-a_T) \int_0^{Tt} R^{\prime T}_{-1}(s) ds = a_T - \int_0^{t} T (1-a_T) R^{\prime T}_{-1}(Ts) ds$, then:
		{\small 
			\begin{align*}
				T^\alpha \frac{ (1 - a_T)}{\tilde{\mu}^T \left(\frac{t}{T}\right)}
				\Big( \mathit{\Phi}^T(tT) + (R^{\prime T}_{-1} * \mathit{\Phi}^T)(tT) \Big) 
				&= T^\alpha \frac{ a_T (1 - a_T)}{\tilde{\mu}^T \left( \frac{t}{T} \right)}  - \frac{1}{a_T} \int_0^{t} T (1-a_T) R^{\prime T}_{-1}(T(t-s)) 
				T^\alpha \frac{ a_T (1 - a_T)}{\tilde{\mu}^T \left( \frac{s}{T} \right)} ds \\
				&+ T^\alpha(1 - a_T) \int_0^{t} T (1-a_T) R^{\prime T}_{-1}(T(t-s)) 
				\left( \frac{1}{\tilde{\mu}^T \left( \frac{s}{T} \right)} 
				- \frac{1}{\tilde{\mu}^T \left( \frac{t}{T} \right)} \right) ds
			\end{align*}
		} 
		\noindent By assumption, the first term converges to \(\phi\) as \(T \to +\infty\), the second to \(\int_0^t f_{\alpha,\lambda}(t-s)\phi(s)ds\) owing to the stability result in Proposition~\ref{prop:stability} for the density \(
		m^{T}(ds) \) and the third to 0 thanks to the same Lemma. \\
		For a formal proof of the convergence to 0 of the third and last term, the reader is invited to refer to Lemma~\ref{Lm:cvgcezero}.
		This yields finally: 
		\[T^\alpha\frac{ (1 - a_T)}{\tilde{\mu}^T (\frac{t}{T})}\Big(\mathit{\Phi}^T(tT) + (R^{\prime T}_{-1}*\mathit{\Phi}^T)(tT)\Big) \underset{T \to +\infty}{\rightarrow} \phi(t)- \int_0^t f_{\alpha,\lambda}(t-s)\phi(s)ds.\]
		
		\noindent Note that, if \(\phi \equiv 1\), then $\mathit{\Phi}(Tt) + (R^{\prime T}_{-1}*\mathit{\Phi})(Tt) \underset{T \to +\infty}{\rightarrow} 1- \int_0^{t}f_{\alpha, \lambda}(u) \, du = R_{\alpha, \lambda}(t). $
		
		\noindent Finally, the last claim of the corollary follows from the first point of Assumption~\ref{assum:Lhawkes} and the uniform (non-degeneracy) bound on~$\tilde{\mu}$ stated in the remark and discussions following the same assumption.\\
		This complete the proof and we are done.\hfill $\Box$
\end{document}